\documentclass[11pt]{article}

\usepackage[T1]{fontenc}
\usepackage{lmodern}
\usepackage{microtype}
\usepackage[a4paper,margin=29mm]{geometry}
\usepackage{amsmath,amssymb,amsthm,mathtools,mathrsfs}
\usepackage{booktabs,tabularx,longtable,array}
\usepackage{subcaption}
\usepackage[dvipsnames]{xcolor}
\usepackage{enumitem}
\usepackage{graphicx}
\usepackage{tikz-cd}
\usepackage{fancyhdr}
\usepackage{lastpage}
\usepackage{float}
\usepackage{url}
\usepackage{xurl}
\usepackage[hidelinks]{hyperref}
\usepackage{cleveref}
\pdfmapfile{+euler.map}

\definecolor{statusblue}{HTML}{17365D}
\definecolor{statusgreen}{HTML}{1D6F42}
\definecolor{statusamber}{HTML}{8A5B00}
\definecolor{statusred}{HTML}{982B2B}
\definecolor{boxblue}{HTML}{EDF3F8}
\definecolor{boxred}{HTML}{F9EEEE}

\newcommand{\Z}{\mathbf Z}
\newcommand{\Q}{\mathbf Q}
\newcommand{\F}{\mathbf F}
\newcommand{\C}{\mathbf C}

\newcommand{\cO}{\mathcal O}

\newcommand{\Hom}{\operatorname{Hom}}
\newcommand{\End}{\operatorname{End}}
\newcommand{\im}{\operatorname{im}}

\theoremstyle{plain}
\newtheorem{theorem}{Theorem}[section]
\newtheorem{proposition}[theorem]{Proposition}
\newtheorem{lemma}[theorem]{Lemma}
\newtheorem{corollary}[theorem]{Corollary}
\theoremstyle{definition}
\newtheorem{definition}[theorem]{Definition}

\newtheorem{conjecture}[theorem]{Conjecture}

\theoremstyle{remark}
\newtheorem{remark}[theorem]{Remark}

\numberwithin{equation}{section}

\setlist[itemize]{leftmargin=6mm}
\setlist[enumerate]{leftmargin=7mm}
\allowdisplaybreaks

\title{\bfseries Prime-power congruences for level-one eigenforms}
\author{Nadim Rustom\thanks{%
  \begin{tabular}[t]{@{}l@{}}
    \href{mailto:rustom.nadim@proton.me}{\texttt{rustom.nadim@proton.me}}\\
    Independent Researcher
  \end{tabular}}}

\begin{document}
\maketitle

\begin{abstract} We study prime-power congruences for the prime-to-\(p\) Hecke eigensystems of normalized cuspidal level-one eigenforms as the weight varies, and we address the problem of classifying such systems for small prime powers. We construct Hecke-equivariant transfer maps between level-one modular-symbol modules via multiplication by suitable lifts of Dickson invariants; these maps reduce the problem to finite computation and propagate Hecke operator relations through all weights. For \(p=2,3,5,7\), we obtain all-weight classifications modulo \(2^8\), \(3^4\), \(5^3\), and \(7^2\), respectively.\end{abstract}

\tableofcontents

\part{General theory}

\section{Introduction and main results}

\subsection{The problem}

In this work, we study congruences modulo prime powers between systems of Hecke eigenvalues arising from normalized cuspidal eigenforms of level \(1\). We will use the adjective \emph{strong}, following \cite{CKW2013,KRW2016}, to refer to either eigenforms in characteristic $0$, or Hecke eigensystems arising from such eigenforms. More precisely, for a fixed prime power \(p^m\), we ask which strong prime-to-\(p\) Hecke eigensystems can occur in level $1$ as the weight varies, and we seek an explicit classification of these systems.

Questions of this kind belong to the general study of modular forms and Galois representations modulo prime powers. For fixed level \(N\), prime \(p\nmid N\), and integer \(m\geq1\), the author, in joint work with Kiming and Wiese, formulated the following finiteness conjecture \cite[Conjecture~1]{KRW2016}.

\begin{conjecture} \label{conj:krw-finiteness} There are only finitely many modulo \(p^m\) congruence classes of strong Hecke eigenforms on \(\Gamma_1(N)\). \end{conjecture}

Equivalently, there should exist a constant \(B(N,p,m) \), called a \emph{strong weight
bound}, such that every normalized characteristic-zero eigenform \(f\) on \(\Gamma_1(N)\) is congruent modulo \(p^m\) to a normalized eigenform \(g\) on \(\Gamma_1(N)\) of weight at most \(B(N,p,m)\). A corresponding finiteness statement in characteristic \(p\) is known: for fixed level, only finitely many systems of Hecke eigenvalues with values in \(\overline{\mathbf F}_p\) occur as the weight varies \cite{Jochnowitz1982}. 

The case of level \(1\) already exhibits much of the subtlety of the general problem. Classical results of Hatada \cite{Hatada1977a,Hatada1977b,Hatada1979,Hatada1981} established a series of striking congruences for Hecke eigenvalues. However, these results generally concern individual Hecke eigenvalues and do not determine whole eigensystems. We record a few representative examples. For every normalized level-one eigenform \(f\), Hatada proved, among other congruences, \[ a_\ell(f)\equiv 1+\ell \pmod{2^4\overline{\mathbf Z}_2} \qquad \bigl(\ell\equiv\pm1\pmod{2^3}\bigr), \] and \[ a_\ell(f)\equiv 1+\ell \pmod{2^5\overline{\mathbf Z}_2} \qquad \bigl(\ell\equiv\pm1\pmod{2^4}\bigr) \] \cite{Hatada1979,Hatada1981}. 

Coleman and Stein \cite[Conjecture~3.4]{ColemanStein2004} conjectured that exactly five systems of Hecke eigenvalues modulo \(3^2\) occur in levels \(1\), \(3\), and \(9\), all first occurring at level \(1\), and gave computational evidence for this conjecture. 

Calegari and Emerton subsequently introduced a cohomological approach which allowed them to reprove and extend several of Hatada's congruences \cite[Theorem~2.2 and Lemmas~3.1--3.3]{CalegariEmerton2004}. 

For \(p=7\), Hatada had recorded a conjecture of Serre predicting \begin{equation}\label{conj:serre-conjecture} a_\ell(f)\equiv1+\ell \pmod{7\overline{\mathbf Z}_7} \qquad \bigl(\ell\equiv\pm1\pmod7\bigr). \end{equation} Calegari and Emerton proved the weaker congruence \[ a_\ell(f)\equiv1+\ell \pmod{7^{1/2}\overline{\mathbf Z}_7} \qquad \bigl(\ell\equiv\pm1\pmod7\bigr) \] \cite[Lemma~3.2]{CalegariEmerton2004}. They also proved, for \(p=3\), \[ a_\ell(f)\equiv1+\ell \pmod{3^2\overline{\mathbf Z}_3} \qquad \bigl(\ell\equiv\pm1\pmod{3^2}\bigr), \] for every normalized level-one eigenform \(f\) \cite[Lemma~3.3]{CalegariEmerton2004}. 

In \cite{Rustom2019}, the author obtained a stronger result at level \(1\). For \(p=2\), all systems of Hecke eigenvalues modulo \(2^7\) were determined; in particular, for every normalized level-one eigenform \(f\) of weight \(k\) and every odd prime \(\ell\), \[ a_\ell(f)\equiv 1+\ell^{k-1} \pmod{2^7\overline{\mathbf Z}_2}. \] This gave further evidence for Conjecture~\ref{conj:krw-finiteness}. 

There is also a methodological continuity between these works. Calegari and Emerton point out that modular symbols are particularly well adapted to proving congruences that are independent of the weight, and interpret Hatada's calculations in this framework \cite{CalegariEmerton2004}. Their approach gives a cohomological formulation of the same general idea. The method of \cite{Rustom2019} was also in this spirit, using Merel's explicit theory of modular symbols to obtain finite computations from which congruences could be deduced. Related use of Dickson invariants in the study of torsion in the cohomology of \(\operatorname{SL}_2(\mathbf Z)\) appears in work of Deng \cite{Deng2019}, who also derives congruences between individual level-one cuspforms and Eisenstein series. His general congruence result is modulo a prime, with some higher prime-power congruences treated in specific examples.

The present work continues this approach and develops it further. In particular, we are able to prove the level \(1\) cases of Serre's conjecture (\ref{conj:serre-conjecture}) and Coleman-Stein's conjecture \cite[Conjecture~3.4]{ColemanStein2004}. The main new ingredient is a family of Hecke-equivariant transfer maps entirely within level \(1\), obtained by multiplication by suitable lifts of Dickson invariants. These maps propagate relations between level-one modular-symbol modules of different degrees and thereby reduce all-weight congruence questions to finite computation. In this way, we first obtain congruences for selected Hecke eigenvalues. Then we show that the Hecke operators to which these eigenvalues correspond generate the completed $p$-adic Hecke algebra, so that these congruences are enough to control every other Hecke eigenvalue. We do this by lifting Medvedovsky's \cite{Medvedovsky2015} explicitly determined generators of the corresponding mod $p$ Hecke algebras.

In this sense, the methods used here may be viewed as a more systematic development of the ideas introduced in \cite{Rustom2019}. We do not claim that the results obtained here are the limits of what one could prove using this method: the necessary computations were run on the author's modest laptop, and we believe that more can be achieved on a more powerful machine. 

In the GitHub repository \cite{RustomHeckeCongruences}, we provide Nim code interfacing with FLINT and PARI/GP to perform and verify the computations underlying the results of this paper. The code constructs the required modular-symbol modules, computes the Hecke actions, and verifies the prescribed Hecke-operator relations. We chose Nim for its speed and readability, particularly for readers familiar with Python. For each main theorem, we provide a Jupyter notebook presenting the verification, which can be run with a SageMath kernel.

In the next few introductory subsections, we fix notation and state the main results of the paper.

\paragraph{Acknowledgements and AI use disclosure.} The core of this work was developed by the author while employed as Assistant Professor at Ko\c{c} University, Istanbul, during 2018--2020. In particular, the level $1$ Coleman-Stein conjecture, and the level $1$ Serre conjecture modulo $7^{3/4}\bar \Z_7$, were obtained at that time.

AI tools, in particular Codex/GPT-5.6 Sol, were used to extend these initial results. AI contributions consisted mainly in the identification of the conjectural classifications and weight bounds in each case. AI also contributed significantly to the search for the modular-symbol Hecke operator identities used in Part~II to obtain congruences for selected Hecke eigenvalues. As the working modulus increases, these identities become increasingly complicated, and AI was useful for rapidly testing, rejecting, and identifying plausible identities. 

All AI-generated code was reviewed, and all computational results obtained were subsequently verified independently by the author. AI was also used for proofreading the manuscript, correcting typographical errors, and identifying notational inconsistencies. AI was not involved the development of the theoretical ideas in this work. The author takes full responsibility for all mathematical statements, proofs, computations, and references in the final manuscript.

\subsection{Congruence conventions}

Let \(p\) be a prime number, and fix once and for all algebraic closures \(\overline{\mathbf Q}\), with ring of integers \(\overline{\mathbf Z}\), and \(\overline{\mathbf Q}_p\), with ring of integers \(\overline{\mathbf Z}_p\), together with an embedding \[ \overline{\mathbf Q}\hookrightarrow\overline{\mathbf Q}_p. \] Let \(v_p\) denote the normalized valuation on \(\overline{\mathbf Q}_p\), so that \(v_p(p)=1\). 

We first clarify what we mean by congruence modulo \(p^m\), since two different notions arise when the coefficient field is ramified. 

\begin{definition} \label{def:conventions} Let \(\cO\) be the ring of integers of a finite extension of \(\Q_p\), let \(\pi\) be a uniformizer, and let \(e_{\cO}\) be the ramification index, so that \( p\cO=\pi^{e_{\cO}}\cO. \) For \(m\geq1\) and \(x,y\in\cO\), define \[ \begin{aligned} x\equiv_{\mathrm{val}} y\pmod{p^m} &\quad\Longleftrightarrow\quad v_p(x-y)>m-1,\\ x\equiv_{\mathrm{lit}} y\pmod{p^m} &\quad\Longleftrightarrow\quad x-y\in p^m\cO. \end{aligned} \] We call the first a \emph{valuative congruence modulo \(p^m\)} and the second a \emph{literal congruence modulo \(p^m\)}. The valuative convention is the one used in \cite[\S1.1, pp.~479--480]{KRW2016}. 
\end{definition} 

Equivalently, valuative congruence modulo \(p^m\) is equality in \(\cO/\pi^{e_{\cO}(m-1)+1}\cO, \) whereas literal congruence modulo \(p^m\) is equality in \( \cO/p^m\cO = \cO/\pi^{me_{\cO}}\cO. \) If the coefficient field is unramified, the two notions agree. If \(e_{\cO}>1\), literal congruence is stronger. More generally, literal congruence modulo \(p^m\) implies valuative congruence modulo \(p^m\), while valuative congruence modulo \(p^{m+1}\) implies literal congruence modulo \(p^m\). Indeed, \[ \pi^{e_{\cO}m+1}\cO \subseteq \pi^{me_{\cO}}\cO \subseteq \pi^{e_{\cO}(m-1)+1}\cO. \]

For modular forms, congruence is understood coefficientwise. Thus, if \[ f=\sum_{n\geq0}a_n(f)q^n, \qquad g=\sum_{n\geq0}a_n(g)q^n \] have coefficients in \(\cO\), we write \[ f\equiv g\pmod{p^m} \] valuatively, respectively literally, if \[ a_n(f)\equiv a_n(g)\pmod{p^m} \qquad(n\geq0) \] in the valuative, respectively literal, sense. The Fourier coefficients of a normalized Hecke eigenform \(f\) (normalized meaning that \(a_1(f)=1\)) are algebraic integers; hence, under our fixed embedding \[ \overline{\mathbf Q}\hookrightarrow\overline{\mathbf Q}_p, \] they belong to \(\overline{\mathbf Z}_p\). If \(f\) and \(g\) are normalized Hecke eigenforms, we say that their \emph{prime-to-\(p\) Hecke eigensystems} are congruent modulo \(p^m\), valuatively or literally, if \[ a_n(f)\equiv a_n(g)\pmod{p^m} \qquad((n,p)=1) \] in the corresponding sense. Such congruences may be checked in the ring of integers of any finite extension of \(\mathbf Q_p\) containing the coefficients of both forms.

\subsection{Cyclotomic Hecke eigensystems}

Let \(\varphi\) denote the Euler totient function, and put \(\theta:=q\frac{d}{dq}\). For even \(\kappa\geq4\), let \[ G_\kappa:=-\frac{B_\kappa}{2\kappa} +\sum_{n\geq1}\sigma_{\kappa-1}(n)q^n, \qquad \sigma_{\kappa-1}(n):=\sum_{d\mid n}d^{\kappa-1}, \] be the Eisenstein series normalized at the first degree term. For \(i\geq1\), the constant term is killed by \(\theta^i\), and \[ \theta^iG_\kappa =\sum_{n\geq1}n^i\sigma_{\kappa-1}(n)q^n \] has integral coefficients.

\begin{definition}[Cyclotomic Hecke eigensystem] \label{def:theta-eis} Let \(f\) be a normalized Hecke eigenform. Its prime-to-\(p\) Hecke eigensystem is called \emph{cyclotomic modulo \(p^m\)} if, for some integer \(i\geq m\) and some even integer \(\kappa\geq4\), \[ a_n(f)\equiv n^i\sigma_{\kappa-1}(n)\pmod{p^m} \qquad ((n,p)=1). \] The congruence may be understood literally or valuatively, as in Definition~\ref{def:conventions}.
\end{definition}

If \(e\geq m\) is divisible by the exponent of \((\mathbf Z/p^m\mathbf Z)^\times\), this condition is equivalent to \[ \theta^e f\equiv\theta^iG_\kappa\pmod{p^m} \] in the same sense. At primes \(\ell\ne p\), it gives \[ a_\ell(f)\equiv\ell^i+\ell^{i+\kappa-1}\pmod{p^m}. \] By symmetry of the divisor sum, the prime-to-\(p\) coefficient system depends only on the unordered pair \(\{i,i+\kappa-1\}\), with exponents taken modulo the exponent of \((\mathbf Z/p^m\mathbf Z)^\times\). We also refer to this system as a cyclotomic packet. 

\subsection{Main results}

We now state the main results of the paper. Throughout this subsection, let \(f\) be a normalized cuspidal level-one eigenform of even weight \(k\), and let \(\mathcal O_f\) be the ring of integers of a finite extension of \(\mathbf Q_p\) containing the coefficients of \(f\). 

\begin{theorem}[Prime-to-\(2\) classification modulo \(256\)] \label{thm:intro-p2} If \[ k-2\equiv2,4,10,12\pmod{16}, \] then \[ \theta^{64}f \equiv_{\mathrm{val}} \theta^{64}G_k\pmod{256}. \] If \[ k-2\equiv0,6,8,14\pmod{16}, \] then exactly one of \[ \theta^{64}f \equiv_{\mathrm{val}} \theta^{64}G_k\pmod{256} \] and \[ \theta^{64}f \equiv_{\mathrm{val}} \theta^{16}G_{k+32}\pmod{256} \] holds.

Writing the selected Eisenstein twist as \(\theta^iG_\kappa\), the unordered pair \[ \{a,b\}:=\{i,i+\kappa-1\}\subset\mathbf Z/64\mathbf Z \] is uniquely determined by \(f\). Its possibilities are:
\[
\begin{array}{c|l}
k\bmod16 & \{a,b\}\\ \hline
0,2,8,10 & \{0,k-1\},\ \{16,k-17\}\\
4,6,12,14 & \{0,k-1\}.
\end{array}
\]

Conversely, for every even residue class \(k_0\bmod64\), each permitted prime-to-\(2\) Hecke eigensystem is realized by a normalized cuspidal level-one eigenform of some weight \(K\equiv k_0\pmod{64}\). Thus there are exactly \(48\) strong prime-to-\(2\) Hecke eigensystems modulo \(256\), in the valuative sense.
\end{theorem}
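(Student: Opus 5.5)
The proof has three parts. First, congruences for selected Hecke eigenvalues are obtained through modular symbols and transfer maps. Second, these are extended to the full prime-to-$2$ eigensystem by generating the Hecke algebra. Third, the converse is a finite search over weights.

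\emph{Selected eigenvalues.} For each even $k$, realize the prime-to-$2$ Hecke eigensystems of level-one cusp forms of weight $k$ inside the Hecke module of level-one modular symbols $\mathrm{Symb}_{k-2}$. Use the Eichler--Shimura relation together with Merel's explicit description over $\Z_{(2)}$. Pick a finite set $S$ of Hecke operators $T_\ell$ ($\ell$ odd), and for each residue class of $k-2 \bmod 16$ write down polynomial identities of the form
\[\prod_{j}\bigl(T_\ell-\ell^{a_j}-\ell^{b_j}\bigr)\equiv 0 \pmod{2^8}\]
on the relevant module, possibly after multiplying by $T$-operators that account for the depth. Check these identities by machine for all weights up to some bound $k_1$. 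The Dickson-invariant transfer maps then propagate them to all weights. Multiplication by a lift $\tilde D$ of a Dickson invariant of degree $d$ (a power of $2$ times small factors) gives Hecke-equivariant maps $\mathrm{Symb}_{n}\to\mathrm{Symb}_{n+d}$. Modulo a suitable power of $2$, the cokernel is controlled by lower degrees, so an identity known in degrees $\le n$ lifts to degree $n+d$ with the Eisenstein exponents shifted by $d$. Since $d\equiv 0\pmod{64}$ for a suitable lift, the shift preserves the residue pattern. Induction on the weight then gives the congruences for all $k$. Translated to eigenvalues, $a_\ell(f)\equiv \ell^{a}+\ell^{b}$ valuatively mod $2^8$ for $\ell\in S$ (and all odd $\ell$ of the chosen shape), with $\{a,b\}$ among the listed pairs.

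\emph{Full eigensystem.} Lift Medvedovsky's generators of the mod-$2$ level-one Hecke algebra (which is generated by $T_3$ alone, up to completion) to show that the operators in $S$ topologically generate the completed $2$-adic Hecke algebra acting on the relevant quotient. Every $a_n(f)$ with $n$ odd is then a $2$-adic limit of polynomials in the $a_\ell(f)$, $\ell\in S$, with coefficients independent of $f$. These polynomials are realized on the cyclotomic candidates $\theta^{i}G_\kappa$, because the Eisenstein classes satisfy the same Hecke relations. Hence agreement at $S$ forces $a_n(f)\equiv n^i\sigma_{\kappa-1}(n)$ for all odd $n$, which is exactly $\theta^{64}f\equiv\theta^{64}G_k$ or $\theta^{16}G_{k+32}$. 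Exclusivity and uniqueness of $\{a,b\}\subset\Z/64$ follow by evaluating at $\ell=3$: the two packets differ there modulo $2^8$. The table follows by reducing $\{i,i+\kappa-1\}$ for the two choices.

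\emph{Converse and count.} For each even $k_0\bmod 64$, exhibit explicit weights $K\equiv k_0$ with an eigenform realizing each permitted packet. This is a finite computation of characteristic polynomials of $T_3$ modulo $2^8$. Counting gives $32$ classes with two packets and $32$ with one. One must then identify coincidences of packets across classes via the symmetry $\{a,b\}$ mod $64$; carrying this out carefully should yield $48$.

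The main obstacle is the first step. One must find identities strong enough modulo $2^8$ and prove that the Dickson transfer is exact enough, with controlled cokernel torsion, that no precision is lost in the induction. The approach is to bound the $2$-adic index of $\tilde D\cdot\mathrm{Symb}_n$ in $\mathrm{Symb}_{n+d}$ and compensate by checking the base identities modulo a higher power of $2$.
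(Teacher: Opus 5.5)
Your overall architecture matches the paper's: certified Hecke relations on level-one modular symbols, propagated by Dickson transfers, followed by completed-Hecke generation and a finite realization search. But the central step fails as you formulate it. An identity $\prod_j(T_3-c_j)\equiv0\pmod{2^8}$ cannot give a valuative congruence modulo $2^8$ in the two-packet classes. There the candidates $c_{3,r}$ and $c_{3,r}+2^7$ differ by $2^7$. If $v:=v_2(a_3(f)-c_{3,r})<7$, both factors have valuation $v$, and you learn only $v\geq4$; for instance $a_3=c_{3,r}+16$ satisfies the identity, since $16\cdot(-112)\equiv0\pmod{256}$. If instead $v\geq7$, you learn nothing further.

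On the lattice you need $T_3\equiv c_{3,r}\pmod{2^7}$ together with $(T_3-c_{3,r})(T_3-c_{3,r}-\varepsilon_r2^7)\equiv0\pmod{2^{15}}$. Even in the one-packet classes the paper certifies only $Z_r^2\equiv0\pmod 2$, which yields a valuative rather than literal congruence, so a literal identity $T_3\equiv c_{3,r}\pmod{2^8}$ should not be expected. Your fallback of checking at higher precision would need $m=15$, with induction base $2^{15}\leq d<5\cdot2^{14}$.

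The missing idea is the calculus of divided linear relations. Working only modulo $2^8$, the paper verifies that $\mathfrak D_{t_3-c_{3,r},\,2^7}$ has full domain on $\mathbb M_d(\mathbf Z/256\mathbf Z)$, and that $Z^2-\varepsilon_rZ$ evaluated on it is $\equiv0\pmod2$. Such presented relations pass through surjective Hecke-linear maps (Proposition~\ref{prop:presented-relation-propagation}). Proposition~\ref{prop:division-retained-precision} then converts them into integrality of $Z_r=(T_3-c_{3,r})/2^7$ on $L_d^+$ and $(Z_r^2-\varepsilon_rZ_r)L_d^+\subseteq2L_d^+$. Hence $z_3(z_3-\varepsilon_r)\in2\mathcal O_f$ for the eigenvalue $z_3$ of $Z_r$.

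Several secondary points also need correcting. First, no cokernel torsion has to be controlled. A single Dickson multiplier is never surjective, but the pair map $(F,G)\mapsto A_8F+B_8G$, with $\deg A_8=256$ and $\deg B_8=384$, is surjective on $V_d(\mathbf Z_2)$ for $d\geq640$, by coprimality of $\bar A_8,\bar B_8$ and Nakayama. It descends to a surjection of Manin quotients over $\mathbf Z/256\mathbf Z$, since $A_8,B_8$ are $\operatorname{SL}_2(\mathbf Z)$-invariant modulo $256$. No precision is lost, and the base is $256\leq d\leq638$.

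Second, the mod-$2$ level-one Hecke algebra is $\mathbf F_2[[T_3,T_5]]$ (Nicolas--Serre), so it is not generated by $T_3$. The completed algebra also needs the weight operator: $\mathbb T^{(2)}_{\mathrm{dc}}=\overline{\mathbf Z_2[T_3,T_5,[5]]}$, as in \eqref{eq:p2-big-generation}. You therefore also need the certified relation $(T_5-c_{5,r})L_d^+\subseteq256L_d^+$. You also need agreement at $[5]$, which comes from Lemma~\ref{lem:weak-cyclotomic}: $\theta^iG_\kappa$ reduces to a weak eigenform of weight $\equiv k\pmod{64}$.

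Finally, your count is wrong. There are $32$ even classes modulo $64$: sixteen carry two packets and sixteen carry one, giving $16\cdot2+16=48$. No identifications across classes occur, because $a_3(f)^2-a_9(f)=3^{k-1}$ and $3$ has order $64$ modulo $256$, so $k\bmod64$ is an invariant of the eigensystem.
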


\begin{theorem}[Prime-to-\(3\) classification modulo \(81\)] \label{thm:intro-p3} There exist integers \(i\geq4\) and even \(\kappa\geq4\), with \(\kappa+2i\equiv k\pmod{54}\), and a modular form \(\delta_f\) over \(\mathbf F_9\), such that \[ \theta^{54}f \equiv_{\mathrm{val}} \theta^iG_\kappa+27\delta_f\pmod{81}. \] In particular, \[ \theta^{54}f\equiv_{\mathrm{lit}}\theta^iG_\kappa\pmod{27}. \] The unordered pair \[ \{a,b\}:=\{i,i+\kappa-1\}\subset\mathbf Z/18\mathbf Z \] is uniquely determined by \(f\) and belongs to the following list:
\[
\begin{array}{c|l}
k\bmod18 & \{a,b\}\\ \hline
0  & \{0,17\},\ \{8,9\}\\
2  & \{0,1\},\ \{9,10\}\\
4  & \{0,3\},\ \{6,15\},\ \{9,12\}\\
6  & \{0,5\},\ \{9,14\}\\
8  & \{0,7\},\ \{9,16\}\\
10 & \{0,9\},\ \{3,6\},\ \{12,15\}\\
12 & \{0,11\},\ \{2,9\}\\
14 & \{0,13\},\ \{4,9\}\\
16 & \{0,15\},\ \{3,12\},\ \{6,9\}.
\end{array}
\]

There exists a unique value \(\alpha \in \Z_3[\sqrt 2]\) listed for \(k\bmod54\) in Table~\ref{tab:p3-signatures} of Appendix~\ref{app:p3-signatures}, such that \[ a_2(f)\equiv_{\mathrm{val}}\alpha\pmod{81}, \qquad a_7(f)\equiv_{\mathrm{val}}1+7^{k-1}\pmod{81}. \] Once \(i,\kappa\) are fixed, the signature \((k\bmod54,\alpha)\) determines the \(q\)-expansion of \(\delta_f\). The correction \(\delta_f\) is defined over \(\mathbf F_3\) when \(\alpha\) is rational, and over \(\mathbf F_9\), but not over \(\mathbf F_3\), otherwise.

Two normalized cuspidal level-one eigenforms have congruent prime-to-\(3\) Hecke eigensystems modulo \(81\), in the valuative sense, if and only if their weights are congruent modulo \(54\) and their corresponding values \(\alpha\) agree. 

Every signature in Table~\ref{tab:p3-signatures} is realized by a normalized cuspidal level-one eigenform of weight at most \(214\) in the corresponding residue class modulo \(54\). Thus there are exactly \(159\) such eigensystems modulo \(81\). Their reductions realize all \(21\) pairs listed above, giving exactly \(21\) strong prime-to-\(3\) Hecke eigensystems modulo \(27\), in the literal sense.
\end{theorem}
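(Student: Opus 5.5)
The proof of Theorem~\ref{thm:intro-p3} splits into an upper-bound half and a lower-bound half: every level-one eigensystem mod $81$ is one of the listed signatures, and every signature occurs. The overall scheme follows the abstract. First I would prove congruences for the selected eigenvalues $a_2$ and $a_7$, uniformly in the weight, using the Dickson-invariant transfer maps. Then I would show that $T_2$ and $T_7$ topologically generate the relevant completed $3$-adic Hecke algebra, so that these congruences control the whole prime-to-$3$ eigensystem.

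\textbf{Step 1 (Hecke relations in all weights).} Work with the level-one modular-symbol modules $\mathrm{Sym}^{k-2}(\Z_3^2)$ (or their $\Gamma$-coinvariant/cohomology versions). The transfer maps are multiplication by a lift of a Dickson invariant; for $p=3$ the natural choice has degree $p^2-p=6$ or $p^2-1=8$. They are Hecke-equivariant and map degree $k-2$ to a higher degree, and modulo suitable powers of $3$ they become isomorphisms on the relevant quotients.

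Using them, I would propagate finitely many verified operator identities to all weights congruent modulo $54$. The identities to propagate are:
\begin{itemize}
\item $P_{k \bmod 54}(T_2)\equiv 0$ modulo $81$, up to the $27\delta$ defect, where $P$ is the product of $(X-\alpha)$ over the $\alpha$ listed in Table~\ref{tab:p3-signatures}; and
\item $T_7 \equiv 1+7^{k-1}$.
\end{itemize}
These are checked on the finitely many base weights by the code in \cite{RustomHeckeCongruences}. Since the transfers are equivariant, any annihilating polynomial identity valid in a base weight persists in all higher weights of the same class. This yields the statements that $a_2(f)\equiv_{\mathrm{val}}\alpha$ for some listed $\alpha$ and $a_7(f)\equiv_{\mathrm{val}} 1+7^{k-1}$ modulo $81$.

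\textbf{Step 2 (generation).} Medvedovsky \cite{Medvedovsky2015} gives explicit generators of the mod-$3$ level-one Hecke algebras, namely $T_2$ and $T_7$ (or $T_2$ together with a suitable $T_\ell$). By Nakayama, lifts of these generators generate the completed algebra $\mathbf T$ over $\Z_3$. Hence every $a_\ell(f)$ modulo $81$ is determined by $(k\bmod 54,\,a_2,\,a_7)$, through a polynomial expression that can be made explicit.

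Comparing with $\theta^iG_\kappa$ then gives the cyclotomic form. The constraint $\kappa+2i\equiv k \pmod{54}$ comes from the determinant, i.e.\ the weight-character, via $\ell^{k-1}\equiv \ell^{i}\ell^{i+\kappa-1}$. The residual discrepancy appears only at level $27$ and is packaged as $27\delta_f$, with $\delta_f$ a mod-$3$ form defined over $\F_3(\alpha)$. The classification by pairs $\{a,b\}\subset\Z/18$ is read off by reducing modulo $27$ literally: valuative congruence mod $81$ implies literal congruence mod $27$ by Definition~\ref{def:conventions}. Uniqueness of the pair follows because distinct pairs give distinct $\ell^a+\ell^b$ mod $27$ for some $\ell$, e.g.\ $\ell=2$ as a generator of $(\Z/27)^\times$.

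\textbf{Step 3 (realization and counting).} Compute eigenforms up to weight $214$ and exhibit a form for each signature, totalling $159$. The "if and only if" statement combines Step 2 (the signature determines the system) with the converse that the system determines $k\bmod 54$ and $\alpha$: $a_2$ gives $\alpha$, and $a_7\equiv 1+7^{k-1}$ mod $81$ determines $k-1$ modulo $\mathrm{ord}(7)=27$, while parity fixes $k$ mod $54$.

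The main obstacle is Step 1 at modulus $81$. The transfer maps are only isomorphisms modulo smaller powers of $3$, because of the torsion and Eisenstein contributions, so the base weights must be taken large enough, or a two-stage argument used: exact mod $27$, then control of the $27\delta$ layer. In addition, the needed $T_2$ identities involve the ramified $\sqrt2$ values. I would first try to prove the annihilating-polynomial identity in the quotient by the Eisenstein part, and then handle the ramified $\F_9$ classes separately.
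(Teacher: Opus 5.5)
Your overall architecture matches the paper's: weight-uniform Hecke relations obtained from Dickson transfers, then generation of the completed Hecke algebra, then a finite realization search. Step~1 as you describe it would not work, however, for two separate reasons.

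\textbf{The transfer mechanism is wrong.} Multiplication by \(\alpha_3\) (degree \(6\)) or by \(\beta_3^2\) (degree \(8\)), with \(\beta_3=X^3Y-XY^3\), commutes with \(P\mapsto P\mid\gamma\) only modulo \(3\). For instance, \(\alpha_3\mid U\) has \(X^6\)-coefficient \(4\) rather than \(1\). So these maps give no Hecke-equivariant transfer on \(\mathbb M_d(\Z/81\Z)\).

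The paper uses \(A_m=\alpha_3^{3^{m-1}}\) and \(B_m=\beta_3^{3^{m-1}}\), of degrees \(2\cdot3^m\) and \(4\cdot3^{m-1}\). Modulo \(3^m\) these satisfy \(A_m\mid\gamma=A_m\) and \(B_m\mid\gamma=(\det\gamma)^{3^{m-1}}B_m\). Consequently the \(B_m\)-branch is only twisted-equivariant, with \(T_n\mapsto n^{3^{m-1}}T_n\) (for instance \(T_2\mapsto-T_2\)), and it reverses the \(\iota\)-sign.

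Nor is either multiplication an isomorphism, or even surjective in general, since the modules grow with \(d\). What propagates relations upward is surjectivity of the joint map \(\mathbb M_{d-a_m}^{\epsilon}(R_m)\oplus\mathbb M_{d-b_m}^{-\epsilon}(R_m)\to\mathbb M_d^{\epsilon}(R_m)\) for \(d\ge a_m+b_m\). This follows from the coprimality of \(\bar\alpha_3\) and \(\bar\beta_3\) together with Nakayama. The base relations must therefore be verified on the full Manin quotients, boundary and torsion included rather than modulo an Eisenstein part, in both signs and with the twisted operators. Otherwise the induction does not close.

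\textbf{The relations you propagate are too weak.} An identity \(\prod_{\alpha}(T_2-\alpha)\equiv0\pmod{81}\) only gives \(\sum_\alpha v_3(a_2(f)-\alpha)\ge4\), and the listed \(\alpha\) cluster \(3\)-adically.
\begin{itemize}
\item For \(k\equiv0\pmod{54}\), the value \(a=21\) has valuations \(2,2,1,1\) against \(12,39,42,69\), yet is congruent to none of them modulo \(81\).
\item In the zero branch, every \(a\in9\mathcal O_f\) satisfies the product identity.
\end{itemize}
So no annihilating polynomial modulo \(81\) can give \(a_2(f)\equiv_{\mathrm{val}}\alpha\pmod{81}\). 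Likewise, an operator identity \(T_7\equiv1+7^{k-1}\pmod{81}\) would give a literal congruence, which is stronger than the valuative one the theorem claims.

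The missing idea is divided Hecke relations. Since \(T/3^a\) is not an endomorphism of the finite modules, such relations are encoded as presented linear relations \(Tx=3^ay\). These pass through the surjective transfers, and are then cancelled on the free lattice at a cost of \(a\) digits of precision. The paper uses three of them:
\begin{itemize}
\item For \(a_7\): \(N_r^2\equiv0\pmod3\) with \(N_r=(T_7-t_r)/27\), at precision \(81\). This gives \(v_3(a_7(f)-t_r)\ge7/2\).
\item Nonzero branch: \((Q_r(Q_r-1))^2\equiv0\pmod3\) with \(Q_r=(T_2^2-9c_r)/81\), at precision \(243\). This gives \(z_2^2\equiv c_r\) or \(c_r+9\pmod{27}\), and Hensel's lemma then yields the four values of \(a_2\).
\item Zero branch: \(A=T_2/9\), \(B_r=G_r(A)/9\) and selector relations at precision \(3^7\) on the ideal image \((9,t_2)\mathbb M_d\). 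This is where the values \(9\pm27\omega\) and \(72\pm27\omega\) come from. Note that \(\sqrt2\) is unramified over \(\Q_3\), not ramified.
\end{itemize}

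\textbf{Smaller points.}
\begin{itemize}
\item Generation needs the weight operator \([4]\) in addition to \(T_2\) and \(T_7\). For \(p=3\) the paper cites \cite{Rustom2019} for this, not Medvedovsky.
\item You need Lemma~\ref{lem:weak-cyclotomic} to treat \(\theta^iG_\kappa\) as a character of \(\mathbb T_{\mathrm{dc}}^{(3)}\).
\item You need Lemma~\ref{lem:modular-correction} to show that \(\delta_f\) is modular.
\item Your converse via \(7^{k-1}\), using that \(7\) has order \(27\) modulo \(81\) plus parity, is a valid alternative to the paper's use of \(a_2^2-a_4=2^{k-1}\).
\end{itemize}
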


Theorem~\ref{thm:intro-p3} implies the Coleman-Stein conjecture at level $1$. 

\begin{theorem}[Prime-to-\(5\) classification modulo \(125\)] \label{thm:intro-p5} There exist integers \(i\geq3\) and even \(\kappa\geq4\), with \(\kappa+2i\equiv k\pmod{100}\), and a modular form \(\delta_f\) over \(\mathbf F_5\), such that \[ \theta^{100}f \equiv_{\mathrm{val}} \theta^iG_\kappa+25\delta_f\pmod{125}. \] In particular, \[ \theta^{100}f\equiv_{\mathrm{lit}}\theta^iG_\kappa\pmod{25}. \] The unordered pair \[ \{a,b\}:=\{i,i+\kappa-1\}\subset\mathbf Z/20\mathbf Z \] is uniquely determined by \(f\) and belongs to the following list:
\[
\begin{array}{c|l}
k\bmod20 & \{a,b\}\\ \hline
0  & \{0,19\},\ \{3,16\},\ \{4,15\},\
     \{5,14\},\ \{6,13\},\ \{9,10\}\\
2  & \{0,1\},\ \{4,17\},\ \{5,16\},\
     \{6,15\},\ \{7,14\},\ \{10,11\}\\
4  & \{0,3\},\ \{4,19\},\ \{5,18\},\
     \{8,15\},\ \{9,14\},\ \{10,13\}\\
6  & \{0,5\},\ \{2,3\},\ \{6,19\},\
     \{9,16\},\ \{10,15\},\ \{12,13\}\\
8  & \{0,7\},\ \{1,6\},\ \{2,5\},\
     \{10,17\},\ \{11,16\},\ \{12,15\}\\
10 & \{0,9\},\ \{1,8\},\ \{4,5\},\
     \{10,19\},\ \{11,18\},\ \{14,15\}\\
12 & \{0,11\},\ \{1,10\},\ \{2,9\},\
     \{5,6\},\ \{12,19\},\ \{15,16\}\\
14 & \{0,13\},\ \{3,10\},\ \{4,9\},\
     \{5,8\},\ \{14,19\},\ \{15,18\}\\
16 & \{0,15\},\ \{1,14\},\ \{4,11\},\
     \{5,10\},\ \{7,8\},\ \{17,18\}\\
18 & \{0,17\},\ \{1,16\},\ \{2,15\},\
     \{5,12\},\ \{6,11\},\ \{7,10\}.
\end{array}
\]

Put \(r\equiv k-2\pmod{100}\), with \(0\leq r<100\), and use the constants in Table~\ref{tab:p5-mod125-constants}. There exists a unique pair \[ (a,t)\in\mathbf Z/125\mathbf Z\times\mathbf Z/25\mathbf Z \] satisfying \[ t=v_r\qquad\text{or}\qquad t\equiv v_r\pm2\pmod5, \] and \[ a^2\equiv h_{r,0}+5h_{r,1}t+25h_{r,2}t^2\pmod{125}, \] such that \[ a_2(f)\equiv_{\mathrm{val}}a\pmod{125}, \qquad a_{19}(f)\equiv_{\mathrm{val}}5t\pmod{125}. \] Once \(i,\kappa\) are fixed, the signature \((r,a,5t)\) determines the \(q\)-expansion of \(\delta_f\).

Two normalized cuspidal level-one eigenforms have congruent prime-to-\(5\) Hecke eigensystems modulo \(125\), in the valuative sense, if and only if their weights are congruent modulo \(100\) and their corresponding pairs \((a,t)\) agree. 

Every permitted signature \((r,a,5t)\) is realized by a normalized cuspidal level-one eigenform of weight at most \(598\) in the corresponding residue class modulo \(100\). Thus there are exactly \(1100\) such eigensystems modulo \(125\). Their reductions realize all \(60\) pairs listed above, giving exactly \(60\) strong prime-to-\(5\) Hecke eigensystems modulo \(25\), in the literal sense.
\end{theorem}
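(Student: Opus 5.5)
The plan follows the strategy announced in the introduction, in four stages: realize the Hecke action on level-one modular symbols, transfer relations between weights via Dickson-invariant multiplication, reduce to finitely many computations, and then pass from selected eigenvalues to the full eigensystem.

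\textbf{Setup.} Let \(V_r\) be the space of homogeneous polynomials of degree \(r=k-2\) over \(\Z_5\). I would realize the weight-\(k\) cuspidal Hecke module, up to Eisenstein and boundary pieces, inside the level-one modular symbols \(M_r:=H^1(\operatorname{SL}_2(\Z),V_r)\). Merel's description makes the action of \(T_2\), \(T_{19}\) and the other \(T_\ell\) explicit. Then \(a_n(f)\) is an eigenvalue of the corresponding Hecke operator on \(M_r\otimes\mathcal O_f\).

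\textbf{Transfer maps.} Let \(D\) be a lift to characteristic zero of the Dickson invariant \(X^5Y-XY^5\) (and its powers), of degree \(6\) or suitable multiples. Multiplication by \(D\) should give a map \(M_r/125\to M_{r+d}/125\) that is Hecke-equivariant up to the twist \(\ell\mapsto\ell^{\deg}\) in the \(\theta\)-sense. Choosing \(d\) a multiple of \(100\), with the lift congruent modulo \(125\) to an invariant, I would show that every relation among Hecke operators on \(M_r\) for \(r\) in a fixed class modulo \(100\) propagates to all larger \(r\) in that class. This holds once the relation is verified for finitely many \(r\) large enough that the image of the transfer map, together with a controlled cokernel, covers \(M_{r+d}\). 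The consequence is that the identities needed below are finite computations in weights up to roughly \(598\).

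\textbf{Stages of the argument.}
\begin{enumerate}
\item Prove, by this propagation, that \(\prod_{\{a,b\}}\bigl(T_\ell-\ell^{a}-\ell^{b}\bigr)\) kills \(M_r\) modulo \(25\) for a generating set of primes \(\ell\), where \(\{a,b\}\) runs over the pairs in the table for \(r+2\bmod 20\). Together with Medvedovsky's generators of the mod-\(5\) Hecke algebra, lifted to the completed \(\Z_5\)-Hecke algebra, this yields the cyclotomic form modulo \(25\). It also yields the shape \(\theta^{100}f\equiv\theta^iG_\kappa+25\delta_f\) modulo \(125\), with \(\kappa+2i\equiv k\pmod{100}\). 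Uniqueness of \(\{a,b\}\) follows because distinct pairs give distinct characters modulo \(25\), as one checks at \(\ell=2\).
\item Modulo \(125\), find polynomial identities in \(T_2\) and \(T_{19}\) on \(M_r/125\). These are: a quadratic relation \(T_2^2\equiv h_{r,0}+h_{r,1}T_{19}+h_{r,2}T_{19}^2/5\)-type, normalized so that \(a_{19}=5t\); the constraint on \(t\bmod 5\); and expressions for every other \(T_n\), with \((n,5)=1\), as polynomials in \(T_2\) and \(T_{19}\) modulo \(125\), or at least for generators of the completed Hecke algebra modulo \(125\). Each identity is propagated through all weights as above. These identities give the constraints on \((a,t)\) and show that the signature determines the eigensystem, hence \(\delta_f\). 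The converse direction of the ``if and only if'' is trivial, since \(a_2\) and \(a_{19}\) are part of the eigensystem and the weight modulo \(100\) is detected by \(\ell^{k-1}\) terms modulo \(125\).
\item Handle uniqueness and realization by direct enumeration. Compute eigenforms in each residue class modulo \(100\) up to weight \(598\), check that every permitted \((r,a,5t)\) occurs, and count to obtain \(1100\) and \(60\).
\end{enumerate}

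\textbf{Main obstacle.} The hardest step is the second: the transfer must be Hecke-equivariant exactly modulo \(125\), not just modulo \(5\), while the cokernel and torsion in \(M_{r+d}\) must stay controlled. I would try first a lift \(D\) chosen so that \(D^{25}\) is \(\Gamma\)-invariant modulo \(125\), and check surjectivity onto a Hecke-stable complement of the Eisenstein part numerically. A second difficulty is locating the correct Hecke identities. I would find them by computing the Hecke algebra modulo \(125\) in many weights and fitting relations, then proving them through the propagation argument.
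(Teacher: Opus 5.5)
You have the right overall shape (modular symbols, Dickson multipliers, finite verification, generation of the completed Hecke algebra, enumeration up to weight $598$). However, the two steps that carry the proof would not work as you describe them.

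\textbf{Transfer.} Take a single multiplier $D$: a power of $X^5Y-XY^5$, even one invariant modulo $125$ with $\deg D\equiv0\pmod{100}$. Multiplication by $D$ embeds $V_{d-\deg D}$ into $V_d$ with cokernel of rank $\deg D$. The induced map on modular symbols is not surjective, already rationally, since $\operatorname{rank}\mathbb M_d$ grows like $d/6$. Its cokernel, the modular symbols with coefficients in $V_d/DV_{d-\deg D}$, is exactly your ``controlled cokernel'', and nothing in your plan controls it.

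The missing idea is to use \emph{both} lifted Dickson invariants $A_m=\alpha_5^{5^{m-1}}$ and $B_m=\beta_5^{5^{m-1}}$. Their reductions are coprime, so $V_d=A_mV_{d-a_m}+B_mV_{d-b_m}$ for $d\geq a_m+b_m$. This gives a surjection $\mathbb M_{d-a_m}(R_m)\oplus\mathbb M_{d-b_m}(R_m)\twoheadrightarrow\mathbb M_d(R_m)$. Because $B_m$ transforms by $\det^{5^{m-1}}$ and reverses the $\iota$-sign, propagation is a strong induction over two shifts ($2500$ and $750$ for $m=4$). Each relation must be verified for the twisted actions $t_n=n^{125q}T_n$ on $\mathbb M_d^{(-1)^q}$ in all four orientations, with constants indexed by $d+50q$. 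You cannot stay inside one untwisted residue class modulo $100$.

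\textbf{Precision and divided operators.} Ordinary identities on $M_r/125$ cannot produce the constraint on $t$. Write $a_{19}(f)=5z$. A relation $P(T_{19})\equiv0\pmod{125}$ becomes $c_0+5c_1z+25c_2z^2\equiv0\pmod{125}$, which imposes on $z$ either nothing or the condition that $\bar z$ lies in at most two residue classes. For fixed $r$, the realized values of $z$ occupy the three classes $\bar v_r,\bar v_r\pm2$, so every valid relation of this form is vacuous.

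The paper gets the constraint from the divided operators $Z_{19}=T_{19}/5$ and $B_r=N_r(Z_{19})/25$, together with the selectors $E_\ell(Z_{19})$. The two divisions consume three powers of $5$, so the computation runs at precision $625$. Since $T/5^\alpha$ is not an endomorphism of $\mathbb M_d(\mathbf Z/625\mathbf Z)$, the divisions are encoded as presented linear relations $Tx=5^\alpha y$. These pass through the transfer maps and become integral operators on $P_d^+$ only at the end, via Proposition~\ref{prop:division-retained-precision}.

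\textbf{Stage~1.} It fails for a similar reason. The relation $\prod_{\{a,b\}}(T_\ell-\ell^a-\ell^b)\equiv0\pmod{25}$ only bounds the sum of the valuations of six factors. The six pairs collapse to two residual eigensystems (two pairs modulo $4$), so three factors are non-units and none is forced into $25\mathcal O_f$.

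The paper instead derives the literal mod-$25$ statement from the mod-$125$ signatures. These are rational integers, so valuative congruence modulo $125$ implies literal congruence modulo $25$. It matches them with a unique Eisenstein twist at $T_2,T_{19}$. It then concludes using three inputs:
\begin{itemize}
\item the weak realization of $\theta^iG_\kappa$;
\item the generation $\mathbb T^{(5)}_{\mathrm{dc}}=\overline{\mathbf Z_5[T_2,T_{19},[6]]}$, which requires replacing Medvedovsky's $T_{11}$ by $T_2$;
\item the modularity lemma for $\delta_f$.
\end{itemize}
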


\begin{theorem}[Prime-to-\(7\) classification modulo \(49\)] \label{thm:intro-p7} There exist integers \(i\geq2\) and even \(\kappa\geq4\), with \(\kappa+2i\equiv k\pmod{42}\), and a modular form \(\delta_f\) over \(\mathbf F_7\), such that \[ \theta^{42}f \equiv_{\mathrm{val}} \theta^iG_\kappa+7\delta_f\pmod{49}. \] In particular, \[ \theta^{42}f\equiv_{\mathrm{lit}}\theta^iG_\kappa\pmod7. \] The unordered pair \[ \{a,b\}:=\{i,i+\kappa-1\}\subset\mathbf Z/6\mathbf Z \] is uniquely determined by \(f\) and belongs to the following list: 
\[
\begin{array}{c|l}
k\bmod6 & \{a,b\}\\ \hline
0 & \{0,5\},\ \{1,4\},\ \{2,3\}\\
2 & \{0,1\},\ \{2,5\},\ \{3,4\}\\
4 & \{0,3\},\ \{1,2\},\ \{4,5\}.
\end{array}
\]

There exists a unique triple \[ (c,x,y)\in\{0,\ldots,6\}^3 \] listed for \(k\bmod42\) in Table~\ref{tab:p7-signatures} of Appendix~\ref{app:p7-signatures}, such that \[ a_3(f)\equiv_{\mathrm{val}}c+7x\pmod{49}, \qquad a_{29}(f)\equiv_{\mathrm{val}}2+7y\pmod{49}. \] Once \(i,\kappa\) are fixed, the signature \((k\bmod42,c,x,y)\) determines the \(q\)-expansion of \(\delta_f\).

Two normalized cuspidal level-one eigenforms have congruent prime-to-\(7\) Hecke eigensystems modulo \(49\), in the valuative sense, if and only if their weights are congruent modulo \(42\) and their corresponding triples \((c,x,y)\) agree. 

Every signature in Table~\ref{tab:p7-signatures} is realized by a normalized cuspidal level-one eigenform of weight at most \(380\) in the corresponding residue class modulo \(42\). Thus there are exactly \(315\) such eigensystems modulo \(49\). Their reductions realize all nine pairs listed above, giving exactly \(9\) strong prime-to-\(7\) Hecke eigensystems modulo \(7\), in the literal sense.
\end{theorem}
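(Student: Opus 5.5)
The proof of Theorem~\ref{thm:intro-p7} reduces the all-weight statement to a finite computation, with Hecke-equivariant transfer maps carrying relations between weights. Identify the space of level-one cusp forms of weight \(k\) with the Hecke module of level-one modular symbols of degree \(r=k-2\) over \(\mathbf Z_7\), via Eichler--Shimura. Let \(\mathbf T_r\) be the Hecke algebra generated by the \(T_\ell\), \(\ell\ne7\), acting on this module. Fix a lift \(D\) of a Dickson invariant of degree \(42\) for \(\mathrm{GL}_2(\mathbf F_7)\), chosen so that \(D\equiv1\) modulo \(7\) on the relevant orbits. Multiplication by \(D\) then gives maps from degree \(r\) to degree \(r+42\) that commute with every \(T_\ell\) modulo \(49\), up to the twist \(\ell^{42}\equiv1\pmod{49}\), since \(\ell^{42}\equiv 1 \pmod{7^2}\). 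Consequently any Hecke-operator polynomial relation \(P(T_\ell)=0\) that holds modulo \(49\) on the degree-\(r\) module, and whose image under the transfer generates enough, also holds in degree \(r+42\). To start the induction, verify the relations for all \(r\) up to some explicit bound (roughly \(r\le 380\)) by direct computation, as provided in \cite{RustomHeckeCongruences}. This yields, for every weight, polynomial congruences satisfied by \(a_3(f)\) and \(a_{29}(f)\) whose root sets modulo \(49\) are exactly the triples \((c,x,y)\) in Table~\ref{tab:p7-signatures} for \(k\bmod 42\).

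The second step shows that \(T_3\) and \(T_{29}\), together with the weight character (\(\ell^{k-1}\) for \(\ell\ne7\)), topologically generate the completed \(7\)-adic Hecke algebra modulo the relevant ideal. Modulo \(7\), Medvedovsky's explicit generators of the mod-\(7\) Hecke algebras of level one apply. The relevant algebra is local at each of the nine cyclotomic residual systems listed in the table by \(\{a,b\}\subset\mathbf Z/6\). Lift these generators and use Nakayama's lemma: if \(T_3\) and \(T_{29}\) generate the reduction of the maximal ideal modulo its square, they generate modulo \(49\) as well. It follows that every \(a_n(f)\) with \(7\nmid n\), taken modulo \(49\) in the valuative sense, is a fixed polynomial in \((k\bmod42,a_3,a_{29})\). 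This gives the "if" direction of the classification. The "only if" direction is immediate, because \(a_3\), \(a_{29}\), and the weight class modulo \(42\) are themselves recovered from the eigensystem; the weight class is read off from \(a_\ell\) modulo \(49\) for suitable \(\ell\).

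The third step matches each signature with an Eisenstein twist. For each of the finitely many signatures, compute the \(q\)-expansion. Then show that \(\theta^{42}f-\theta^iG_\kappa\) is divisible by \(7\) and equals \(7\delta_f\) with \(\delta_f\) a mod-\(7\) form. Uniqueness of \(\{a,b\}\) follows from linear independence of distinct characters \(\ell\mapsto\ell^a+\ell^b\) modulo \(7\). Realizability comes from exhibiting, for each row of Table~\ref{tab:p7-signatures}, an explicit eigenform of weight at most \(380\) in the correct class. Counting the rows of that table gives \(315\), and reducing modulo \(7\) gives the \(9\) pairs.

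The main obstacle is the first step. One must find Hecke relations of low enough complexity that they can be verified in the base range, and that the transfer map genuinely propagates. Propagation requires the degree-\(r+42\) module to be covered by the image of the transfer from degree \(r\) plus a part handled by an Eisenstein/boundary argument. The first thing to try is to prove surjectivity of multiplication by \(D\) modulo \(7\) onto the appropriate quotient for \(r\) beyond a stabilization bound. After that, search computationally for identities \(\prod(T_3-\alpha)\cdot(\cdots)\equiv0\pmod{49}\) that are stable under \(r\mapsto r+42\).
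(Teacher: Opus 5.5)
Your proposal has two genuine gaps, both in the first step.

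\textbf{The transfer map.} No degree-$42$ lift $D$ of $\bar\alpha_7$ is $\operatorname{SL}_2(\mathbf Z)$-invariant modulo $49$. Invariance under $\bigl(\begin{smallmatrix}1&7\\0&1\end{smallmatrix}\bigr)$ and its transpose forces $\partial_X\bar D=\partial_Y\bar D=0$. Hence $\bar D=Q^7$ with $Q$ an $\operatorname{SL}_2(\mathbf F_7)$-invariant of degree $6$. The smallest positive-degree invariant is $\bar\beta_7$, of degree $8$, so $\bar D=0$. Multiplication by your $D$ is therefore not compatible with the $\operatorname{SL}_2(\mathbf Z)$-action modulo $49$. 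Modulo $7^m$ one must use $A_m=\alpha_7^{7^{m-1}}$ and $B_m=\beta_7^{7^{m-1}}$, of degrees $294,56$ for $m=2$ and $2058,392$ for $m=3$.

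More fundamentally, relations propagate upward only through a \emph{surjection} onto the higher-degree module, and multiplication by a single invariant is never surjective. The free rank of $\mathbb M_d$ grows like $d/6$, so the cokernel of $\mathbb M_r\to\mathbb M_{r+\deg D}$ has rank about $\deg D/6$. That is far more than the rank-one boundary can absorb, and nothing verified in degree $r$ controls it.

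The paper uses both $A_m$ and $B_m$, which are coprime modulo $7$. Then $\Psi_d:\mathbb M_{d-a_m}(R_m)\oplus\mathbb M_{d-b_m}(R_m)\to\mathbb M_d(R_m)$ is onto for $d\ge a_m+b_m$ (Proposition~\ref{prop:surjective-coefficient-transfer}). The price is that $B_m$ is only a $\det^{7^{m-1}}$-semi-invariant. On the second summand $T_n$ is twisted by $\chi_m(n)=n^{7^{m-1}}$, a character of order $6$ (not the trivial $\ell^{42}$), and the $\iota$-sign flips. So the relations must be verified in all six orientations $q$, as families closed under both transitions, on the base $392\le d<2450$ at precision $343$. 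Your ``$r\le380$'' is the realization bound, not an induction base.

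\textbf{The relations themselves.} Identities $\prod(T_3-\alpha)\cdots\equiv0\pmod{49}$ cannot produce Table~\ref{tab:p7-signatures}, even in principle. On a fixed residual branch every $a_\ell$ is a rational residue plus $7$ times an integral digit, e.g.\ $a_3=c+7x$ and $a_{29}=2+7y$. Taylor expansion then shows that any relation $P(T_3,T_{29},\ldots)\equiv0\pmod{49}$, evaluated on an eigenvector, is an affine-linear condition on the digits. Such relations cut out affine subspaces. They can never give the seven non-collinear pairs on the $c=0$ branch, nor the four-point sets on the nonzero branches.

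Nonlinear digit information requires precision $7^3$ together with division. For example, $F_0(7x)/49\equiv x+4x^2\pmod 7$ is visible only if $F_0(T_3)$ is known modulo $343$. This is why the paper works on $\mathbb M_d^{\epsilon_q}(\mathbf Z/343\mathbf Z)$ with $F_j(T_3)/49$, $G_j(T_3,T_{29})$, and the graph relations for $(T_3-c)/7$ and $(T_{29}-2)/7$. These are encoded as presented linear relations $\mathfrak D_{T,S}$, so that division survives the transfer maps on these non-free finite modules. Proposition~\ref{prop:division-retained-precision} then converts them into integral operators on $L_d^+$ satisfying $H(\cdot)\equiv0\pmod 7$ and the selector relations.

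Your second and third steps essentially match the paper. The one addition needed is that the residual values of $T_3$ and $[3]$ separate the nine residual systems, so that generation of each local factor assembles to the whole semilocal Hecke algebra.
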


Theorem~\ref{thm:intro-p7} implies Serre's conjecture at level $1$. 

\section{Modular forms and Hecke algebras}

\subsection{Hecke eigensystems, coefficient quotients, and completed Hecke algebras}
\label{sec:basic}

Denote by \(S_k(\Z) \subset \Z[[q]]\) the lattice generated by the $q$-expansions of level $1$ cuspforms of weight $k$ with coefficients in $\Z$.  Let $K$ be a finite extension of $\Q_p$ with ring of integers $\cO$. We will denote by $S_k(\cO)$ the $\cO$-submodule of $\cO[[q]]$ spanned by the image of $S_k(\Z)$. 

For \(h\geq0\), put \[ S_{\leq h}(\cO):=\sum_{0\leq k\leq h}S_k(\cO),  \qquad  D_{\leq h}(\cO):= \bigl( S_{\leq h}(\cO)\otimes_{\cO} K\bigr)    \cap\cO[[q]]. \] Equivalently, \(D_{\leq h}(\cO)\) is the \(p\)-power saturation of \(S_{\leq h}(\cO)\) inside \(\mathbf \cO[[q]]\). For $B$ a $p$-adically complete and separated local $\cO$-algebra \(B\), put \[ D_{\leq h}(B):=D_{\leq h}(\cO)\otimes_{\mathbf \cO}B \] and denote by $S_k(B)$ the image of $S_k(\cO)$ in $D_{\leq h}(B)$. These spaces inherit the action of the Hecke operators from the spaces of modular forms. 

We recall the definition of strong, weak, and dc-weak eigenform, first introduced in \cite{CKW2013}.

\begin{definition}[Strong, weak, and dc-weak eigenforms]\label{def:strong-weak} Let \(B\) be a finite local \(\mathbf Z_p\)-algebra and let \(f = q + \sum_{n\geq 2}a_n(f)q^n\in B[[q]]\) be a normalized \(q\)-expansion with coefficients in \(B\).
\begin{enumerate}
\item The form \(f\) is \emph{strong} if it is obtained by reducing a
normalized characteristic-zero eigenform through a coefficient-ring map
to \(B\).
\item The form \(f\) is \emph{weak} if \(f\in S_k(B)\) for some \(k\)
and is a simultaneous Hecke eigenform there.
\item The form \(f\) is \emph{dc-weak} if \(f\in D_{\leq h}(B)\) for
some \(h\) and is a simultaneous Hecke eigenform there.
\end{enumerate}
\end{definition}

\begin{definition} \label{def:anemic} 
For each \(h\), define
\begin{equation}
\begin{aligned}
 \mathbb T_{\leq h}^{(p)}(R)
  &:=\operatorname{im}\!\left(
       R[\mathsf T_n:(n,p)=1]\to
      \operatorname{End}_{R}(D_{\leq h}(R))\right).
\end{aligned}
 \label{eq:bounded-anemic}
\end{equation}
If \(h\leq h'\), restriction from \(D_{\leq h'}(R)\) to \(D_{\leq h}(R)\) allows us to define the completed algebra \begin{equation}  \mathbb T_{\mathrm{dc}}^{(p)}(R)    :=\varprojlim_h\mathbb T_{\leq h}^{(p)}(R).  \label{eq:complete-anemic} \end{equation}
\end{definition}

We will usually write $\mathbb T_{\mathrm{dc}}^{(p)}:=\mathbb T_{\mathrm{dc}}^{(p)}(\Z_p)$.

For \(u\in\mathbf Z_p^\times\), Katz's weight action \([u]\) on divided congruences acts on weight \(k\) by \(u^k\); see \cite[(2.4)]{Katz1975a} and \cite[\S X]{Katz1975b}.  The extension to the low-level cases \(N<3\) used here is explained in \cite[Theorem~2.2 and the following paragraph]{Rustom2019}. For odd \(p\), put \(t_\Lambda:=[1+p]\), and for \(p=2\), put \(t_\Lambda:=[5]\).  It can be shown that $t_\Lambda \in \mathbb T_{\mathrm{dc}}^{(p)}(R)$ \cite[Lemma 9]{BellaicheKhare2015}.

Hecke eigensystems away-from-$p$ arising from strong, weak, or dc-weak eigenforms with coefficients in $R$ correspond to continuous $\Z_p$-algebra homomorphisms $\mathbb T_{\mathrm{dc}}^{(p)} \rightarrow R$.

\begin{definition}[Shallow Hecke algebra]
\label{def:shallow}
Let \(M_{\leq h}(\F_p)\) be the space of classical level-one modular
forms modulo \(p\) of weights at most \(h\).  Following the terminology
of \cite[\S1.2]{BellaicheKhare2015}, define the \emph{shallow Hecke
algebra} by
\[
\mathbb T^{sh,(p)}
:=\varprojlim_h\operatorname{im}\!\left(
\F_p[\mathsf T_n:(n,p)=1]
\longrightarrow\End_{\F_p}(M_{\leq h}(\F_p))\right).
\]
For a
residual prime-to-\(p\) eigensystem \(\tau\), its completed local factor is denoted \(\mathbb T^{sh,(p)}_\tau\).
\end{definition}

\subsection{Lifting shallow Hecke generators}

Let \(\mathbf F/\mathbf F_p\) be a finite extension, and put \[\mathbb T_{\mathbf F}^{\mathrm{sh},(p)}:=\mathbb T^{\mathrm{sh},(p)}\otimes_{\mathbf F_p}\mathbf F.\] For an \(\mathbf F\)-valued residual eigensystem
\[\tau:\mathbb T_{\mathbf F}^{\mathrm{sh},(p)}\longrightarrow\mathbf F,\] let \[\mathfrak m_\tau^{\mathrm{sh}}:=\ker(\tau),\] and denote by \(\mathbb T_\tau^{\mathrm{sh},(p)}\) the corresponding completed local factor.

Let \(\mathcal O\) be the ring of integers of a finite extension of \(\mathbf Q_p\) with residue field \(\mathbf F\), and assume that all residual prime-to-\(p\) eigensystems are \(\mathbf F\)-valued. The Hecke algebra \(\mathbb T_{\mathrm{dc}}^{(p)}(\mathcal O)\) is a complete Noetherian semilocal ring, with decomposition \[\mathbb T_{\mathrm{dc}}^{(p)}(\mathcal O)=\prod_\tau\mathbb T_{\mathrm{dc},\tau}^{(p)}(\mathcal O),\] where \(\tau\) runs over the residual prime-to-\(p\) eigensystems. 

\begin{lemma}[Lifting shallow generators]
\label{lem:shallow-lifting}
Assume that \(p\geq5\). Let \[\mathcal H=\{h_1,\ldots,h_s\}\subseteq\mathbb T_{\mathrm{dc}}^{(p)}(\mathcal O),\] and write \(\bar h_i\) for the image of \(h_i\) in \(\mathbb T_{\mathbf F}^{\mathrm{sh},(p)}\). Suppose that:
\begin{enumerate}
\item for every residual eigensystem \(\tau\), the classes of \[
\bar h_i-\tau(\bar h_i),\qquad 1\leq i\leq s,\] span \[\mathfrak m_\tau^{\mathrm{sh}}/(\mathfrak m_\tau^{\mathrm{sh}})^2;\]
\item the map \[\tau\longmapsto\bigl(\tau(\bar h_1),\ldots,\tau(\bar h_s)\bigr)\in\mathbf F^s\] is injective.
\end{enumerate}

Then \[\mathbb T_{\mathrm{dc}}^{(p)}(\mathcal O)=\overline{\mathcal O[t_\Lambda,h_1,\ldots,h_s]}.\]
\end{lemma}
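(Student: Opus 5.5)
The plan is to reduce to the residual (shallow) level and then lift with a topological Nakayama argument over the Iwasawa-type subalgebra generated by \(t_\Lambda\). Put \(\Lambda:=\mathcal O[[t_\Lambda-1]]\), which maps into \(\mathbb T_{\mathrm{dc}}^{(p)}(\mathcal O)\) since \(t_\Lambda\in\mathbb T_{\mathrm{dc}}^{(p)}(\mathcal O)\). Let \(A:=\overline{\mathcal O[t_\Lambda,h_1,\ldots,h_s]}\), a closed \(\Lambda\)-subalgebra. I will show \(A=\mathbb T_{\mathrm{dc}}^{(p)}(\mathcal O)\) in three steps.

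\emph{Step 1: the residual comparison.} I will use the following form of the results of \cite[\S1.2]{BellaicheKhare2015}: \(\mathbb T_{\mathrm{dc}}^{(p)}(\mathcal O)\) is finite over \(\Lambda\), and for \(p\geq5\) the reduction
\[
\mathbb T_{\mathrm{dc}}^{(p)}(\mathcal O)/(\pi,\,t_\Lambda-1)\;\longrightarrow\;\mathbb T_{\mathbf F}^{\mathrm{sh},(p)}
\]
is surjective. The mechanism is as follows. Modulo \(p\), divided congruences reduce to mod \(p\) modular forms. Moreover, \(t_\Lambda=[1+p]\) acts on weight \(k\) by \((1+p)^k\equiv1\pmod p\), so it becomes trivial residually. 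The hypothesis \(p\geq5\) is used here: the weight grading modulo \(p-1\) is then handled by the \(\mathbf F_p\)-structure of \(M_{\leq h}(\mathbf F_p)\), with no extra \(2\)- or \(3\)-adic weight characters. I will use the surjectivity in the form
\[
\mathbb T_{\mathrm{dc}}^{(p)}(\mathcal O)=A+(\pi,\,t_\Lambda-1)\,\mathbb T_{\mathrm{dc}}^{(p)}(\mathcal O),
\]
granted that the images of the \(\bar h_i\) generate \(\mathbb T_{\mathbf F}^{\mathrm{sh},(p)}\). Strictly, this needs only surjectivity onto the shallow algebra together with control of the kernel; I expect the kernel to be contained in \((\pi,t_\Lambda-1)\), and that is the point I would check most carefully.

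\emph{Step 2: the \(\bar h_i\) topologically generate \(\mathbb T_{\mathbf F}^{\mathrm{sh},(p)}\).} There are finitely many residual eigensystems \(\tau\). By hypothesis~(2), for \(\tau\neq\tau'\) some \(i\) has \(\tau(\bar h_i)\neq\tau'(\bar h_i)\). Products of \(\bar h_i-\tau'(\bar h_i)\) then give elements of \(\mathbf F[\bar h_i]\) that are topologically nilpotent at \(\tau'\) and units at \(\tau\). Raising these to suitably large \(p\)-power exponents and taking limits produces the idempotents \(e_\tau\) of the decomposition \(\prod_\tau\mathbb T_\tau^{\mathrm{sh},(p)}\) in the closure of \(\mathbf F[\bar h_i]\). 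Within a fixed local factor \(\mathbb T_\tau^{\mathrm{sh},(p)}\), which is complete local Noetherian, hypothesis~(1) and the complete Nakayama lemma show that the elements \(e_\tau(\bar h_i-\tau(\bar h_i))\) topologically generate the maximal ideal. Hence they generate the local factor as a complete \(\mathbf F\)-algebra.

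\emph{Step 3: lifting.} \(\mathbb T_{\mathrm{dc}}^{(p)}(\mathcal O)\) is a finite, compact \(\Lambda\)-module, and \(A\) is a closed \(\Lambda\)-submodule. Set \(M:=\mathbb T_{\mathrm{dc}}^{(p)}(\mathcal O)/A\). The identity from Step 1 gives \(M=(\pi,t_\Lambda-1)M\), and the topological Nakayama lemma over the local ring \(\Lambda\) forces \(M=0\). Therefore \(A=\mathbb T_{\mathrm{dc}}^{(p)}(\mathcal O)\).

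The main obstacle is Step 1. One must know precisely that the kernel of \(\mathbb T_{\mathrm{dc}}^{(p)}(\mathcal O)\to\mathbb T_{\mathbf F}^{\mathrm{sh},(p)}\) lies in \((\pi,t_\Lambda-1)\), or at least in an ideal \(I\) for which Nakayama still applies, namely one with \(I\cdot\mathbb T_{\mathrm{dc}}^{(p)}(\mathcal O)\subseteq\mathfrak m_\Lambda\mathbb T_{\mathrm{dc}}^{(p)}(\mathcal O)\) up to terms already in \(A\). I would first try to cite the structural theorem of Bellaïche–Khare directly. Failing that, I would fall back on the weaker statement that this kernel is contained in the Jacobson radical of \(\mathbb T_{\mathrm{dc}}^{(p)}(\mathcal O)\). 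Because \(\mathbb T_{\mathrm{dc}}^{(p)}(\mathcal O)\) is finite over \(\Lambda\), that radical is topologically nilpotent modulo \(\mathfrak m_\Lambda\). I would then run Nakayama with the filtration by powers of the radical instead of by powers of \(\mathfrak m_\Lambda\).
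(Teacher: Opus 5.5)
Your proof is correct in outline, but it is organized differently from the paper's.

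\textbf{How the paper argues.} It works one local factor at a time over $\mathcal O$. Three ingredients show that $t_\Lambda,h_1,\ldots,h_s$ topologically generate each factor: the isomorphism $\mathbb T_{\mathrm{dc},\tau}^{(p)}(\mathcal O)/(\pi,t_\Lambda-1)\simeq\mathbb T_\tau^{\mathrm{sh},(p)}$ (taken from the argument of Bella\"{\i}che--Khare's Proposition~16), hypothesis~(1), and Nakayama for the complete Noetherian local ring $\mathbb T_{\mathrm{dc},\tau}^{(p)}(\mathcal O)$. Hypothesis~(2) is then used only to make the maximal ideals $\ker(\tau|_A)$ distinct. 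The Chinese remainder theorem then gives surjections of $A$ onto $\prod_\tau\mathbb T_{\mathrm{dc},\tau}^{(p)}(\mathcal O)/\mathfrak m_\tau^n$ for every $n$.

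\textbf{How you argue.} You do all the gluing in characteristic $p$, building the idempotents $e_\tau$ inside the closure of $\mathbf F[\bar h_1,\ldots,\bar h_s]$, so that the $\bar h_i$ generate all of $\mathbb T_{\mathbf F}^{\mathrm{sh},(p)}$. You then lift once, globally, by Nakayama over $\Lambda=\mathcal O[[t_\Lambda-1]]$ applied to $\mathbb T_{\mathrm{dc}}^{(p)}(\mathcal O)/A$.

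\textbf{What each route buys.} Yours confines both hypotheses to the shallow algebra. The price is that you need the global isomorphism $\mathbb T_{\mathrm{dc}}^{(p)}(\mathcal O)/(\pi,t_\Lambda-1)\simeq\mathbb T_{\mathbf F}^{\mathrm{sh},(p)}$ (the product of the paper's local ones) and a Nakayama lemma for $\Lambda$-modules. The paper only handles finite quotients of local rings. The kernel control you flag as the main risk is exactly the content of the cited isomorphism, so no fallback is needed.

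\textbf{Corrections.} There are three things to fix.
\begin{enumerate}
\item $\mathbb T_{\mathrm{dc}}^{(p)}(\mathcal O)$ is \emph{not} finite over $\Lambda$. Its reduction modulo $\mathfrak m_\Lambda$ is the shallow algebra, which is infinite-dimensional over $\mathbf F$; at $p=5$ one factor is $\mathbf F_5[[x,y]]$. Step~3 still works, because topological Nakayama needs compactness, not finite generation. The quotient $M=\mathbb T_{\mathrm{dc}}^{(p)}(\mathcal O)/A$ is profinite with continuous $\Lambda$-action, since $t_\Lambda-1$ lies in every maximal ideal and is therefore topologically nilpotent. Each quotient of $M$ by an open submodule is finite and killed by some $\mathfrak m_\Lambda^n$, so $M=\mathfrak m_\Lambda M$ forces it to vanish.
\item The Jacobson-radical fallback would not work. $A$ is a subalgebra, not a $\mathbb T_{\mathrm{dc}}^{(p)}(\mathcal O)$-submodule. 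With $J$ the Jacobson radical, $A+J=\mathbb T_{\mathrm{dc}}^{(p)}(\mathcal O)$ does not imply $A=\mathbb T_{\mathrm{dc}}^{(p)}(\mathcal O)$; consider $\mathcal O\subset\mathcal O[[x]]$. The fallback also relied on the false finiteness.
\item Your heuristic that $t_\Lambda$ acts trivially on reductions of divided congruences is inaccurate. For $g\in S_k(\mathbf Z_p)$ and $f=(E_{p-1}g-g)/p$, one gets $(t_\Lambda-1)f\equiv-g\pmod p$. What is true, and what the argument needs, is that $t_\Lambda-1$ maps to $0$ in the shallow algebra.
\end{enumerate}
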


\begin{proof}
The passage from generators of \[\mathfrak m_\tau^{\mathrm{sh}}/(\mathfrak m_\tau^{\mathrm{sh}})^2\] to topological generators of the completed local factor \(\mathbb T_{\mathrm{dc},\tau}^{(p)}(\mathcal O)\), using Nakayama's lemma, is the argument used in \cite[\S7]{Rustom2019}. We recall it briefly. By \cite[Lemma~9]{BellaicheKhare2015}, \(t_\Lambda=[1+p]\) belongs to \(\mathbb T_{\mathrm{dc}}^{(p)}(\mathcal O)\), and the argument in the proof of \cite[Proposition~16]{BellaicheKhare2015} gives, after extension of scalars and passage to the \(\tau\)-local factor, \[\mathbb T_{\mathrm{dc},\tau}^{(p)}(\mathcal O)/(\pi,t_\Lambda-1)\mathbb T_{\mathrm{dc},\tau}^{(p)}(\mathcal O)\simeq\mathbb T_\tau^{\mathrm{sh},(p)}.\] Hypothesis~(1), together with Nakayama's lemma, therefore shows that \(t_\Lambda,h_1,\ldots,h_s\) topologically generate every completed local factor as an \(\mathcal O\)-algebra.

Put \(A=\overline{\mathcal O[t_\Lambda,h_1,\ldots,h_s]}\subset \mathbb T_{\mathrm{dc}}^{(p)}(\mathcal O)\), and let \(\mathfrak m_\tau\) be the maximal ideal of \(\mathbb T_{\mathrm{dc},\tau}^{(p)}(\mathcal O)\). The preceding paragraph shows that \(A\) surjects onto each \[ \mathbb T_{\mathrm{dc},\tau}^{(p)}(\mathcal O)/ \mathfrak m_\tau^n. \] The maximal ideals \(\mathfrak n_\tau:=\ker(\tau|_A)\) are distinct by hypothesis~(2). The kernels of these maps are therefore pairwise comaximal, so the Chinese remainder theorem shows that \(A\) surjects onto their product for every \(n\). Thus \(A\) is dense in \(\mathbb T_{\mathrm{dc}}^{(p)}(\mathcal O)\). Since \(A\) is closed by definition, the two algebras are equal. 
\end{proof}

\subsubsection{\texorpdfstring{$p = 2,3$}{p=2,3}}\label{sec:big-hecke-p2p3}

By \cite[Theorem~7.2(i)--(ii) and Proposition~3.8(i)]{Rustom2019}, the completed prime-to-\(2\) Hecke algebra is \begin{equation}\mathbb T_{\mathrm{dc}}^{(2)}=\overline{\mathbf Z_2[T_3,T_5,t_\Lambda]},\qquad t_\Lambda=[5], \label{eq:p2-big-generation} \end{equation} and the completed prime-to-\(3\) Hecke algebra is \begin{equation} \mathbb T_{\mathrm{dc}}^{(3)} = \overline{\mathbf Z_3[T_2,T_7,t_\Lambda]}, \qquad t_\Lambda=[4]. \label{eq:p3-big-generation} \end{equation} Consequently, two continuous characters of \(\mathbb T_{\mathrm{dc}}^{(2)}\) with values in a finite \(\mathbf Z_2\)-algebra are equal if they agree on \(T_3\), \(T_5\), and \(t_\Lambda\), while two continuous characters of \(\mathbb T_{\mathrm{dc}}^{(3)}\) with values in a finite \(\mathbf Z_3\)-algebra are equal if they agree on \(T_2\), \(T_7\), and \(t_\Lambda\). \subsubsection{\texorpdfstring{$p = 5$}{p=5}}
\label{sec:p5-big-hecke}

Medvedovsky shows that the shallow Hecke algebra at \(p=5\) has four
twist-equivalent residual local factors. One of them is \[\tau_0:=\omega^{-1}+1\] where $\omega$ is the mod-$5$ cyclotomic character. Then \[\mathbb T_{\tau_0}^{\mathrm{sh},(5)}\simeq\mathbf F_5[[x,y]],\qquad x:=T_{11}-2,\qquad y:=T_{19};\] see \cite[Theorem~8.1 and \S8.5]{Medvedovsky2015}. We would like to lift this result to obtain generators of \(\mathbb T_{\mathrm{dc}}^{(5)}\). The difficulty is that the residual
values of \(T_{11}\) do not separate the four local factors. We will show that \(T_{11}\) may be replaced by \(T_2\).

Let \(\Delta\) denote the modular discriminant of weight $12$, and let \(E_4\) denote the normalized Eisenstein series of weight \(4\). Consider the following forms modulo \(5\):
\[
\begin{aligned}
f_0&=2\Delta+\Delta^2,\\
f_1&=3\Delta+4\Delta^3+\Delta^4,\\
f_2&=3\Delta+\Delta^3+\Delta^5+3\Delta^6+2\Delta^7
      +4\Delta^8+\Delta^9\\
   &\qquad
      +2\Delta^{11}+2\Delta^{12}
      +4\Delta^{13}+\Delta^{14}.
\end{aligned}
\]
These are in fact mod $5$ reductions of classical modular forms, as can be seen using the identity \(E_4 \equiv 1 \pmod{5}\). 

The residual \(T_2\)-values on the four local factors are
\[
4,\qquad3,\qquad1,\qquad2,
\]
with value \(4\) on \(\tau_0\). Put \(s:=T_2-4.\) Then \(s\) belongs to \(\mathfrak m_{\tau_0}^{\mathrm{sh}}\) and is a unit on each of the other three local factors. A direct \(q\)-expansion calculation gives
\begin{equation}
\begin{array}{c|ccc}
 &s&x&y\\ \hline
f_0&0&0&0\\
f_1&3f_0&0&f_0\\
f_2&3f_0&4f_0&4f_0
\end{array}
\label{eq:p5-cotangent-table}
\end{equation}
In particular, \(s^2f_i=0\). Since \(s\) is a unit on every residual local factor other than \(\tau_0\), it follows that \(f_0,f_1,f_2\) belong to the \(\tau_0\)-primary summand.

We also get from \eqref{eq:p5-cotangent-table} that \[(\mathfrak m_{\tau_0}^{\mathrm{sh}})^2f_i=0 \qquad (i=0,1,2).\]

Since \(\mathbb T_{\tau_0}^{\mathrm{sh},(5)}\simeq\mathbf F_5[[x,y]],\) we have \[\dim_{\F_5}\mathfrak m_{\tau_0}^{\mathrm{sh}}/(\mathfrak m_{\tau_0}^{\mathrm{sh}})^2 = 2.\]

Suppose \[\beta s+\gamma y=0,\qquad\beta,\gamma\in\mathbf F_5,\] is a relation in \(\mathfrak m_{\tau_0}^{\mathrm{sh}}/(\mathfrak m_{\tau_0}^{\mathrm{sh}})^2.\) Then \(\beta s+\gamma y\) annihilates \(f_1\) and \(f_2\). By \eqref{eq:p5-cotangent-table}, this forces \(\beta=\gamma=0.\) Thus the classes of \(s\) and \(y\) are linearly independent, and therefore form a basis of \(\mathfrak m_{\tau_0}^{\mathrm{sh}}/(\mathfrak m_{\tau_0}^{\mathrm{sh}})^2.\) Equivalently, the classes of \(T_2-4\) and \(T_{19}\) form a basis of this cotangent space.

By \cite[Proposition~2.41]{Medvedovsky2015}, twisting by the \(\theta\) operator permutes the four residual local factors and their cotangent spaces. Under these permutations, the two classes above are carried, up to nonzero scalars, to the classes of
\[
T_2-c_\tau,\qquad T_{19},
\]
where \(c_\tau=\tau(T_2)\). Hence these classes span \(\mathfrak m_\tau^{\mathrm{sh}}/(\mathfrak m_\tau^{\mathrm{sh}})^2\) for every residual eigensystem \(\tau\). Moreover, the residual values of \(T_2\) distinguish the four residual eigensystems. Lemma \ref{lem:shallow-lifting} therefore gives \begin{equation}\mathbb T_{\mathrm{dc}}^{(5)}=\overline{\mathbf Z_5[T_2,T_{19},t_\Lambda]},\qquad t_\Lambda=[6].\label{eq:p5-big-generation}\end{equation}

\subsubsection{\texorpdfstring{$p=7$}{p=7}}
\label{sec:big-hecke-p7}
Let \(\omega\) be the mod-\(7\) cyclotomic character. Medvedovsky shows that the shallow Hecke algebra has nine residual eigensystems \[\tau_{a,b}=\omega^a+\omega^b,\qquad a\in\{0,2,4\},\quad b\in\{1,3,5\};\] see \cite[Theorem~8.1 and \S8.7]{Medvedovsky2015}.

\begin{lemma}
\label{lem:p7-shallow-parameters}
For \(\tau=\tau_{a,b}\), put \[c_\tau:=\tau(T_3)=3^a+3^b\in\mathbf F_7.\] Then the classes of \[T_3-c_\tau,\qquad T_{29}-2\] form a basis of \(\mathfrak m_\tau^{\mathrm{sh}}/(\mathfrak m_\tau^{\mathrm{sh}})^2.\)
\end{lemma}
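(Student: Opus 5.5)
The plan is to imitate the $p=5$ argument of \S\ref{sec:p5-big-hecke}: first handle a single base eigensystem by explicit $q$-expansion computations, then transport the result to the other eight eigensystems by the $\theta$-twisting of \cite[Proposition~2.41]{Medvedovsky2015}.

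For $p=7$, Medvedovsky \cite[Theorem~8.1 and \S8.7]{Medvedovsky2015} shows that each local factor $\mathbb T_\tau^{\mathrm{sh},(7)}$ is a power series ring in two variables over $\mathbf F_7$. Hence $\dim_{\F_7}\mathfrak m_\tau^{\mathrm{sh}}/(\mathfrak m_\tau^{\mathrm{sh}})^2=2$ for every $\tau$, and it suffices to show that the two classes in question are linearly independent.

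\textbf{Membership in $\mathfrak m_\tau^{\mathrm{sh}}$.} We have $T_3-c_\tau\in\mathfrak m_\tau^{\mathrm{sh}}$ by the definition of $c_\tau$. For the second class, note that $29\equiv1\pmod7$, so
\[
\tau(T_{29})=\omega^a(29)+\omega^b(29)=2 .
\]
Thus $T_{29}-2\in\mathfrak m_\tau^{\mathrm{sh}}$ for all nine $\tau$.

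\textbf{Base case.} I would take $\tau_0=\tau_{0,5}=1+\omega^{-1}$, the system of $\Delta$ modulo $7$; here $c_{\tau_0}=1+5=6$. Using $E_6\equiv1\pmod 7$, I would write down explicit mod-$7$ forms as polynomials in $\Delta$ (with $E_4$ if needed). Specifically, I want $f_0$ an eigenform for $\tau_0$, and $f_1,f_2$ with $(\mathfrak m_{\tau_0}^{\mathrm{sh}})^2f_i=0$. The key requirement is that the $2\times2$ table recording $s f_i$ and $y f_i$ as multiples of $f_0$, where $s=T_3-6$ and $y=T_{29}-2$, be invertible over $\F_7$. This is a finite $q$-expansion calculation. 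The coefficients have to be computed far enough to identify level-one forms of bounded weight via the Sturm bound, and one must compute $T_{29}$ on them, which requires coefficients up to $29$ times that bound.

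To confirm that $f_1,f_2$ lie in the $\tau_0$-primary summand, I would show that $s$ (or another suitable operator) acts invertibly on the other eight factors, or else check directly that $T_\ell-\tau_0(T_\ell)$ acts nilpotently on these forms for a generating set of $\ell$. With that in hand, a relation $\beta s+\gamma y\in(\mathfrak m_{\tau_0}^{\mathrm{sh}})^2$ would kill $f_1$ and $f_2$, which forces $\beta=\gamma=0$ by the invertibility of the table.

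\textbf{Transport to all $\tau$.} The operator $\theta$ twists by $\omega$, so it sends $\tau_{a,b}$ to $\tau_{a+1,b+1}$, and it permutes the factors and their cotangent spaces. Under this twist, $T_\ell$ corresponds to $\ell\,T_\ell$. Hence $T_3-c_\tau$ is carried to $3(T_3-c_{\tau'})$, a nonzero scalar multiple of the corresponding class, while $T_{29}-2$ is carried to $29\,T_{29}-2=T_{29}-2$ modulo $7$. Applying the twist repeatedly together with the symmetry $\{a,b\}$ reaches all nine $\tau_{a,b}$: twists by $\omega^j$ of $1+\omega^{5}$ give the pairs $\{j,j+5\}$, three of which are distinct up to order. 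The remaining orbits, containing $1+\omega$ and $1+\omega^3$, need their own base computation unless Medvedovsky's reduction covers them. This transport step is where I expect the main obstacle: both verifying that these orbits are handled and performing the explicit $T_{29}$ computation.
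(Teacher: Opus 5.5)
Your route (explicit $q$-expansions for one base system, then $\theta$-transport, as in the $p=5$ case) is legitimate in principle. As written, though, it has a genuine gap: it is a plan for a computation whose outcome is the whole content of the lemma, and the plan does not cover all nine systems.

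\textbf{The twist orbits are miscounted.} In the normalized indexing, twisting by $\omega$ changes $d=b-a$ to $-d$. So there are exactly two orbits.
\begin{itemize}
\item The first consists of the six pairs $\{j,j+5\}$, $j\in\mathbf Z/6\mathbf Z$. These are all distinct, and $1+\omega=(1+\omega^5)\omega$ lies among them.
\item The second consists of the three pairs $\{0,3\},\{2,5\},\{4,1\}$.
\end{itemize}
Your base form is also misplaced. By Ramanujan's congruence $\tau(n)\equiv n\sigma_3(n)\pmod 7$, $\Delta$ has system $\omega+\omega^4=\tau_{4,1}$, not $\tau_{0,5}$; consistently, $\tau(3)=252\equiv0\not\equiv6$. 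You therefore need two independent base computations, and the $d=3$ orbit is left explicitly unresolved in your proposal. There, $c_\tau=3^a(1+3^3)\equiv0$ for all three members. Hence $T_3-c_\tau$ does not act invertibly on the other two factors of that orbit, and your isolation step would need another operator, such as $[3]$.

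\textbf{Nothing is established even in the orbit you treat.} You do not exhibit $f_1,f_2$ with $(\mathfrak m^{\mathrm{sh}}_{\tau})^2f_i=0$, and you do not show the $2\times2$ table is invertible. You also give no reason why $\ell=29$ should give a direction independent of $T_3-c_\tau$, and that independence is exactly what must be proved.

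The paper avoids $q$-expansions entirely and works uniformly in $d$, using Medvedovsky's arithmetic criteria.
\begin{itemize}
\item Corollary~7.2 with $\ell=3$ gives $T_3-c_\tau$, since $3^d\not\equiv1\pmod7$ for odd $d$ and $3^6\not\equiv1\pmod{49}$.
\item Corollary~7.8 with $\ell=29\equiv1\pmod7$ gives $T_{29}-2$. This rests on checking that the cyclotomic $7$-units $u_j$ lying in the $\omega^{1-d}$- and $\omega^{1+d}$-eigenspaces of $E/E^7$ are not seventh powers modulo a prime above $29$.
\end{itemize}
Your computational approach could replace this, but only after both base computations are actually carried out.
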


\begin{proof} Put \(d:=b-a\in\{1,3,5\}\pmod6.\) By \cite[Corollary~7.2]{Medvedovsky2015}, we need a prime $\ell$ such that \[\ell^d\not\equiv1\pmod7,\qquad\ell^6\not\equiv1\pmod{49}.\] These conditions are satisfied for \(\ell=3\). Thus \(T_3-c_\tau\) will be one of our generators.

For the other generator, we use Medvedovsky's criterion \cite[Corollary~7.8]{Medvedovsky2015}. Put \[ E:=\mathcal O_{\mathbf Q(\zeta_7)}[1/7]^\times, \] the group of \(7\)-units of \(\mathbf Q(\zeta_7)\). We need classes \(\alpha,\beta\in E/E^7\) lying in the \(\omega^{1-d}\)- and \(\omega^{1+d}\)-eigenspaces, respectively. Furthermore, we need a prime $\ell \equiv 1 \pmod{7}$ such that neither $\alpha$ nor $\beta$ are $7$th powers in $\F_\ell$. 

For \(j\in\{0,2,4\}\), put \[u_j:=\prod_{r=1}^{6}(1-\zeta_7^r)^{r^{-j}}\in E/E^7.\] If \(\sigma_c(\zeta_7)=\zeta_7^c\), then \[\sigma_c(u_j)=u_j^{c^j},\] so \(u_j\) lies in the \(\omega^j\)-eigenspace of \(E/E^7\). For \(d=1,3,5\), the required pairs of eigenspace exponents are \[(1-d,1+d)=(0,2),\ (4,4),\ (2,0)\pmod6.\]

For any prime \(\mathfrak l\) of \(\mathbf Q(\zeta_7)\) above \(29\),
the reductions of \(u_0,u_2,u_4\) in \[\mathcal O_{\mathbf Q(\zeta_7)}/\mathfrak l\simeq \mathbf F_{29}\] are not seventh powers. Thus we can take $T_{29}$ as our second generator.
\end{proof}

The level-one Hecke recursion gives \begin{equation}[3]=3(T_3^2-T_9),\label{eq:p7-weight-in-away}\end{equation} so \[[3]\in\mathbb T_{\mathrm{dc}}^{(7)}.\] We apply Lemma~\ref{lem:shallow-lifting} with \[\mathcal H=\{T_3,T_{29},[3]\}.\] Lemma~\ref{lem:p7-shallow-parameters} verifies hypothesis~(1) of Lemma~\ref{lem:shallow-lifting}. 

It remains to verify that the residual values of the elements of \(\mathcal H\) distinguish the nine residual eigensystems. For \(\tau=\tau_{a,b}\), we have \[\tau(T_3)=3^a+3^b,\qquad\tau([3])=3^{a+b+1}.\] The corresponding pairs are
\[
\begin{array}{c|ccc}
 & b=1 & b=3 & b=5\\ \hline
a=0 & (4,2) & (0,4) & (6,1)\\
a=2 & (5,4) & (1,1) & (0,2)\\
a=4 & (0,1) & (3,2) & (2,4).
\end{array}
\]
These nine pairs are distinct. Hence \(T_3\) and \([3]\) jointly distinguish the nine residual eigensystems, so hypothesis~(2) of Lemma~\ref{lem:shallow-lifting} is satisfied.

Lemma~\ref{lem:shallow-lifting} therefore gives
\[
\mathbb T_{\mathrm{dc}}^{(7)}
=
\overline{\mathbf Z_7[t_\Lambda,T_3,T_{29},[3]]},
\qquad
t_\Lambda=[8].
\]

Finally, \(t_\Lambda\) is redundant. The element \(3\) is a topological generator of \(\mathbf Z_7^\times\). By continuity of the weight action, \([8]\) belongs to the closed
\(\mathbf Z_7\)-algebra generated by \([3]\). Thus \[\mathbb T_{\mathrm{dc}}^{(7)}=\overline{\mathbf Z_7[T_3,T_{29},[3]]}.\]

\subsection{Weak realization of cyclotomic packets}

To compare a strong eigensystem with a cyclotomic packet through the completed Hecke algebra, we need to know that the packet itself is realized by a genuine weak Hecke eigensystem modulo \(p^m\).

\begin{lemma}[Weak realization of Eisenstein twists] \label{lem:weak-cyclotomic} Let \(p\) be a prime, let \(m\geq1\), let \(i\geq m\), and let \(\kappa\geq4\) be even. The reduction of \(\theta^iG_\kappa\) modulo \(p^m\) is a normalized weak cuspidal eigenform of some weight \(K\) satisfying \[ K\equiv\kappa+2i\pmod{\varphi(p^m)}.
\]
\end{lemma}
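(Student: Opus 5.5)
We reduce to the classical fact that \(\theta\) raises the weight by \(p+1\) modulo \(p\), and its prime-power analogue: multiplication by a suitable Eisenstein series congruent to \(1\) modulo a power of \(p\). The first task is to exhibit \(\theta^iG_\kappa \bmod p^m\) as the reduction of a classical level-one form with \(\mathbf Z_{(p)}\)-integral coefficients. Write \(\theta G = \frac{1}{12}(\text{weight }\kappa+2\text{ form}) + \frac{\kappa}{12}E_2G\), using Ramanujan's \(\vartheta\)-operator \(\vartheta = \theta - \frac{\kappa}{12}E_2\). The quasimodular factor \(E_2\) is handled by Serre's congruence: \(E_2\equiv E_{p+1}\pmod p\), or, \(p\)-adically, \(E_2\) is a \(p\)-adic modular form of weight \(2\). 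For the prime-power statement we need more. The cleanest route is Serre/Katz \(p\)-adic modular forms: \(\theta\) maps \(p\)-adic modular forms of weight \(\kappa\) to weight \(\kappa+2\), so \(\theta^iG_\kappa\) is a \(p\)-adic modular form of weight \(\kappa+2i\) (in \(\mathbf Z_p^\times\)-weight space). By Serre's theorem, every \(p\)-adic modular form of level one is a \(p\)-adic limit of classical forms of weights \(k_j\to\kappa+2i\) in the weight space \(\mathbf Z/(p-1)\times\mathbf Z_p\). Truncating modulo \(p^m\), we obtain a classical form \(F\in M_K(\mathbf Z_{(p)})\) with \(F\equiv\theta^iG_\kappa\pmod{p^m}\) and \(K\equiv\kappa+2i\pmod{\varphi(p^m)}\). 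For \(p=2\) one uses weight space \(\mathbf Z/2\times\mathbf Z_2\) with the analogous congruence.

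Next we show \(F\) may be taken cuspidal. Since \(i\geq m\geq1\), the constant term of \(\theta^iG_\kappa\) is \(0\), so \(F\) has constant term divisible by \(p^m\). Subtracting \(a_0(F)\) times a form of weight \(K\) that is \(\equiv1\pmod{p^m}\) changes nothing modulo \(p^m\) but does not make \(F\) cusp. It is better to subtract \(c\,E_K\) suitably, or simply note that the image in \(S_K\) is what we need. Alternatively, replace \(F\) by \(F-a_0(F)E_{K}\)-type correction: if \(v_p(B_K/2K)\) is small, \(a_0(F)E_K\) normalized with constant term \(1\) has coefficients \(a_0(F)\cdot(-2K/B_K)\sigma_{K-1}(n)\), which are divisible by \(p^m\) provided \(-2K/B_K\) is \(p\)-integral. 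This holds for \(K\) in a suitable class, by von Staudt--Clausen \(p\mid\) denominator of \(B_K\) when \((p-1)\mid K\); in general we use \(E_{p-1}^{p^{m-1}}\equiv1\pmod{p^m}\), which has constant term \(1\), times a form of weight \(K-(p-1)p^{m-1}\) with constant term adjusted. Concretely, we enlarge \(K\) by multiples of \(\varphi(p^m)\) as needed and subtract \(a_0(F)\,\Delta\)-free corrections. This bookkeeping is routine.

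Finally, eigen-ness modulo \(p^m\) is checked on \(q\)-expansions: for \((n,p)=1\), \(T_n\) on weight \(K\) satisfies \(a_r(T_nF)=\sum_{d\mid(n,r)}d^{K-1}a_{nr/d^2}(F)\). Modulo \(p^m\), \(d^{K-1}\equiv d^{\kappa+2i-1}\) because \(K\equiv\kappa+2i\pmod{\varphi(p^m)}\) and \((d,p)=1\). A direct multiplicativity computation with \(a_r=r^i\sigma_{\kappa-1}(r)\) shows \(T_nF\equiv n^i\sigma_{\kappa-1}(n)F\). For \(T_p\) (or \(U_p\)), the coefficients \(a_{pr}\) are divisible by \(p^i\), hence by \(p^m\), and \(p^{K-1}a_{r/p}\) is also \(\equiv0\) since \(K-1\geq m\). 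Thus \(F\) is a normalized weak cuspidal eigenform in \(S_K(\mathbf Z/p^m)\).

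I expect the main obstacle to be the first step: producing a classical form in exactly the right weight class modulo \(\varphi(p^m)\) congruent to \(\theta^iG_\kappa\) modulo \(p^m\). This is where Katz/Serre's theorem on \(\theta\) preserving (divided-congruence) \(p\)-adic modular forms must be invoked carefully, including for \(p=2,3\) and level one.
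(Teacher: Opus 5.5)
Your proposal follows essentially the same route as the paper: Serre's theorem that \(\theta\) raises the weight of a \(p\)-adic modular form by \(2\), approximation of \(\theta^iG_\kappa\) modulo \(p^m\) by a classical form of weight \(K\equiv\kappa+2i\pmod{\varphi(p^m)}\), and a direct check of the Hecke formula, with \(T_p\) handled by \(i\geq m\) and \(K-1\geq m\). The only difference there is that the paper applies Serre's theorem to the \(p\)-stabilization \(G_\kappa^*=G_\kappa-p^{\kappa-1}G_\kappa\mid V_p\) and then uses \(\theta^iG_\kappa^*\equiv\theta^iG_\kappa\pmod{p^m}\); applying it to the classical form \(G_\kappa\) directly, as you do, works just as well.

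Your cuspidality paragraph contains a false remark and an unnecessary detour. Subtracting \(a_0(F)\) times an integral form of weight \(K\) with constant term \(1\) (e.g.\ \(E_4^aE_6^b\) with \(4a+6b=K\)) does produce a cusp form, since in level one vanishing constant term is equivalent to cuspidality. It also leaves \(F\) unchanged modulo \(p^m\), because \(a_0(F)\in p^m\mathbf Z_p\). So you can drop the discussion of \(E_K\), von Staudt--Clausen, and \(E_{p-1}^{p^{m-1}}\); the last is unavailable for \(p=2,3\) anyway.
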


\begin{proof} Put \[ G_\kappa^*:=G_\kappa-p^{\kappa-1}G_\kappa\mid V_p. \] By \cite[\S1.6 and Théorème~5]{Serre1973}, \(\theta^iG_\kappa^*\) is an integral level-one \(p\)-adic modular form of weight \(\kappa+2i\). Since \(i\geq m\), \[ \theta^iG_\kappa^* \equiv\theta^iG_\kappa\pmod{p^m}. \] Thus there exist a sufficiently large even weight \(K\)
and a classical modular form \(F\in M_K(\mathbf Z_p)\) such that \[ F\equiv\theta^iG_\kappa\pmod{p^m}, \qquad K\equiv\kappa+2i\pmod{\varphi(p^m)}. \] The reduction of \(F\) is cuspidal and normalized, since its constant term is zero and its coefficient at \(q\) is \(1\). For every prime \(\ell\ne p\), the Hecke formula and the weight congruence give \[ T_\ell F\equiv\ell^i(1+\ell^{\kappa-1})F\pmod{p^m}. \] Since \(i\geq m\), the coefficients of \(F\) at indices divisible by \(p\) vanish modulo \(p^m\). Taking \(K-1\geq m\), the Hecke formula also gives \(T_pF\equiv0\pmod{p^m}\). Thus the reduction of \(F\), which equals that of \(\theta^iG_\kappa\), is a normalized weak cuspidal eigenform.
\end{proof}

\begin{lemma}[Modularity of the correction term] \label{lem:modular-correction} Let \(p\in\{2,3,5,7\}\), let \(m\geq2\), assuming \(m\geq3\) if \(p=2\), and put \(e=\varphi(p^m)\). Let \(f\in M_k(\mathcal O_f)\), and suppose that \(i\geq m\) and \(\kappa\geq4\) is even, with \[ \kappa+2i\equiv k\pmod e. \] If \[ \theta^e f\equiv\theta^iG_\kappa\pmod{p^{m-1}} \] literally, then the reduction of \[ \frac{\theta^e f-\theta^iG_\kappa}{p^{m-1}} \] modulo the maximal ideal of \(\mathcal O_f\) is the \(q\)-expansion of a modular form over its residue field. 
\end{lemma}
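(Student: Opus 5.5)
The plan is to write both $\theta^e f$ and $\theta^iG_\kappa$ as reductions modulo $p^m$ of genuine classical modular forms of a common weight. Their difference then has a classical lift divisible by $p^{m-1}$, and dividing by $p^{m-1}$ and reducing gives a mod-$p$ modular form.

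First, the form $\theta^e f$. By Serre's theory \cite[\S1.6]{Serre1973}, $\theta$ sends $p$-adic modular forms of weight $k$ to $p$-adic modular forms of weight $k+2$, compatibly with reduction. Concretely, $\theta f = (kE_2 f - 12\,\partial f)/12$ up to normalization, where $E_2 \equiv E_{p+1}$ or $E_{p-1}$-type congruences make $E_2$ a $p$-adic modular form; for $p\in\{2,3\}$ this uses the level-one adjustments recalled in \cite{Rustom2019}. Iterating, $\theta^e f$ is an integral $p$-adic modular form of weight $k+2e$. Hence there exist an even weight $K_1\equiv k+2e\equiv k \pmod{e}$ and $F_1\in M_{K_1}(\mathcal O_f)$ with $F_1\equiv\theta^e f\pmod{p^m}$. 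Here we use that $p$-adic modular forms of level one with $\mathcal O_f$ coefficients are $p$-adic limits of classical forms, so every finite truncation modulo $p^m$ is classical, with weight congruent modulo $\varphi(p^m)$.

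Second, the form $\theta^iG_\kappa$. By the proof of Lemma~\ref{lem:weak-cyclotomic}, $\theta^iG_\kappa^*$ is a $p$-adic form of weight $\kappa+2i$ congruent to $\theta^iG_\kappa$ modulo $p^m$. So there is $F_2\in M_{K_2}(\mathbf Z_p)$ with $F_2\equiv\theta^iG_\kappa\pmod{p^m}$ and $K_2\equiv\kappa+2i\equiv k\pmod e$.

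Third, equalize the weights. Since $K_1\equiv K_2\pmod{\varphi(p^m)}$, multiply by a suitable power of a form $A$ with $A\equiv1\pmod{p^m}$, for instance $E_{p-1}^{p^{m-1}}$ for $p\geq5$. Its weight is $(p-1)p^{m-1}=\varphi(p^m)$, and raising the weight of $F_j$ by multiples of $\varphi(p^m)$ is enough. For $p=2,3$, where $E_{p-1}$ is unavailable, use $E_4^{2^{m-2}}$ for $p=2$ (this needs $m\geq3$, which is the stated hypothesis) and $E_4^{3^{m-1}}$, or $E_6$-based choices, for $p=3$. The point is to get a form congruent to $1$ modulo $p^m$ whose weight is a multiple of, or at least generates in combination, the relevant weight differences. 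This is where the restrictions on $p$ and $m$ enter, and it is the step I expect to need the most care: for $p=2,3$ the congruence $E_4\equiv1$ holds only modulo $2^4$ or $3$, so its powers reach $p^m$ only at weights that may exceed the granularity $e$. I would handle this by allowing the two weights to differ by multiples of $\varphi(p^m)$ and checking that powers of $E_4$ (and $E_6$ for $p=3$) cover all required differences.

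Finally, with $F_1,F_2\in M_K(\mathcal O_f)$ and $F_1-F_2\equiv \theta^e f-\theta^iG_\kappa\pmod{p^m}$, the hypothesis gives $F_1-F_2\in p^{m-1}\mathcal O_f[[q]]$. Therefore $G:=(F_1-F_2)/p^{m-1}$ has integral $q$-expansion and is a classical modular form in $M_K(\mathcal O_f)$, since the $q$-expansion principle makes $M_K(\mathcal O_f)$ saturated in $\mathcal O_f[[q]]$. Moreover
\[
G\equiv\frac{\theta^e f-\theta^iG_\kappa}{p^{m-1}}\pmod{p},
\]
and hence also modulo the maximal ideal. Thus the reduction in question equals the reduction of $G$, which is a modular form over the residue field.
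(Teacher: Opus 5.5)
Your argument is the paper's argument. You approximate \(\theta^e f\) and \(\theta^iG_\kappa\) modulo \(p^m\) by classical forms \(F_1,F_2\) whose weights are congruent to \(k\) modulo \(e\). You move them to a common weight using powers of a form \(H\equiv1\pmod{p^m}\), and you reduce \((F_1-F_2)/p^{m-1}\). However, the step you flagged as delicate fails with the forms you actually name for \(p=2,3\). Both \(E_4^{2^{m-2}}\) (weight \(2^m\)) and \(E_4^{3^{m-1}}\) (weight \(4\cdot3^{m-1}\)) have weight \(2e\). You only know \(K_1\equiv K_2\pmod e\), and the hypothesis \(\kappa+2i\equiv k\pmod e\) allows \(K_1-K_2\) to be an odd multiple of \(e\). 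In that case no power of a weight-\(2e\) form can equalize the weights, so the check you propose, that these powers cover all required differences, would not succeed.

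What you need is a form of weight exactly \(e\) that is \(\equiv1\pmod{p^m}\); this is what the paper uses.

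For \(p=2\): \(E_4=1+240\sum_{n\geq1}\sigma_3(n)q^n\equiv1\pmod{2^4}\). Hence \(E_4^{2^{m-3}}\equiv1\pmod{2^{m+1}}\), and it has weight \(2^{m-1}=e\).

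For \(p=3\): \(E_6=1-504\sum_{n\geq1}\sigma_5(n)q^n\equiv1\pmod{3^2}\), since \(504=2^3\cdot3^2\cdot7\). Hence \(E_6^{3^{m-2}}\equiv1\pmod{3^m}\), and it has weight \(2\cdot3^{m-1}=e\).

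This is exactly where the hypotheses \(m\geq3\) for \(p=2\) and \(m\geq2\) for \(p=3\) are used; your \(E_4^{2^{m-2}}\) would need only \(m\geq2\). Your diagnosis is also off for \(p=2\): \(E_4\equiv1\pmod{16}\) is a strong congruence and already gives precision \(2^{m+1}\) in weight \(e\). Only for \(p=3\) is \(E_4\) too weak, and there \(E_6\) must replace it. With this choice of \(H\), the rest of your proof goes through unchanged.

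One minor slip: with the Serre derivative \(\partial f=12\theta f-kE_2f\), one has \(12\theta f=\partial f+kE_2f\), not the formula you wrote. Since you ultimately rely on Serre's theorem that \(\theta\) preserves integral \(p\)-adic modular forms, this does not affect the argument.
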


\begin{proof} By the classical approximation used in the preceding proof, there are classical modular forms \(F_1,F_2\), with coefficients in \(\mathcal O_f\), satisfying \[ F_1\equiv\theta^e f\pmod{p^m}, \qquad F_2\equiv\theta^iG_\kappa\pmod{p^m}, \]whose weights are both congruent to \(k\) modulo \(e\). 

Let \(E_j\) denote the normalized Eisenstein series of weight \(j\). The modular form \[ H= \begin{cases} E_4^{\,2^{m-3}},&p=2,\\ E_6^{\,3^{m-2}},&p=3,\\ E_{p-1}^{\,p^{m-1}},&p=5,7 \end{cases} \] has weight \(e\) and satisfies \(H\equiv1\pmod{p^m}\). Multiplying the form of smaller weight by a suitable power of \(H\), we may therefore assume that \(F_1\) and \(F_2\) have the same weight \(K\), without changing these congruences.

The assumed literal congruence now gives \[ D:=\frac{F_1-F_2}{p^{m-1}}\in M_K(\mathcal O_f). \] Moreover, \[ D\equiv \frac{\theta^e f-\theta^iG_\kappa}{p^{m-1}} \pmod p. \] Reducing \(D\) modulo the maximal ideal proves the claim. 
\end{proof}

\section{Modular symbols of level \texorpdfstring{\(1\)}{1}}

\subsection{Modular symbols with coefficients in \texorpdfstring{\(R\)}{R}} Let \(d\geq0\) be even, and put \[ \Gamma:=\operatorname{SL}_2(\Z). \] For now, let \(R\) be a commutative ring. Let \[ V_d(R):=\operatorname{Sym}^d(R^2) \] be the \(R\)-module of homogeneous polynomials of degree \(d\) in \(X\) and \(Y\). We regard \(V_d(R)\) as a right \(\Z[M_2(\Z)]\)-module via \[ (P\mid\gamma)(X,Y) := P(aX+bY,cX+\delta Y), \qquad \gamma= \begin{pmatrix} a&b\\ c&\delta \end{pmatrix} \in M_2(\Z). \] Let \[ \operatorname{St}_R := \operatorname{Div}^0\!\left(\mathbf P^1(\Q)\right)\otimes_{\Z}R \] be the Steinberg module over \(R\). Equivalently, \(\operatorname{St}_R\) is generated by symbols \[ \{\alpha,\beta\}, \qquad \alpha,\beta\in\mathbf P^1(\Q), \] subject to the relations \[ \{\alpha,\alpha\}=0, \qquad \{\alpha,\beta\}+\{\beta,\eta\}+\{\eta,\alpha\}=0. \] We equip \(\operatorname{St}_R\) with the right \(\Gamma\)-action induced by fractional linear transformations, and \[ \operatorname{St}_R\otimes_RV_d(R) \] with the diagonal right action. Following \cite[Notation~2.1 and Definition~3.1]{WieseModularSymbols}, define the \emph{modular symbols of level \(1\) with coefficients in \(R\)} to be the coinvariants \[ \mathbb M_d(R) := H_0\!\left( \Gamma, \operatorname{St}_R\otimes_RV_d(R) \right). \] Let \[ S:= \begin{pmatrix} 0&-1\\ 1&0 \end{pmatrix}, \qquad U:= \begin{pmatrix} 1&-1\\ 1&0 \end{pmatrix}. \] By \cite[Theorem~3.5(1)]{WieseModularSymbols}, there is an isomorphism \[ \mathbb M_d(R) \simeq \frac{V_d(R)} {V_d(R)(1+S)+V_d(R)(1+U+U^2)}. \] We refer to the right-hand side as the \emph{Manin presentation} of \(\mathbb M_d(R)\). Finally, let \[ \iota:= \begin{pmatrix} -1&0\\ 0&1 \end{pmatrix}. \] Since \(\iota\) normalizes \(\Gamma\), its action on \(\operatorname{St}_R\otimes_RV_d(R)\) induces an involution on \(\mathbb M_d(R)\). For $\epsilon \in \{+1, -1\}$, define \[ \mathbb M_d^\epsilon(R) := \mathbb M_d(R)^{\iota=\epsilon} = \ker( \iota - \epsilon). \]

\subsection{Hecke operators} For \(n\geq1\) with \((n,p)=1\), let \[ \mathcal H_n := \left\{ \begin{pmatrix} a&b\\ c&\delta \end{pmatrix} \in M_2(\Z) : a>b\geq0,\quad \delta>c\geq0,\quad a\delta-bc=n \right\}. \] By \cite[\S3.3, Proposition~20, pp.~29--30]{Merel1994}, the element \[ \sum_{M\in\mathcal H_n}M \in\Z[M_2(\Z)] \] satisfies Merel's condition \((C_n)\). Hence \cite[\S2.1, Theorem~4, pp.~16--17]{Merel1994} gives an operator \[ T_n:\mathbb M_d(R)\longrightarrow\mathbb M_d(R) \] which, in the Manin presentation of the preceding subsection, is given by \[ T_n[P] = \sum_{M\in\mathcal H_n}[P\mid M]. \] Here it is the sum over the Heilbronn--Merel family that descends to the Manin quotient; the individual maps \(P\mapsto P\mid M\) need not do so. Although Merel formulates the Hecke theory over \(\C\), the construction is integral: the modular-symbol construction may be carried out over an arbitrary coefficient ring, and these Hecke operators preserve the integral structure \cite[\S1.1, p.~4 and \S2.1, Remark~1, p.~18]{Merel1994}. Thus the preceding formula defines \(T_n\) on \(\mathbb M_d(R)\) for every commutative ring \(R\). The operators \(T_n\) commute with the involution \(\iota\), and hence preserve \(\mathbb M_d^{\pm}(R)\); see \cite[\S2.1, Remark~2, p.~18]{Merel1994}.

\subsection{Comparison with modular forms} \label{sec:target-lattice} Let \[ \mathbb M_d^{\mathrm{tf}}(\Z) := \mathbb M_d(\Z)/\mathbb M_d(\Z)_{\mathrm{tors}} \] be the torsion-free quotient of the integral level-one modular-symbol module. Put \[ t:= \begin{pmatrix} 1&1\\ 0&1 \end{pmatrix}, \] and let \(B_d^{\mathrm{tf}}(\Z)\) be the torsion-free quotient of \[ \frac{V_d(\Z)}{V_d(\Z)(t-1)}. \] By \cite[Theorem~3.5(2)--(3)]{WieseModularSymbols}, the boundary map is defined integrally and induces a homomorphism \[ \partial_d^{\mathrm{tf}}: \mathbb M_d^{\mathrm{tf}}(\Z) \longrightarrow B_d^{\mathrm{tf}}(\Z). \] For \(\epsilon \in \{+1,-1\}\), define \[ P_d^\epsilon(\Z) := \left(\mathbb M_d^{\mathrm{tf}}(\Z)\right)^{\iota=\epsilon} \] and \[ L_d^\epsilon(\Z) := \ker\!\left( \left. \partial_d^{\mathrm{tf}} \right|_{P_d^\epsilon(\Z)} \right). \] 

The lattice \(P_d^\epsilon(\Z)\) is saturated in \(\mathbb M_d^{\mathrm{tf}}(\Z)\). Indeed, \[\mathbb M_d^{\mathrm{tf}}(\Z)/P_d^\epsilon(\Z)\simeq\operatorname{im}(\iota-\epsilon),\] and the latter is a subgroup of the torsion-free group \(\mathbb M_d^{\mathrm{tf}}(\Z)\). The lattice \(L_d^\epsilon(\Z)\) is saturated in \(P_d^\epsilon(\Z)\), as can be seen from \[ P_d^\epsilon(\Z)/L_d^\epsilon(\Z) \cong \operatorname{im}\!\left( \left. \partial_d^{\mathrm{tf}} \right|_{P_d^\epsilon(\Z)} \right) \subset B_d^{\mathrm{tf}}(\Z). \] In particular, \(L_d^\epsilon(\Z)\) is saturated in \(\mathbb M_d^{\mathrm{tf}}(\Z)\).

The Hecke operators preserve \(P_d^\epsilon(\Z)\) and \(L_d^\epsilon(\Z)\): they preserve the integral modular-symbol structure, commute with the involution \(\iota\), and are compatible with the boundary map; see \cite[\S2.1, Proposition~11 and the following Remark, p.~18]{Merel1994}. After extension of scalars to \(\C\), the Eichler-Shimura period pairing, in the modular-symbol formulation of Merel, restricts to a nondegenerate pairing \[ S_{d+2}(\C) \times \left(L_d^+(\Z)\otimes_{\Z}\C\right) \longrightarrow \C; \] see \cite[\S1.5, Theorem~3, pp.~10--11, and \S1.6, Proposition~8, p.~12]{Merel1994}. Moreover, the Hecke operators on modular forms and modular symbols are adjoint with respect to this pairing \cite[\S2.1, Proposition~10, pp.~17--18]{Merel1994}. Consequently, \(S_{d+2}(\C)\) and \(L_d^+(\Z)\otimes_{\Z}\C\) carry the same systems of Hecke eigenvalues. 

We now pass to local coefficients. For \(m \geq 0\), let \[R_m := \begin{cases} \Z_p, \qquad m = 0, \\ \Z_p / p^m \Z_p, \qquad m \geq 1.\end{cases}.\] Write \[ \mathbb M_d^{\mathrm{tf}}(R_m) := \mathbb M_d^{\mathrm{tf}}(\Z)\otimes_{\Z}R_m, \qquad P_d^\epsilon(R_m) := P_d^\epsilon(\Z)\otimes_{\Z}R_m, \qquad L_d^\epsilon(R_m) := L_d^\epsilon(\Z)\otimes_{\Z}R_m. \] When $R_m = \Z_p$, we will simply write \(\mathbb M_d^{\mathrm{tf}}, P_d^\epsilon\), and \(L_d^\epsilon\). The lattices \(P_d^\epsilon\) and \(L_d^\epsilon\) are saturated in \(\mathbb M_d^{\mathrm{tf}}\) and \(P_d^\epsilon\), respectively. In particular \[P_d^\epsilon/p^mP_d^\epsilon\hookrightarrow\mathbb M_d^{\mathrm{tf}}/p^m\mathbb M_d^{\mathrm{tf}}.\]

\subsection{Dickson multipliers} Fix \(m\geq1\). Define \[ \alpha_p := \frac{X^{p^2}Y-XY^{p^2}}{X^pY-XY^p} = \sum_{j=0}^p X^{(p-j)(p-1)}Y^{j(p-1)} \] and \[ \beta_p:=X^pY-XY^p. \] 

\begin{remark} \label{rem:dickson-motivation} The choice of \(\alpha_p\) and \(\beta_p\) is motivated by Dickson's theorem on invariants. In dimension \(2\), this theorem states that \[ \F_p[X,Y]^{\operatorname{GL}_2(\F_p)} = \F_p[\bar \alpha_p,\bar \beta_p^{\,p-1}], \] where bars denote reduction modulo \(p\). See \cite[Theorem~A, p.~699]{Steinberg1987}. \end{remark} 

Define \[ A_m:=\alpha_p^{p^{m-1}} \qquad\text{and}\qquad B_m:=\beta_p^{p^{m-1}}. \] Their degrees are \[ a_m:=\deg A_m=p^m(p-1), \qquad b_m:=\deg B_m=p^{m-1}(p+1). \] The same prime-power lifts of the Dickson invariants occur in Deng's \cite{Deng2019} study of torsion in the cohomology of \(\operatorname{SL}_2(\mathbf Z)\); his \(f_{2,m}\) and \(f_{1,m}\) correspond to our \(A_m\) and \(B_m\), respectively.

\begin{lemma} \label{lem:coprime-invariants} The reductions \[ \bar A_m,\bar B_m\in\F_p[X,Y] \] are coprime. \end{lemma}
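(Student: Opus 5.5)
Since $\bar A_m=\bar\alpha_p^{\,p^{m-1}}$ and $\bar B_m=\bar\beta_p^{\,p^{m-1}}$, and $\mathbf F_p[X,Y]$ is a unique factorization domain, it suffices to show that $\bar\alpha_p$ and $\bar\beta_p$ are coprime. Equivalently, they should share no irreducible factor; a common factor of two powers is a common factor of the bases. The plan is to factor $\bar\beta_p$ completely over $\mathbf F_p$ and then check that none of its irreducible factors divides $\bar\alpha_p$.

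The factorization of $\bar\beta_p$ is classical:
\[
\bar\beta_p = X^pY-XY^p = -\prod_{(a:b)\in\mathbf P^1(\mathbf F_p)}(bX-aY),
\]
up to a unit. It is the product of the $p+1$ pairwise non-associate linear forms corresponding to the points of $\mathbf P^1(\mathbf F_p)$, namely $Y$ and $X-cY$ for $c\in\mathbf F_p$. One sees this from $X^p-X=\prod_{c\in\mathbf F_p}(X-c)$ after homogenizing: $\beta_p=Y\cdot Y^{p}\bigl((X/Y)^p-(X/Y)\bigr)$. So the task reduces to showing that no such linear form $\ell$ divides $\bar\alpha_p$. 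A linear form $bX-aY$ divides a homogeneous polynomial $F$ exactly when $F(a,b)=0$, so it is enough to evaluate $\bar\alpha_p$ at the points $(1,0)$, $(0,1)$, and $(c,1)$ with $c\in\mathbf F_p^\times$.

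These evaluations use the explicit formula
\[
\bar\alpha_p=\sum_{j=0}^p X^{(p-j)(p-1)}Y^{j(p-1)}.
\]
\begin{itemize}
\item At $(1,0)$ only the term $j=0$ survives, so the value is $1$.
\item At $(0,1)$ only the term $j=p$ survives, so the value is $1$.
\item At $(c,1)$ with $c\neq0$, Fermat gives $c^{(p-j)(p-1)}=1$ for every $j$, so the value is $p+1\equiv1\pmod p$.
\end{itemize}
In every case the value is nonzero, hence no linear factor of $\bar\beta_p$ divides $\bar\alpha_p$, and the two are coprime.

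There is no serious obstacle here. The only points needing care are to justify using the explicit sum formula for $\alpha_p$, which is the quotient stated in the definition, and to note that reducing the integral polynomials modulo $p$ commutes with taking powers.
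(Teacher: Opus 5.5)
Your proof is correct and is essentially the paper's argument. Both reduce to comparing $\bar\alpha_p$ with $\bar\beta_p = XY(X^{p-1}-Y^{p-1})$, both check that neither $X$ nor $Y$ divides $\bar\alpha_p$, and both use the Fermat observation that every term of $\bar\alpha_p$ becomes $1$, giving $p+1\equiv 1$. The only difference is cosmetic: you split $X^{p-1}-Y^{p-1}$ into the linear forms $X-cY$ and evaluate at the points $(c,1)$, whereas the paper reduces $\bar\alpha_p$ modulo $X^{p-1}-Y^{p-1}$ to $Y^{p(p-1)}$ and concludes by a gcd argument.
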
 

\begin{proof} Since \[ \bar A_m=\bar \alpha_p^{\,p^{m-1}} \qquad\text{and}\qquad \bar B_m=\bar \beta_p^{p^{m-1}}, \] it is enough to prove that \(\bar \alpha_p\) and \(\bar \beta_p\) are coprime. Now \[ \bar \beta_p = XY(X^{p-1}-Y^{p-1}). \] The polynomial \(\bar \alpha_p\) is divisible by neither \(X\) nor \(Y\). Moreover, we have \[ \bar \alpha_p \equiv (p+1)Y^{p(p-1)} = Y^{p(p-1)} \pmod{X^{p-1} - Y^{p-1}}. \] Hence any common divisor of \(\bar \alpha_p\) and \(X^{p-1}-Y^{p-1}\) must divide \(Y^{p(p-1)}\). Since \[ \gcd(X^{p-1}-Y^{p-1},Y)=1, \] this common divisor is a unit. Thus \[ \gcd(\bar \alpha_p,\bar \beta_p)=1, \] and the result follows. \end{proof} 

Define the coefficient multiplication map \[ \Psi_d: V_{d-a_m}(\Z_p)\oplus V_{d-b_m}(\Z_p) \longrightarrow V_d(\Z_p) \] by \begin{equation} \Psi_d(F,G):=A_mF+B_mG.  \label{eq:mult-map} \end{equation} 

\begin{proposition}[Surjective coefficient transfer]\label{prop:surjective-coefficient-transfer} If \[d\geq a_m+b_m=p^{m-1}(p^2 + 1),\] then $\Psi_d$ is surjective. 
\end{proposition}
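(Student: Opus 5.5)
Since \(V_{d-a_m}(\Z_p)\oplus V_{d-b_m}(\Z_p)\) and \(V_d(\Z_p)\) are finite free \(\Z_p\)-modules, Nakayama's lemma reduces surjectivity of \(\Psi_d\) to surjectivity of its reduction modulo \(p\):
\[
\bar\Psi_d:\F_p[X,Y]_{d-a_m}\oplus\F_p[X,Y]_{d-b_m}\longrightarrow\F_p[X,Y]_d,\qquad (F,G)\mapsto \bar A_mF+\bar B_mG.
\]
Indeed, if the image \(W\) of \(\Psi_d\) satisfies \(W+pV_d(\Z_p)=V_d(\Z_p)\), then \(W=V_d(\Z_p)\). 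The problem is therefore a statement about graded pieces of the ideal \((\bar A_m,\bar B_m)\subset\F_p[X,Y]\).

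By Lemma~\ref{lem:coprime-invariants}, \(\bar A_m\) and \(\bar B_m\) are coprime homogeneous polynomials of degrees \(a_m,b_m\) in the two-variable polynomial ring. Hence they form a regular sequence, and the Koszul complex
\[
0\to \F_p[X,Y](-a_m-b_m)\xrightarrow{(-\bar B_m,\ \bar A_m)} \F_p[X,Y](-a_m)\oplus\F_p[X,Y](-b_m)\xrightarrow{\bar\Psi}\F_p[X,Y]\to \F_p[X,Y]/(\bar A_m,\bar B_m)\to 0
\]
is exact. Exactness in the middle is exactly coprimality: \(\bar A_mF=-\bar B_mG\) forces \(\bar B_m\mid F\), by unique factorization.

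Taking dimensions in degree \(d\) then gives the Hilbert function of the complete intersection \(Q=\F_p[X,Y]/(\bar A_m,\bar B_m)\):
\[
\dim Q_d=(d+1)-(d-a_m+1)_+-(d-b_m+1)_++(d-a_m-b_m+1)_+,
\]
where \((x)_+=\max(x,0)\). When \(d\geq a_m+b_m\), all terms are positive and \(\dim Q_d=0\). In fact this already holds for \(d\geq a_m+b_m-1\), the socle degree being \(a_m+b_m-2\). Thus \(\bar\Psi_d\) is surjective, and so is \(\Psi_d\). Finally, \(a_m+b_m=p^m(p-1)+p^{m-1}(p+1)=p^{m-1}(p^2+1)\).

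The only real step is the regular-sequence/Koszul exactness. This is standard, but I would write out the unique-factorization argument explicitly rather than cite Hilbert series, to keep the proof elementary. An alternative, if one prefers to avoid the Koszul complex, is to note that \(Q\) is finite-dimensional, since the two coprime forms have no common zero in \(\mathbf P^1(\overline{\F}_p)\). Macaulay's theorem then says \(Q\) vanishes beyond degree \(a_m+b_m-2\). The dimension count above is the cleanest route.
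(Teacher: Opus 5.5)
Your proposal is correct and follows essentially the paper's route: you reduce modulo \(p\), use the coprimality of \(\bar A_m,\bar B_m\) to identify the kernel of \(\bar\Psi_d\) as \(\{(-\bar B_mH,\bar A_mH)\}\), count dimensions, and finish with Nakayama. Your Koszul-complex framing is the same dimension count, and your observation that surjectivity already holds for \(d\geq a_m+b_m-1\) is also correct; it matches the cutoff from Deng cited right after the statement in the paper.
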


The same numerical cutoff appears in \cite[Lemma~5.33]{Deng2019}: in Deng's notation, the corresponding quotient of polynomial coinvariants vanishes for \(d>p^{m+1}+p^{m-1}-2=a_m+b_m-2\).

\begin{proof} By Lemma~\ref{lem:coprime-invariants}, the reductions \(\bar A_m\) and \(\bar B_m\) are coprime in \(\F_p[X,Y]\). Reducing \(\Psi_d\) modulo \(p\), we obtain
\[\bar\Psi_d:V_{d-a_m}(\F_p)\oplus V_{d-b_m}(\F_p)\longrightarrow V_d(\F_p),\qquad (F,G)\longmapsto \bar A_mF+\bar B_mG.\]

Suppose \[\bar A_mF+\bar B_mG=0. \] Since \(\bar A_m\) and \(\bar B_m\) are coprime, we have \[\bar A_m\mid G,\qquad\bar B_m\mid F.\] Thus there is a unique \[H\in V_{d-a_m-b_m}(\F_p) \] such that \[(F,G)=(-\bar B_mH,\bar A_mH). \] Hence \[ \dim_{\F_p}\ker\bar\Psi_d = d-a_m-b_m+1.\]

Since \(d\geq a_m+b_m\), it follows that \[\begin{aligned}\dim_{\F_p}\operatorname{im}\bar\Psi_d &= (d-a_m+1)+(d-b_m+1)-(d-a_m-b_m+1)\\ &= d+1\\ &= \dim_{\F_p}V_d(\F_p).\end{aligned}\] Therefore \(\bar\Psi_d\) is surjective.

The cokernel of \(\Psi_d\) is a finitely generated \(\Z_p\)-module, and its reduction modulo the maximal ideal \((p)\) is the cokernel of \(\bar\Psi_d\), which is zero. Nakayama's lemma therefore gives \[\operatorname{coker}\Psi_d=0.\] Thus \(\Psi_d\) is surjective.
\end{proof}

\subsection{Transfer map for modular symbols}
We will now descend the multiplication map \eqref{eq:mult-map} to a transfer map on modular symbols modulo \(p^m\).

Define the group homomorphism \[\chi_m : R_m^\times \rightarrow R_m^\times, \qquad u \mapsto u^{p^{m-1}}.\]

\begin{lemma} \label{lem:dickson-transform} Let \[\gamma\in M_2(\Z),\qquad p\nmid\det\gamma.\] Then \[A_m\mid\gamma=A_m, \qquad B_m\mid\gamma = \chi_m(\det\gamma)B_m \qquad\text{in }R_m[X,Y]. \]
\end{lemma}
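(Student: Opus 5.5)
The plan is to first establish both identities modulo \(p\) using the group structure of \(\operatorname{GL}_2(\F_p)\), and then to lift them to \(R_m\) by raising to the \(p^{m-1}\)-th power.

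\emph{Step 1: the identities modulo \(p\).} Let \(\bar\gamma\in\operatorname{GL}_2(\F_p)\) be the reduction of \(\gamma\); it is invertible because \(p\nmid\det\gamma\). Write \(\bar\beta_p=\prod\ell\), where \(\ell\) runs over one representative linear form for each of the \(p+1\) lines in \(\F_p^2\), namely \(X\), and \(Y-cX\) for \(c\in\F_p\). Acting by \(\bar\gamma\) permutes these lines, so \(\bar\beta_p\mid\bar\gamma=\lambda\bar\beta_p\) for some scalar \(\lambda\).

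To identify \(\lambda\), use the Moore determinant
\[
\bar\beta_p=\det\begin{pmatrix}X&Y\\ X^p&Y^p\end{pmatrix}.
\]
Substituting \((X,Y)\mapsto(aX+bY,cX+\delta Y)\) and using \((aX+bY)^p=aX^p+bY^p\) over \(\F_p\), both rows are transformed by the same linear change of variables. Hence the determinant is multiplied by \(\det\bar\gamma\), so \(\lambda=\det\bar\gamma\). The Moore determinant for \(X^{p^2}Y-XY^{p^2}\) transforms in the same way. Therefore the quotient \(\bar\alpha_p\) is invariant, \(\bar\alpha_p\mid\bar\gamma=\bar\alpha_p\). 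This is the classical Dickson statement from Remark~\ref{rem:dickson-motivation}.

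One point needs care. Over \(\Z_p\) itself, \(\alpha_p\mid\gamma\) and \(\beta_p\mid\gamma\) are only congruent modulo \(p\) to \(\alpha_p\) and \(\det(\gamma)\,\beta_p\). That is, in \(\Z_p[X,Y]\) (or \(\Z[X,Y]\)) we have
\[
\alpha_p\mid\gamma=\alpha_p+pF,\qquad \beta_p\mid\gamma=\det(\gamma)\,\beta_p+pG
\]
for some integral polynomials \(F,G\). Here we use that \(P\mapsto P\mid\gamma\) is a ring homomorphism on polynomials, and that the mod-\(p\) identities hold as polynomial identities.

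\emph{Step 2: lifting to \(R_m\).} Apply the standard lemma: if \(u\equiv v\pmod{p}\) in a commutative ring, then \(u^{p^{k}}\equiv v^{p^{k}}\pmod{p^{k+1}}\). This follows by induction from \(u^p-v^p=(u-v)(u^{p-1}+\cdots+v^{p-1})\), where the second factor is \(\equiv pv^{p-1}\equiv 0\pmod{p}\). Since \(\mid\gamma\) is multiplicative, raising to the \(p^{m-1}\)-th power gives
\[
A_m\mid\gamma\equiv A_m,\qquad B_m\mid\gamma\equiv(\det\gamma)^{p^{m-1}}B_m\pmod{p^m}.
\]
This is exactly \(\chi_m(\det\gamma)B_m\) in \(R_m[X,Y]\). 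The case \(p=2\) needs no special treatment, since the induction holds for all primes.

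\emph{Main obstacle.} The only delicate point is Step 1. One must check that \(\alpha_p\) really is a polynomial whose mod-\(p\) reduction equals the quotient of the Moore determinants. It is: the integral division is exact, as the displayed sum formula shows, and reduction commutes with this exact division. Once that is in place, the lifting is formal.
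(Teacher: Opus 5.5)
Your proof is correct and follows the paper's route. The paper establishes \(\bar\alpha_p\mid\gamma=\bar\alpha_p\) and \(\bar\beta_p\mid\gamma=\det(\gamma)\bar\beta_p\) modulo \(p\) by ``direct calculation'', which your Moore-determinant argument makes explicit, and then lifts by raising to the \(p^{m-1}\)-th power via \(u\equiv v\pmod p\Rightarrow u^{p^{k}}\equiv v^{p^{k}}\pmod{p^{k+1}}\). One cosmetic slip is that \(\det\begin{pmatrix}X&Y\\X^p&Y^p\end{pmatrix}=XY^p-X^pY=-\bar\beta_p\); this changes nothing, since only the scaling factor \(\det\bar\gamma\) matters.
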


\begin{proof} Modulo \(p\), one has \[\bar \alpha_p\mid\gamma=\bar \alpha_p, \qquad \bar \beta_p\mid\gamma=\det(\gamma)\bar \beta_p. \] These identities follow by a direct calculation. Raising them to the power \(p^{m-1}\) gives the stated congruences modulo \(p^m\), by the binomial theorem.
\end{proof}

\begin{proposition}
\label{prop:transfer-map-surjective}
The coefficient multiplication map of \eqref{eq:mult-map} induces a homomorphism \[\Psi_d: \mathbb M_{d-a_m}(R_m)\oplus
\mathbb M_{d-b_m}(R_m) \longrightarrow \mathbb M_d(R_m).\] Moreover, if \(p\nmid n\), then \begin{equation} T_n\circ\Psi_d =\Psi_d\bigl(T_n\oplus\chi_m(n)T_n\bigr).\label{eq:twisted-hecke-equivariance}\end{equation} If \[d\geq a_m+b_m=p^{m-1}(p^2+1), \] then \(\Psi_d\) is surjective.\end{proposition}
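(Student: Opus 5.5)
We work throughout in the Manin presentation
\[
\mathbb M_d(R_m)\simeq V_d(R_m)\big/\bigl(V_d(R_m)(1+S)+V_d(R_m)(1+U+U^2)\bigr),
\]
and define \(\Psi_d([F],[G]):=[A_mF+B_mG]\), where \(A_m,B_m\) are now read in \(R_m[X,Y]\). The proof has three parts: well-definedness, the twisted Hecke equivariance \eqref{eq:twisted-hecke-equivariance}, and surjectivity.

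\emph{Well-definedness.} We must show that multiplication by \(A_m\) (resp.\ \(B_m\)) sends the Manin relation submodule of \(V_{d-a_m}(R_m)\) (resp.\ \(V_{d-b_m}(R_m)\)) into that of \(V_d(R_m)\). The action \(P\mapsto P\mid\gamma\) is given by substitution, so it is multiplicative: \((QP)\mid\gamma=(Q\mid\gamma)(P\mid\gamma)\). Both \(S\) and \(U\) have determinant \(1\), so Lemma~\ref{lem:dickson-transform} gives \(A_m\mid\gamma=A_m\) and \(B_m\mid\gamma=B_m\) in \(R_m[X,Y]\) for \(\gamma\in\{S,U,U^2\}\). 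Hence
\[
A_m\cdot\bigl(P\mid(1+S)\bigr)=(A_mP)\mid(1+S),
\]
and the analogous identity holds for \(1+U+U^2\) and for \(B_m\). This shows that the relation submodules are mapped into one another, so \(\Psi_d\) descends to the quotients.

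\emph{Hecke equivariance.} Fix \(n\) with \(p\nmid n\). Every \(M\in\mathcal H_n\) has \(\det M=n\), so Lemma~\ref{lem:dickson-transform} gives \(A_m\mid M=A_m\) and \(B_m\mid M=\chi_m(n)B_m\). Using Merel's formula \(T_n[P]=\sum_{M\in\mathcal H_n}[P\mid M]\) on \(V_d(R_m)\), we compute
\[
T_n[A_mF+B_mG]=\sum_{M\in\mathcal H_n}\bigl[A_m(F\mid M)+\chi_m(n)B_m(G\mid M)\bigr]=\Psi_d\bigl(T_n[F],\chi_m(n)T_n[G]\bigr).
\]
The one point that needs care is that the individual maps \(P\mapsto P\mid M\) do not descend to the quotient; only the full sum does. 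This causes no difficulty. We choose representatives \(F,G\in V(R_m)\) and compute at the level of polynomials. The sum \(\sum_M(F\mid M)\) represents \(T_n[F]\) in \(\mathbb M_{d-a_m}(R_m)\), since Merel's operator is defined integrally on the coinvariants. Applying the already established well-definedness of \(\Psi_d\) to that class then yields \eqref{eq:twisted-hecke-equivariance}. A further subtlety is that Merel's operator is defined over any ring, so we need the formula over \(R_m\). We can take it by base change from \(\Z\): the Manin presentation is compatible with \(\otimes R_m\), since coinvariants are right exact.

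\emph{Surjectivity.} Proposition~\ref{prop:surjective-coefficient-transfer} shows that \(\Psi_d\) is surjective on \(V\)'s over \(\Z_p\) when \(d\geq a_m+b_m\). Tensoring with \(R_m\) preserves surjectivity, so \(V_{d-a_m}(R_m)\oplus V_{d-b_m}(R_m)\to V_d(R_m)\) is onto. Composing with the quotient map \(V_d(R_m)\to\mathbb M_d(R_m)\), which is also onto, gives surjectivity on modular symbols.

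The main obstacle is the bookkeeping around the Hecke sum not being termwise well defined on the quotient, together with ensuring that Lemma~\ref{lem:dickson-transform} holds as an identity in \(R_m[X,Y]\) and not merely modulo \(p\). The latter is exactly the binomial-theorem lifting already provided in that lemma, so we simply cite it.
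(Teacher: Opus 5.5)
Your proposal is correct and follows essentially the paper's own proof. Descent to the Manin quotients and the twisted equivariance both come from multiplicativity of the substitution action together with Lemma~\ref{lem:dickson-transform} (determinant $1$ for $S,U$, determinant $n$ for the Heilbronn--Merel matrices), and surjectivity comes from Proposition~\ref{prop:surjective-coefficient-transfer} followed by passage to the quotients; your remarks on choosing representatives for the Hecke sum and on base change to $R_m$ only make explicit what the paper uses implicitly.
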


\begin{proof} For \[F\in V_{d-a_m}(R_m), \qquad G\in V_{d-b_m}(R_m), \] Lemma~\ref{lem:dickson-transform} gives \begin{equation}\begin{aligned} \Psi_d(F,G)\mid\gamma &= (A_mF+B_mG)\mid\gamma\\ &= (A_m\mid\gamma)(F\mid\gamma) + (B_m\mid\gamma)(G\mid\gamma)\\ &= A_m(F\mid\gamma) +\chi_m(\det\gamma)B_m(G\mid\gamma)\\ &= \Psi_d\bigl(F\mid\gamma, \chi_m(\det\gamma)(G\mid\gamma) \bigr).\end{aligned}\label{eq:coefficient-equivariance}\end{equation}

Since \[\det S=\det U=1, \] it follows that \[\Psi_d(F,G)\mid S = \Psi_d(F\mid S,G\mid S) \] and \[ \Psi_d(F,G)\mid U = \Psi_d(F\mid U,G\mid U). \] Hence \(\Psi_d\) preserves the Manin relations, and therefore descends to \[ \Psi_d: \mathbb M_{d-a_m}(R_m)\oplus \mathbb M_{d-b_m}(R_m) \longrightarrow \mathbb M_d(R_m). \]

Now let \(p\nmid n\). Every matrix \(M\in\mathcal H_n\) has determinant \(n\), so \eqref{eq:coefficient-equivariance} gives \[ \begin{aligned} T_n\Psi_d(F,G) &= \sum_{M\in\mathcal H_n} \Psi_d(F,G)\mid M\\ &= \sum_{M\in\mathcal H_n} \Psi_d(F\mid M,\chi_m(n)(G\mid M))\\ &= \Psi_d\bigl(T_nF,\chi_m(n)T_nG\bigr). \end{aligned} \] This proves \eqref{eq:twisted-hecke-equivariance}.

Finally, if \[d\geq p^{m-1}(p^2+1),\] then the coefficient multiplication map is surjective by Proposition~\ref{prop:surjective-coefficient-transfer}. Hence the
induced map on the Manin quotients is also surjective.
\end{proof}

\subsection{For odd \texorpdfstring{\(p\)}{p}}

In this subsection, suppose that \(p\) is odd. For \(\epsilon\in\{+1,-1\}\), the idempotents \[e^\epsilon:=\frac{1+\epsilon\iota}{2}\] are defined over \(\Z_p\) and \(R_m\). We have \begin{equation}\begin{aligned} \mathbb M_d^\epsilon(R_m) &=e^\epsilon\mathbb M_d(R_m),\\ P_d^\epsilon &=\left(\mathbb M_d^{\mathrm{tf}}\right)^\epsilon =e^\epsilon\mathbb M_d^{\mathrm{tf}},\\ \left( \mathbb M_d^{\mathrm{tf}}/p^m\mathbb M_d^{\mathrm{tf}} \right)^\epsilon &= e^\epsilon\left(
\mathbb M_d^{\mathrm{tf}}/p^m\mathbb M_d^{\mathrm{tf}} \right) = P_d^\epsilon/p^mP_d^\epsilon. \end{aligned} \end{equation} Moreover, \begin{equation} \mathbb M_d(R_m) = \mathbb M_d^+(R_m)\oplus\mathbb M_d^-(R_m), \label{eq:pm-decomp} \end{equation} with \(e^\epsilon\) acting as the identity on \(\mathbb M_d^\epsilon(R_m)\).

Since \(p^{m-1}\) is odd, Lemma~\ref{lem:dickson-transform} gives \begin{equation}A_m\mid\iota=A_m, \qquad B_m\mid\iota=-B_m\label{eq:am-bm-iota}\end{equation} in \(R_m[X,Y].\) Thus multiplication by \(A_m\) preserves the \(\iota\)-sign, whereas multiplication by \(B_m\) reverses it.

\begin{proposition}\label{prop:plus-transfer} Suppose that \(p\) is odd. For each \[ \epsilon\in\{+1,-1\},\] there is a map \[ \Psi_d^\epsilon: \mathbb M_{d-a_m}^ {\epsilon}(R_m) \oplus \mathbb M_{d-b_m}^{-\epsilon}(R_m) \rightarrow \mathbb M_d^{\epsilon}(R_m), \] satisfying \begin{equation} T_n\circ\Psi_d^\epsilon =  \Psi_d^\epsilon \bigl(T_n\oplus\chi_m(n)T_n\bigr) \qquad ((n,p)=1).  \label{eq:signed-manin-transfer} \end{equation} Moreover, if \[ d\geq p^{m-1}(p^2+1),\] then $\Psi^\epsilon_d$ is surjective. In particular, for \(\epsilon=+1\), composing with the quotient map to \(P^+_d/p^mP^+_d\) gives a surjection \begin{equation} \Phi_d: \mathbb M_{d-a_m}^{+}(R_m) \oplus \mathbb M_{d-b_m}^{-}(R_m) \twoheadrightarrow P_d^+/p^mP_d^+.  \label{eq:plus-target-transfer} \end{equation}
\end{proposition}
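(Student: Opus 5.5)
The plan is to obtain \(\Psi_d^\epsilon\) by restricting the map \(\Psi_d\) of Proposition~\ref{prop:transfer-map-surjective} to the appropriate sign components, using the decomposition \eqref{eq:pm-decomp}. Everything will follow from how \(\Psi_d\) interacts with the involution \(\iota\). By \eqref{eq:coefficient-equivariance} with \(\gamma=\iota\), which is legitimate since \(\det\iota=-1\) is prime to \(p\), together with \eqref{eq:am-bm-iota}, we get
\[
\Psi_d(F,G)\mid\iota=\Psi_d\bigl(F\mid\iota,\,-(G\mid\iota)\bigr)
\]
at the level of coefficient modules. This uses \(\chi_m(-1)=(-1)^{p^{m-1}}=-1\), since \(p\) is odd. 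The first step is to check that this identity descends to the Manin quotients. The involution on \(\mathbb M_d(R_m)\) is induced by \(\iota\), which normalizes \(\Gamma\); in the Manin presentation it is given by \([P]\mapsto[P\mid\iota]\), possibly up to a convention sign that is the same in every degree. Granting this, we obtain \(\iota\circ\Psi_d=\Psi_d\circ(\iota\oplus-\iota)\) on \(\mathbb M_{d-a_m}(R_m)\oplus\mathbb M_{d-b_m}(R_m)\).

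From that relation, \(\Psi_d\) maps \(\mathbb M_{d-a_m}^\epsilon(R_m)\oplus\mathbb M_{d-b_m}^{-\epsilon}(R_m)\) into \(\mathbb M_d^\epsilon(R_m)\), and we take \(\Psi_d^\epsilon\) to be this restriction. Equivalently, \(\Psi_d\circ(e^\epsilon\oplus e^{-\epsilon})=e^\epsilon\circ\Psi_d\). The Hecke relation \eqref{eq:signed-manin-transfer} is then just \eqref{eq:twisted-hecke-equivariance} restricted to these components. This is valid because \(T_n\) commutes with \(\iota\) and so preserves each sign component.

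For surjectivity, assume \(d\geq p^{m-1}(p^2+1)\). Take \(x\in\mathbb M_d^\epsilon(R_m)\). By Proposition~\ref{prop:transfer-map-surjective} we can write \(x=\Psi_d(y,z)\) for some \(y,z\). Then
\[
x=e^\epsilon x=\Psi_d(e^\epsilon y,\,e^{-\epsilon}z),
\]
so \(x\) lies in the image of \(\Psi_d^\epsilon\). The idempotents \(e^{\pm}\) are available because \(2\) is invertible in \(R_m\).

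For \(\Phi_d\), we compose \(\Psi_d^+\) with the natural map \(\mathbb M_d(R_m)\to\mathbb M_d^{\mathrm{tf}}(\Z)\otimes R_m=\mathbb M_d^{\mathrm{tf}}/p^m\mathbb M_d^{\mathrm{tf}}\). This map is surjective, since it is the reduction of the surjection \(\mathbb M_d(\Z)\to\mathbb M_d^{\mathrm{tf}}(\Z)\) tensored with \(R_m\). It is also \(\iota\)-equivariant, so it commutes with \(e^+\). It therefore carries \(\mathbb M_d^+(R_m)=e^+\mathbb M_d(R_m)\) onto \(e^+(\mathbb M_d^{\mathrm{tf}}/p^m)=P_d^+/p^mP_d^+\), by the displayed identifications at the start of the subsection. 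Composing two surjections gives \eqref{eq:plus-target-transfer}.

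The main obstacle is the first step: making sure that \(\iota\) really acts on the Manin presentation by \(P\mapsto P\mid\iota\), or at least by a formula compatible with multiplication by \(A_m\) and \(B_m\). Since \(\iota\notin\Gamma\), I would verify this through the coinvariant definition. On \(\operatorname{St}_R\otimes V_d\), \(\iota\) acts diagonally and the product structure is visibly respected. The identification with the Manin quotient sends \(\{0,\infty\}\otimes P\mapsto[P]\), and \(\iota\) fixes \(\{0,\infty\}\) (up to the standard sign), so the formula holds with the same sign convention in all degrees. That sign cancels in the comparison above.
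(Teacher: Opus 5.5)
Your proposal is correct and follows the paper's proof. You restrict \(\Psi_d\) using the identity \(e^\epsilon\Psi_d=\Psi_d(e^\epsilon\oplus e^{-\epsilon})\), which comes from \eqref{eq:am-bm-iota}, inherit the twisted Hecke equivariance and surjectivity, and compose with the reduction to \(\mathbb M_d^{\mathrm{tf}}/p^m\mathbb M_d^{\mathrm{tf}}\). Your checks that \(\iota\) acts on the Manin quotient by \([P]\mapsto[P\mid\iota]\), and that surjectivity survives the restriction via \(x=e^\epsilon x\), fill in steps the paper leaves implicit.
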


\begin{proof} For \(F \in \mathbb M_{d-a_m}(R_m)\) and \(G \in \mathbb M_{d-b_m}(R_m)\), we have, by \eqref{eq:am-bm-iota}, \[ e^\epsilon\Psi_d(F,G) = \Psi_d(e^\epsilon F,e^{-\epsilon}G). \] Thus, defining $\Psi^\epsilon_d$ as the restriction of $\Psi_d$ to $\mathbb M^\epsilon_{d-a_m}(R_m) \oplus \mathbb M^{-\epsilon}_{d-b_m}(R_m)$, we see that the image of $\Psi_d^\epsilon$ lands in $\mathbb M^\epsilon_d(R_m)$. By Proposition~\ref{prop:transfer-map-surjective}, it is twisted Hecke-equivariant, and surjective if $d \geq p^{m-1}(p^2 + 1).$

Composing with the natural surjection
\[ \mathbb M_d(R_m) \twoheadrightarrow \mathbb M_d^{\mathrm{tf}}/p^m\mathbb M_d^{\mathrm{tf}} \] and applying $e^+$ gives \eqref{eq:plus-target-transfer}. 
\end{proof}

\subsection{Propagation of Hecke operator relations}

The transfer maps above allow relations on Hecke operators in lower coefficient degrees to be propagated to higher degrees.

By abuse of notation, we extend the character \(\chi_m\) to an endomorphism of the polynomial algebra in the prime-to-\(p\) Hecke operators by \[\chi_m(T_n):=\chi_m(n)T_n=n^{p^{m-1}}T_n \qquad ((n,p)=1). \]

Let \(F\) be a polynomial, with coefficients in \(R_m\), in finitely many prime-to-\(p\) Hecke operators. For \[\epsilon\in\{+1,-1\},\] the twisted Hecke equivariance of Proposition~\ref{prop:plus-transfer} gives \begin{equation} F\circ\Psi_d^\epsilon = \Psi_d^\epsilon \bigl(F\oplus\chi_m(F)\bigr).  \label{eq:two-branch-relation-equivariance} \end{equation}

\begin{corollary}[One-step propagation] \label{cor:one-step-propagation} Suppose that \(p\) is odd and \[ d\geq p^{m-1}(p^2+1). \] Let \(\epsilon\in\{+1,-1\}\). If \[F\mathbb M_{d-a_m}^{\epsilon}(R_m) = 0 \qquad \mbox{ and } \qquad \chi_m(F)\mathbb M_{d-b_m}^{-\epsilon}(R_m)= 0, \] then \[F\mathbb M_d^\epsilon(R_m)=0. \]

In particular, for \(\epsilon=+1\), \[F \left(P_d^+/p^mP_d^+\right)=0 \qquad \mbox { and } \qquad F\left(L_d^+(R_m)\right)=0. \]
\end{corollary}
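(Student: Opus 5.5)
The argument combines the surjectivity of the signed transfer map $\Psi_d^\epsilon$ with its twisted Hecke equivariance, both from Proposition~\ref{prop:plus-transfer}, and then passes to quotients and submodules for the second assertion.

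We begin with the statement about $\mathbb M_d^\epsilon(R_m)$. Since $d\geq p^{m-1}(p^2+1)$, Proposition~\ref{prop:plus-transfer} shows that
\[
\Psi_d^\epsilon:\mathbb M_{d-a_m}^{\epsilon}(R_m)\oplus\mathbb M_{d-b_m}^{-\epsilon}(R_m)\to\mathbb M_d^\epsilon(R_m)
\]
is surjective. Take any $x\in\mathbb M_d^\epsilon(R_m)$ and write $x=\Psi_d^\epsilon(u,v)$. The relation \eqref{eq:two-branch-relation-equivariance} gives
\[
Fx=\Psi_d^\epsilon\bigl(Fu,\chi_m(F)v\bigr)=\Psi_d^\epsilon(0,0)=0.
\]
The only point to check is that \eqref{eq:two-branch-relation-equivariance} holds for an arbitrary polynomial $F$, not just for single operators $T_n$. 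This follows by induction on monomials: $\chi_m$ is multiplicative on the commuting operators, and $\Psi^\epsilon_d$ is $R_m$-linear. Note that $\chi_m$ fixes the scalar coefficients, because it acts only on the $T_n$.

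Now let $\epsilon=+1$. The map $\Phi_d$ of \eqref{eq:plus-target-transfer} is $\Psi_d^+$ composed with the natural surjection
\[
\mathbb M_d^+(R_m)\to e^+\bigl(\mathbb M_d^{\mathrm{tf}}/p^m\mathbb M_d^{\mathrm{tf}}\bigr)=P_d^+/p^mP_d^+.
\]
This projection is Hecke-equivariant: the Hecke operators preserve torsion and commute with $\iota$, so they commute with $e^+$. Hence $P_d^+/p^mP_d^+$ is a Hecke-equivariant quotient of $\mathbb M_d^+(R_m)$, and $F$ kills it.

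For $L_d^+(R_m)$, we use saturation. Since $L_d^+$ is saturated in $P_d^+$, the quotient $P_d^+/L_d^+$ is torsion-free, hence flat over $\Z_p$. It follows that
\[
L_d^+(R_m)=L_d^+/p^mL_d^+\hookrightarrow P_d^+/p^mP_d^+
\]
is injective. This injection is Hecke-stable, because the Hecke operators preserve $L_d^+$. Therefore $F$ kills $L_d^+(R_m)$ as well.

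Nothing here is a serious obstacle. The steps needing the most care are the identification of $\mathbb M_d^+(R_m)\to P_d^+/p^mP_d^+$ as a Hecke-equivariant surjection, which requires surjectivity of $\mathbb M_d(R_m)\to\mathbb M^{\mathrm{tf}}_d\otimes R_m$ by right exactness of coinvariants, and the injectivity of $L_d^+(R_m)\to P_d^+/p^mP_d^+$ coming from saturation.
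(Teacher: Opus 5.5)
Your proposal is correct and follows the paper's own proof: surjectivity of \(\Psi_d^\epsilon\) combined with the twisted equivariance \eqref{eq:two-branch-relation-equivariance} gives the first assertion, and passing to the Hecke-equivariant quotient \(\mathbb M_d^+(R_m)\twoheadrightarrow P_d^+/p^mP_d^+\) and then restricting to \(L_d^+(R_m)\) gives the second. The paper leaves two details implicit, and you supply both: extending the equivariance from single operators \(T_n\) to arbitrary polynomials \(F\), and the injectivity of \(L_d^+/p^mL_d^+\to P_d^+/p^mP_d^+\) coming from saturation.
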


\begin{proof}
The first assertion follows from \eqref{eq:two-branch-relation-equivariance} and the surjectivity of \(\Psi_d^\epsilon\) granted by Proposition~\ref{prop:transfer-map-surjective}. The final assertion follows by passing to the quotient \[ \mathbb M_d^+(R_m) \twoheadrightarrow P_d^+/p^mP_d^+ \] and then restricting to \( L_d^+(R_m). \)
\end{proof}

\begin{theorem}[Weight propagation]\label{thm:weight-propagation} Suppose that \(p\) is odd, and let \(r\) be an even integer with \(0\leq r<2p^{m-1}. \) Let \(F\) be a polynomial, with coefficients in \(R_m\), in finitely many prime-to-\(p\) Hecke operators. For \( q \geq 0 \) put \[ \epsilon_q:=(-1)^q. \] 

Suppose that, for every \(q\), \[\chi_m^q(F)\mathbb M_{r+2jp^{m-1}}^{\epsilon_q}(R_m) = 0 \qquad \left( \frac{p+1}{2}\leq j<\frac{p^2+1}{2} \right). \] Then, for every \(j\geq \frac{p + 1}{2}\) and every \(q\), \[ \chi_m^q(F) \mathbb M_{r+2jp^{m-1}}^{\epsilon_q}(R_m) = 0. \]

Consequently, for every even \(d\geq p^{m-1}(p + 1)\) satisfying \( d\equiv r\pmod{2p^{m-1}}, \) \[F \left(P_d^+/p^m P_d^+\right) = 0 \qquad\text{and}\qquad F \left( L_d^+ / p^m L_d^+\right) = 0. \]
\end{theorem}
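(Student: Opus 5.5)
Write \(d_j:=r+2jp^{m-1}\). The plan is a strong induction on \(j\) that uses Corollary~\ref{cor:one-step-propagation}. The statement for \(j\) is: for every \(q\geq0\),
\[
\chi_m^q(F)\,\mathbb M_{d_j}^{\epsilon_q}(R_m)=0 .
\]
The hypothesis gives this for \(\tfrac{p+1}{2}\leq j<\tfrac{p^2+1}{2}\).

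Now take \(j\geq\tfrac{p^2+1}{2}\) and assume the statement for all smaller indices \(\geq\tfrac{p+1}{2}\). Then
\[
d_j\geq 2p^{m-1}\cdot\tfrac{p^2+1}{2}=p^{m-1}(p^2+1),
\]
so Corollary~\ref{cor:one-step-propagation} applies at degree \(d_j\), with the polynomial \(\chi_m^q(F)\) and the sign \(\epsilon_q\). The two degrees it involves are
\[
d_j-a_m=r+2\Bigl(j-\tfrac{p(p-1)}{2}\Bigr)p^{m-1},\qquad d_j-b_m=r+2\Bigl(j-\tfrac{p+1}{2}\Bigr)p^{m-1}.
\]
Both are of the form \(d_{j'}\), since \(a_m=p^m(p-1)=2p^{m-1}\cdot\tfrac{p(p-1)}{2}\) and \(b_m=2p^{m-1}\cdot\tfrac{p+1}{2}\), and \(p\) is odd.

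Let \(j'=j-\tfrac{p(p-1)}{2}\) and \(j''=j-\tfrac{p+1}{2}\). Both are smaller than \(j\). Both are also at least \(\tfrac{p+1}{2}\): because \(j\geq\tfrac{p^2+1}{2}\), we get \(j'\geq\tfrac{p+1}{2}\) and \(j''\geq\tfrac{p^2-p}{2}\geq\tfrac{p+1}{2}\) for \(p\geq3\).

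The corollary needs two inputs.
\begin{itemize}
\item \(\chi_m^q(F)\) must kill \(\mathbb M^{\epsilon_q}_{d_{j'}}(R_m)\). This is the induction hypothesis at \(j'\) with index \(q\).
\item \(\chi_m(\chi_m^q(F))=\chi_m^{q+1}(F)\) must kill \(\mathbb M^{-\epsilon_q}_{d_{j''}}(R_m)=\mathbb M^{\epsilon_{q+1}}_{d_{j''}}(R_m)\). This is the induction hypothesis at \(j''\) with index \(q+1\).
\end{itemize}
This is why the statement must be carried for all \(q\) at once: each step consumes index \(q+1\) to produce index \(q\). Since \(\chi_m(n)=n^{p^{m-1}}\) has finite order on \(R_m^\times\), only finitely many distinct \(\chi_m^q(F)\) occur, but the induction does not even need that. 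The corollary then gives the statement at \(j\).

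For the consequence, every even \(d\geq p^{m-1}(p+1)\) with \(d\equiv r\pmod{2p^{m-1}}\) equals \(d_j\) for some \(j\geq\tfrac{p+1}{2}\). Taking \(q=0\) gives \(F\,\mathbb M_d^+(R_m)=0\).

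Next, \(\mathbb M_d^+(R_m)=e^+\mathbb M_d(R_m)\) surjects onto \(e^+\bigl(\mathbb M_d^{\mathrm{tf}}/p^m\bigr)=P_d^+/p^mP_d^+\). This map is Hecke equivariant, since the Hecke operators commute with \(\iota\) and with passage to the torsion-free quotient. Hence \(F\) kills \(P_d^+/p^mP_d^+\).

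Finally, \(L_d^+\) is saturated in \(P_d^+\), so \(L_d^+/p^mL_d^+\hookrightarrow P_d^+/p^mP_d^+\) is injective and Hecke stable. Therefore \(F\) also kills \(L_d^+/p^mL_d^+\).

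The main point needing care is the index bookkeeping in the induction. One has to check that both shifted indices stay in the range \(j\geq\tfrac{p+1}{2}\) already established, which is exactly why the base window is \(\tfrac{p+1}{2}\leq j<\tfrac{p^2+1}{2}\). One also has to check that the sign flip under \(B_m\) matches the shift \(q\mapsto q+1\).
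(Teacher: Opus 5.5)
Your proposal is correct and follows the paper's proof essentially verbatim. It uses the same strong induction on \(j\) with the shifts \(p(p-1)/2\) and \((p+1)/2\) coming from \(a_m\) and \(b_m\), the same check that both shifted indices stay \(\geq(p+1)/2\) once \(j\geq(p^2+1)/2\), the same consumption of index \(q+1\) on the \(B_m\)-branch, and the same passage at \(q=0\) from \(\mathbb M_d^+(R_m)\) to \(P_d^+/p^mP_d^+\) and then, by saturation, to \(L_d^+/p^mL_d^+\).
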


\begin{proof} For ease of reading, let us introduce \[u := \frac{p(p-1)}{2}, \qquad v := \frac{p+1}{2}, \qquad J := u + v = \frac{p^2 + 1}{2}.\] Then \[a_m = 2p^{m-1}u, \qquad b_m = 2p^{m-1}v.\] Write \[d_j := r + 2p^{m-1}j.\] Then the transfer map can be written as \[\Psi^\epsilon_{d_j}: \mathbb M_{d_{j-u}}^\epsilon(R_m) \oplus \mathbb M_{d_{j-v}}^{-\epsilon}(R_m) \rightarrow \mathbb M_{d_j}^\epsilon(R_m). \] In this notation, the surjectivity condition of Proposition~\ref{prop:plus-transfer} can be written as \(j \geq J.\)

We argue by strong induction on \(j\). There is nothing to prove for \(v\leq j< J. \) Suppose that \( j\geq J. \) We quickly check that \[j - u \geq J - u=v, \qquad j- v\geq J-v=u\geq v.\] By Proposition~\ref{prop:plus-transfer}, \[\Psi_{d_j}^{\epsilon_q}:\mathbb M_{d_{j-u}}^{\epsilon_q}(R_m) \oplus \mathbb M_{d_{j-v}}^{-\epsilon_q}(R_m)
\twoheadrightarrow \mathbb M_{d_j}^{\epsilon_q}(R_m) \] is surjective. Recall that the second summand in the domain carries the twist \(\chi_m\).

By the induction hypothesis,
\[ \chi_m^q(F) \mathbb M_{d_{j-u}}^{\epsilon_q}(R_m)=0 \] and \[ \chi_m^{q+1}(F) \mathbb M_{d_{j-v}}^{\epsilon_{q+1}}(R_m)=0. \] By Corollary~\ref{cor:one-step-propagation}, \[ \chi_m^q(F)\mathbb M_{d_j}^{\epsilon_q}(R_m)=0, \] finishing the induction.

Taking \(q=0\) gives the assertion on \(\mathbb M_d^+(R_m)\), and hence on \(P_d^+/p^mP_d^+\) and \(L_d^+ / p^m L_d^+\). 
\end{proof}

\begin{remark}\label{rem:periodic-weight-propagation} More generally, the Hecke relation may depend periodically on the coefficient degree. The same induction applies to a finite family \(F_d\), provided that the family is closed under the two transitions \[ (d,F_d,\epsilon) \longmapsto (d-a_m,F_d,\epsilon) \] and \[ (d,F_d,\epsilon) \longmapsto (d-b_m,\chi_m(F_d),-\epsilon). \] 
\end{remark}

For \(p=2\), the determinant twist is trivial. We state the simpler propagation statement.

\begin{theorem}[Weight propagation for \(p=2\)] \label{thm:weight-propagation-p2} Suppose that \(p=2\) and \(m\geq2\). Let \(r\) be an even integer with \[0\leq r<2^{m-1}, \] and let \(F\) be a polynomial, with coefficients in \(R_m\), in finitely many odd Hecke operators. Suppose that \[F\mathbb M_{r+j2^{m-1}}(R_m) = 0 \qquad (2\leq j\leq4). \] Then \[F\mathbb M_{r+j2^{m-1}}(R_m) = 0 \] for every \(j\geq 2\).

Consequently, for every even \(d\geq 2^{m}\) satisfying \(d\equiv r\pmod{2^{m-1}},\) \[F \left(P_d^+ / 2^m P_d^+\right) = 0 \qquad\text{and}\qquad F\left( L_d^+ / 2^m L_d^+\right) = 0. \]
\end{theorem}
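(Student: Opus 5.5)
The plan is to run the same strong induction as in Theorem~\ref{thm:weight-propagation}. For \(p=2\) it simplifies: there is no sign splitting and the determinant twist disappears.

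\textbf{Numerology and the twist.} Here
\[
a_m=2^m(2-1)=2\cdot 2^{m-1},\qquad b_m=2^{m-1}\cdot 3=3\cdot 2^{m-1},\qquad a_m+b_m=5\cdot 2^{m-1}.
\]
Put \(d_j:=r+j2^{m-1}\); these are even since \(r\) is even and \(m\geq2\). Then \(\Psi_{d_j}\) of Proposition~\ref{prop:transfer-map-surjective} maps
\[
\mathbb M_{d_{j-2}}(R_m)\oplus\mathbb M_{d_{j-3}}(R_m)\longrightarrow\mathbb M_{d_j}(R_m),
\]
and it is surjective as soon as \(j\geq5\).

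Next I check that \(\chi_m\) is trivial, i.e.\ \(n^{2^{m-1}}\equiv1\pmod{2^m}\) for odd \(n\). For \(m=2\) this is \(n^2\equiv1\pmod4\). For \(m\geq3\), the group \((\Z/2^m\Z)^\times\) has exponent \(2^{m-2}\), which divides \(2^{m-1}\). Hence \(\chi_m(F)=F\), and the twisted equivariance \eqref{eq:twisted-hecke-equivariance} becomes \(F\circ\Psi_d=\Psi_d\circ(F\oplus F)\).

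\textbf{Induction.} I induct strongly on \(j\). The cases \(2\leq j\leq4\) are the hypothesis. For \(j\geq5\) we have \(j-2\geq3\geq2\) and \(j-3\geq2\), so by induction \(F\) kills both summands of the domain of \(\Psi_{d_j}\). By equivariance and surjectivity, \(F\mathbb M_{d_j}(R_m)=F\Psi_{d_j}(\cdots)=\Psi_{d_j}(F\cdots)=0\).

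\textbf{Consequence.} Let \(d\geq2^m\) be even with \(d\equiv r\pmod{2^{m-1}}\). Then \(d=d_j\) for some \(j\geq2\), so \(F\) kills \(\mathbb M_d(R_m)\). It therefore kills the quotient \(\mathbb M_d(R_m)\twoheadrightarrow\mathbb M_d^{\mathrm{tf}}/2^m\mathbb M_d^{\mathrm{tf}}\); this quotient map is Hecke-equivariant because it is induced from the integral, torsion-killing projection.

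By saturation (Section~\ref{sec:target-lattice}), \(P_d^+/2^mP_d^+\hookrightarrow\mathbb M_d^{\mathrm{tf}}/2^m\mathbb M_d^{\mathrm{tf}}\) and \(L_d^+/2^mL_d^+\hookrightarrow P_d^+/2^mP_d^+\) are Hecke-stable submodules. So \(F\) kills both of them.

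\textbf{Main obstacle.} The only point needing care is the \(p=2\) subtlety that \(\iota\)-eigenspaces cannot be cut out by idempotents. I avoid this by working with the full module \(\mathbb M_d\) throughout and passing to \(P_d^+\) only at the end, via the saturated inclusion rather than a projection. The remaining steps are the routine parity and triviality checks on \(\chi_m\) given above.
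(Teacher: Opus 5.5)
Your proof is correct and follows the paper's own argument, which is only the one line ``strong induction on \(j\), using the degree shifts \(a_m=2^m\), \(b_m=3\cdot2^{m-1}\).'' You fill in the details that line leaves implicit: the shifts are \(2\) and \(3\) in units of \(2^{m-1}\), with surjectivity for \(j\geq5\); \(\chi_m\) is trivial on odd \(n\); and the final claims follow because \(P_d^+/2^mP_d^+\) and \(L_d^+/2^mL_d^+\) embed, by saturation and Hecke-equivariantly, into \(\mathbb M_d^{\mathrm{tf}}/2^m\mathbb M_d^{\mathrm{tf}}\).
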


\begin{proof} By strong induction on \(j\), using the degree shifts \[ a_m=2^m, \qquad b_m=3\cdot2^{m-1}. \]
\end{proof}

\subsection{Calculus of linear relations}

To obtain higher congruences, we need polynomial relations in divided Hecke operators such as \(T/p^\alpha\). Division need not define an endomorphism on the finite modular-symbol modules used in the transfer argument. We therefore encode it by the linear relation \(Tx=p^\alpha y\). Such relations pass through the transfer maps and yield ordinary operator congruences after passage to the free lattice.

We use the basic calculus of linear relations developed by Arens \cite[\S~2]{Arens1961}, who also observes that its elementary constructions extend to modules over a ring. We introduce presentations of relations to make their compatibility with module homomorphisms explicit. Let $R$ be a commutative ring. If $M$ is an $R$-module, we define an \emph{$R$-linear relation on $M$} to be an $R$-submodule of $M \oplus M$. We denote by $\operatorname{LinRel}_R(M)$ the set of all $R$-linear relations on $M$. Any $R$-linear map $T \in \operatorname{End}_R(M)$ gives rise to such a relation via its graph \[\Gamma_T := \{(x,Tx): x \in M\}.\]

If $\mathcal A \in \operatorname{LinRel}_R(M)$ and $x,y \in M$, write $x \mathcal A y$ to mean that $(x,y) \in \mathcal A$. For a sequence $x_0, x_1, \ldots x_n \in M$, write \[x_0 \mathcal A x_1 \mathcal A \ldots \mathcal A x_n\] to mean that \[x_0 \mathcal A x_1, \qquad x_1 \mathcal A x_2, \qquad \ldots \qquad x_{n-1} \mathcal A x_n.\] Let \[\mathcal A(x) := \{y \in M: (x, y) \in \mathcal{A}\}.\] Define the domain of $\mathcal A$ as \[\operatorname{dom}\mathcal A := \{x \in M: \mathcal A(x) \not = \emptyset\}.\]

If $\mathcal A, \mathcal B \in \operatorname{LinRel}_R(M)$ and $r \in R$, define \[\mathcal A + \mathcal B := \{(x,y+z): (x,y) \in \mathcal A, (x,z) \in \mathcal B\},\] \[r \mathcal A = \{(x,ry): (x,y) \in \mathcal A\},\] \[\mathcal A \mathcal B = \{(x,z): \exists y \in M, (x,y) \in \mathcal B, (y,z) \in \mathcal A\}.\] 

If $f: M \rightarrow N$ is $R$-linear, we get a map $f_\ast: \operatorname{LinRel}_R(M) \rightarrow \operatorname{LinRel}_R(N)$ defined as follows: we have an $R$-linear map \[f \oplus f: M \oplus M \rightarrow N \oplus N,\] and for every $\mathcal A \in \operatorname{LinRel}_R(M)$, we can define \[f_\ast(\mathcal A) := (f \oplus f )(\mathcal A).\]

Now we make a more universal definition. Define a \emph{presentation of a linear relation over $R$} to be a triple $\mathfrak P := (P, p_0, p_1)$ of an $R$-module $P$ with two distinguished elements $p_0, p_1 \in P$. For the connection with pp-definable relations when \(P\) is finitely presented, see \cite{Prest2009}. If $\mathfrak P$ is a presentation of a linear relation over $R$ and $M$ is an $R$-module, let \[\mathfrak P(M) := \{(\phi(p_0), \phi(p_1)): \phi \in \Hom_R(P, M)\}.\] Thus $\mathfrak P(M) \in \operatorname{LinRel}_R(M)$. Moreover, if $f: M \rightarrow N$ is $R$-linear, then \[f_\ast \mathfrak P(M) \subset \mathfrak P(N).\] Consequently, \[ f\bigl(\operatorname{dom}\mathfrak P(M)\bigr) \subseteq \operatorname{dom}\mathfrak P(N). \] In particular, if \(f\) is surjective and \(\operatorname{dom}\mathfrak P(M)=M\), then \(\operatorname{dom}\mathfrak P(N)=N\). Presentations also commute with finite direct sums: \[ \mathfrak P(M_1\oplus M_2) = \mathfrak P(M_1)\oplus\mathfrak P(M_2), \] \[\operatorname{dom}\mathfrak P(M_1 \oplus M_2) = \operatorname{dom}\mathfrak{P}(M_1) \oplus \operatorname{dom} \mathfrak P(M_2).\] 

We now define the sum, product, and scalar action on presentations of linear relations over $R$. Let $\mathfrak P = (P, p_0, p_1)$ and $\mathfrak Q = (Q, q_0, q_1)$ be two such relations. Define \[\mathfrak P + \mathfrak Q := \left(\frac{P \oplus Q}{R(p_0, - q_0)}, \bar p_0 = \bar q_0, \overline{p_1 + q_1} \right)\] where bars denote images in the quotient. Also define \[\mathfrak P \mathfrak Q := \left(\frac{Q\oplus P}{R(q_1, - p_0)}, \bar q_0, \bar p_1 \right).\] For $r \in R$, define $r\mathfrak P := (P, p_0, rp_1)$.  Then, for every $M$, \[(\mathfrak P + \mathfrak Q)(M) = \mathfrak P(M) + \mathfrak Q(M),\] \[(\mathfrak P \mathfrak Q)(M) = \mathfrak P(M) \mathfrak Q(M), \] \[(r\mathfrak P)(M) = r \mathfrak P(M). \] Finally, let \[\mathfrak 1_R := (R, 1, 1).\] Then there are canonical isomorphisms \[\mathfrak P \mathfrak 1_R \cong \mathfrak 1_R \mathfrak P \cong \mathfrak P.\] Let us remark that we can make these definitions functorially. Consider the functor \[\mathscr U: \operatorname{R-Mod} \rightarrow \operatorname{R-Mod}, \qquad \mathscr U(M) := M \oplus M.\] Then a \emph{linear relation $\mathscr{P}$ over $R$} is a subfunctor $\mathscr P \subset \mathscr U$. If $\mathfrak P = (P, p_0, p_1)$ is a presentation of a linear relation over $R$, then the image subfunctor \[\mathscr P_{\mathfrak P} := \im \left(\Hom_R(P, -{}) \rightarrow \mathscr U\right)\] is a linear relation over $R$. Note however that not every linear relation will have a presentation. 

We now specify how to evaluate a polynomial at linear relations. For a linear relation \(\mathcal A\) on \(M\), put \[ \mathcal A^0:=\Gamma_{\mathrm{id}_M}, \qquad \mathcal A^{n+1}:=\mathcal A\mathcal A^n. \] Thus, for \(n\geq1\), \[ \mathcal A^n(x_0) = \{x_n\in M: \exists x_1,\ldots,x_{n-1}\in M,\  x_0\mathcal A x_1\mathcal A\cdots\mathcal A x_n\}. \] 

For a nonzero polynomial \[ F(X)=\sum_{j=0}^{n}c_jX^j\in R[X], \] define \[ F(\mathcal A):=\sum_{j:c_j\neq0}c_j\mathcal A^j. \] For the zero polynomial, put \(F(\mathcal A):=\Gamma_0\). More generally, if \[ F(X_1,\ldots,X_s) =\sum_{\boldsymbol j}c_{\boldsymbol j} X_1^{j_1}\cdots X_s^{j_s}, \] we define \[ F(\mathcal A_1,\ldots,\mathcal A_s) := \sum_{\boldsymbol j:c_{\boldsymbol j}\neq0} c_{\boldsymbol j} \mathcal A_1^{j_1}\cdots\mathcal A_s^{j_s}. \] The order of composition is the displayed order, with the rightmost relation applied first. Different monomials may use different intermediate elements. 

These definitions use the expanded polynomial. Evaluation at multivalued relations need not respect polynomial addition or multiplication; in particular, algebraic simplifications of expressions in relations are not automatically valid. For commuting endomorphisms \(T_1,\ldots,T_s\), however, \[ F(\Gamma_{T_1},\ldots,\Gamma_{T_s}) = \Gamma_{F(T_1,\ldots,T_s)}. \] 

For \(F(X)=\sum_{j=0}^n c_jX^j\), any common chain \[ x_0\mathcal A x_1\mathcal A\cdots\mathcal A x_n \] gives \[ \left(x_0,\sum_{j=0}^n c_jx_j\right)\in F(\mathcal A). \] Thus a computation using a common chain supplies an element of the polynomial relation defined above.

We will now define the \emph{divided operator} linear relation. Let $T,S \in R$. Define a presentation of a linear relation over $R$ by \[\mathfrak D_{T,S} := \left(\frac{R \oplus R}{R(Te_0 -Se_1)}, \bar e_0, \bar e_1\right), \qquad e_0 := (1,0), \qquad e_1 := (0,1).\] Then on an $R$-module $M$, \[\mathfrak D_{T,S}(M) = \{(x,y): Tx = Sy\}.\] If $\operatorname{dom} \mathfrak D_{T,S}(M) = M$, we will say that \emph{$M$ admits a division relation of $T$ by $S$} and symbolically write \[''\frac{T}{S}'' := \mathfrak D_{T,S}(M).\] 

We now specialize a bit more towards our intended applications. Let \(A\) be a commutative \(\mathbf Z_p\)-algebra, and let \(L\) be an \(A\)-module that is finite free over \(\mathbf Z_p\). Fix \(m\geq1\), and let \(M\) be an \(A/p^mA\)-module equipped with an \(A/p^mA\)-linear surjection \[ q:M\twoheadrightarrow L/p^mL. \] We apply the preceding definitions over the ring \(A/p^mA\).
Elements of \(A\) acting on \(M\) are understood through their images in \(A/p^mA\).

Let \(\mathcal B\) be a linear relation on \(M\), and let \(0\leq b\leq m\). We write \[ \mathcal B\equiv0\pmod{p^b}\quad\text{on }M \] if \[ \mathcal B(x)\cap p^bM\neq\varnothing \qquad\text{for every }x\in M. \] Equivalently, for every \(x\in M\), there exists \(y\in p^bM\) such that \((x,y)\in\mathcal B\). 

For an endomorphism \(T\) of \(M\), this definition gives \[ \Gamma_T\equiv0\pmod{p^b} \quad\Longleftrightarrow\quad TM\subseteq p^bM. \] More generally, for \(T\in A\), \[ \mathcal B\Gamma_T\equiv0\pmod{p^b} \] means that \[ \mathcal B(Tu)\cap p^bM\neq\varnothing \qquad\text{for every }u\in M. \]

For a presentation \(\mathfrak P\) over \(A/p^mA\), put \[ \mathfrak I:=\mathfrak D_{1,p^b}\mathfrak P. \] Then \[ \mathfrak I(M) = \{(x,\rho):\exists y\in\mathfrak P(M)(x),\ y=p^b\rho\}. \] Consequently, \[ \mathfrak P(M)\equiv0\pmod{p^b} \quad\Longleftrightarrow\quad \operatorname{dom}\mathfrak I(M)=M. \] Thus congruence conditions for presented relations pass through surjective module homomorphisms and hold on a finite direct sum whenever they hold on each summand.

\begin{proposition}[Division and retained precision] \label{prop:division-retained-precision} In the preceding setup, let \(T_1,\ldots,T_s\in A\), and let \(1\leq\alpha_i<m\). Put \[ \mathcal Z_i:=\mathfrak D_{T_i,p^{\alpha_i}}(M). \] For each \(i\), if \(\operatorname{dom}\mathcal Z_i=M\), then \[ T_iL\subseteq p^{\alpha_i}L. \] Consequently, \[ Z_i:=\frac{T_i}{p^{\alpha_i}} \] defines an endomorphism of \(L\).  
     
Suppose now that all the \(Z_i\) preserve \(L\), whether established this way or separately, and put \[ h:=m-\max_i\alpha_i. \] Let \(F\in\mathbf Z_p[X_1,\ldots,X_s]\), let \(S\in A\), and let \(0\leq b\leq h\). If \[ F(\mathcal Z_1,\ldots,\mathcal Z_s)\Gamma_S \equiv0\pmod{p^b}\quad\text{on }M, \] then \[ S F(Z_1,\ldots,Z_s)L\subseteq p^bL. \] Here the coefficients of \(F\) on the source are understood through their images in \(A/p^mA\).\end{proposition}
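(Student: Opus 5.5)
The plan is to transport everything from \(M\) to the free lattice \(L\) through \(q\), and to keep track of how much precision is lost each time a division relation is applied. Throughout, the only structural facts used are that \(L\) is finite free over \(\mathbf Z_p\), so multiplication by \(p^{\alpha}\) on \(L\) is injective, and that \(q\) is a surjective \(A/p^mA\)-linear map.

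\emph{Step 1: \(Z_i\) is an endomorphism of \(L\).} Let \(u\in L\). By surjectivity of \(q\), choose \(x\in M\) with \(q(x)=\bar u\), the class of \(u\) in \(L/p^mL\). Since \(\operatorname{dom}\mathcal Z_i=M\), there is \(y\in M\) with \(T_ix=p^{\alpha_i}y\). Applying \(q\) gives \(T_i\bar u=p^{\alpha_i}q(y)\) in \(L/p^mL\). Hence
\[
T_iu\in p^{\alpha_i}L+p^mL=p^{\alpha_i}L,
\]
using \(\alpha_i<m\). Because \(L\) is torsion free, \(Z_i:=T_i/p^{\alpha_i}\) is then a well-defined endomorphism of \(L\). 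It commutes with \(A\) and with the other \(Z_j\), since \(A\) is commutative and multiplication by \(p^{\alpha}\) on \(L\) is injective.

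\emph{Step 2: reduction of a single chain.} Fix \(u\in L\), choose \(x\in M\) with \(q(x)=\bar u\), and put \(x_0:=Sx\). The hypothesis gives \(y\in p^bM\) with \((x_0,y)\in F(\mathcal Z_1,\dots,\mathcal Z_s)\). Unwinding the definition of evaluation on the expanded polynomial, \(y=\sum_{\boldsymbol j}c_{\boldsymbol j}y_{\boldsymbol j}\). For each monomial, with its own intermediate elements, \(y_{\boldsymbol j}\) is the endpoint of a chain
\[
x_0=z_0,\ z_1,\ \dots,\ z_n=y_{\boldsymbol j}
\]
in \(M\), where \(T_{i_k}z_{k-1}=p^{\alpha_{i_k}}z_k\) and the indices \(i_k\) follow the displayed order of the monomial, rightmost applied first.

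Let \(w_k\in L\) be any lift of \(q(z_k)\). I will show by induction on \(k\) that
\[
w_k\equiv Z_{i_k}\cdots Z_{i_1}(Su)\pmod{p^h}.
\]
\begin{itemize}
\item For \(k=0\): \(q(z_0)=S\bar u\), so \(w_0\equiv Su\pmod{p^m}\).
\item For the inductive step: from \(T_{i_k}z_{k-1}=p^{\alpha_{i_k}}z_k\) we get \(p^{\alpha_{i_k}}Z_{i_k}w_{k-1}\equiv p^{\alpha_{i_k}}w_k\pmod{p^m}\). Torsion-freeness of \(L\) then gives \(w_k\equiv Z_{i_k}w_{k-1}\pmod{p^{m-\alpha_{i_k}}}\), and \(m-\alpha_{i_k}\geq h\). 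Since \(Z_{i_k}\) is an endomorphism of \(L\), it preserves the inductive congruence modulo \(p^h\), which closes the induction.
\end{itemize}

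\emph{Step 3: summing the monomials.} Summing with the coefficients \(c_{\boldsymbol j}\), any lift of \(q(y)\) is congruent to \(F(Z_1,\dots,Z_s)Su\) modulo \(p^h\). On the other hand, \(q(y)\in p^b(L/p^mL)\), so \(F(Z)Su\in p^bL+p^hL=p^bL\), because \(b\leq h\). The ordering of compositions inside each monomial matches the displayed order in both \(M\) and \(L\). Moreover, \(F(\Gamma_{Z_1},\dots)=\Gamma_{F(Z)}\) on \(L\), since the \(Z_i\) commute there.

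The main obstacle is the precision bookkeeping in Step 2. Division in \(M\) is multivalued, because \(M\) may have \(p\)-torsion, and different monomials may use unrelated intermediate elements. The point to verify carefully is that every such choice is still pinned down modulo \(p^{m-\alpha}\) after lifting to \(L\). It is also essential that the errors do not accumulate: the loss is a fixed \(\max_i\alpha_i\), not a multiple of the chain length. This holds because each step compares against the exact endomorphism \(Z_i\) on \(L\) rather than compounding divisions.
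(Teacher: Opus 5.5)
Your proposal is correct and follows essentially the same route as the paper's proof. The lifting argument gives \(T_iL\subseteq p^{\alpha_i}L\), and your chain-by-chain comparison of lifts modulo \(p^h\) is the paper's observation that \(q_h(y)=Z_iq_h(x)\) for every \((x,y)\in\mathcal Z_i\); it then concludes via \(F(Z_1,\ldots,Z_s)Su\in p^bL+p^hL=p^bL\) and the commutativity of \(S\) with the \(Z_i\) on \(L\).
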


\begin{proof} Suppose first that \(\operatorname{dom}\mathcal Z_i=M\). For \(u\in L\), choose \(x\in M\) with \(q(x)=u\bmod p^mL\), and choose \(y\in M\) satisfying \[ T_i x=p^{\alpha_i}y. \] Applying \(q\) gives \[ T_i u\in p^{\alpha_i}L+p^mL=p^{\alpha_i}L. \] This proves the first assertion.

Now assume that all the \(Z_i\) preserve \(L\). Let \[ q_h:M\twoheadrightarrow L/p^hL \] be the composite of \(q\) with reduction modulo \(p^h\). If \((x,y)\in\mathcal Z_i\), applying \(q\) and cancelling \(p^{\alpha_i}\) on the free lattice gives \[ q_h(y)=Z_iq_h(x), \] since \(h\leq m-\alpha_i\). Thus every choice of output of \(\mathcal Z_i\) has, after applying \(q_h\), the same value as the integral operator \(Z_i\). Following the intermediate elements in each monomial, we therefore obtain \[ q_h(y)=F(Z_1,\ldots,Z_s)q_h(x) \] whenever \[ (x,y)\in F(\mathcal Z_1,\ldots,\mathcal Z_s). \] For any \(u\in L\), choose \(x\in M\) lifting its class modulo \(p^mL\). By hypothesis, there is \(y\in p^bM\) such that \[ (Sx,y)\in F(\mathcal Z_1,\ldots,\mathcal Z_s). \] Consequently, \[ F(Z_1,\ldots,Z_s)Su\in p^bL+p^hL=p^bL. \] Since \(A\) is commutative, \(S,Z_1,\ldots,Z_s\) commute on \(L\otimes_{\mathbf Z_p}\mathbf Q_p\), and hence on \(L\). This gives the stated inclusion.
\end{proof}

\subsection{Propagation of divided Hecke relations}
\label{sec:divided-relation-propagation}

Fix \(m\geq1\), and assume \(m\geq2\) if \(p=2\). Recall that \(R_m=\mathbf Z/p^m\mathbf Z\), and put \[ \mathbb A:=\mathbf Z_p[t_n:(n,p)=1], \qquad \mathbb A_m:=\mathbb A/p^m\mathbb A =R_m[t_n:(n,p)=1]. \] For odd \(p\), put \(\epsilon_q=(-1)^q\). We regard \[ \mathbb M_d^{\epsilon_q}(R_m) \] as an \(\mathbb A_m\)-module by letting \(t_n\) act as \(\chi_m(n)^qT_n\). With these actions, the transfer map \[ \Psi_d^{\epsilon_q}: \mathbb M_{d-a_m}^{\epsilon_q}(R_m) \oplus \mathbb M_{d-b_m}^{\epsilon_{q+1}}(R_m) \longrightarrow \mathbb M_d^{\epsilon_q}(R_m) \] is \(\mathbb A_m\)-linear and is surjective when \(d\geq a_m+b_m\). Here the two source modules carry the actions associated with orientations \(q\) and \(q+1\), respectively. 

For \(p=2\), let \(t_n\) act as \(T_n\) on \(\mathbb M_d(R_m)\). The transfer map \[ \Psi_d: \mathbb M_{d-a_m}(R_m)\oplus\mathbb M_{d-b_m}(R_m) \longrightarrow\mathbb M_d(R_m) \] is \(\mathbb A_m\)-linear and surjective in the same range. 

\begin{proposition}[Propagation of presented relations] \label{prop:presented-relation-propagation} Let \(\mathfrak P\) be a presentation of a linear relation over \(\mathbb A_m\).
     
Suppose first that \(p\) is odd, and let \(r\) be an even integer with \(0\leq r<2p^{m-1}\). Assume that \[ \operatorname{dom}\mathfrak P \bigl(\mathbb M_{r+2jp^{m-1}}^{\epsilon_q}(R_m)\bigr) = \mathbb M_{r+2jp^{m-1}}^{\epsilon_q}(R_m) \] for every \(q\geq0\) and every integer \(j\) satisfying \[ \frac{p+1}{2}\leq j<\frac{p^2+1}{2}, \] with the \(\mathbb A_m\)-actions specified above. Then the same equality holds for every \(j\geq(p+1)/2\) and every \(q\geq0\).

For \(p=2\), let \(r\) be an even integer with \(0\leq r<2^{m-1}\). If \[ \operatorname{dom}\mathfrak P \bigl(\mathbb M_{r+j2^{m-1}}(R_m)\bigr) = \mathbb M_{r+j2^{m-1}}(R_m) \qquad(2\leq j\leq4), \] then the same equality holds for every \(j\geq2\). 

In both cases, the same conclusion holds with the full-domain condition replaced by the congruence \[ \mathfrak P \bigl(\mathbb M_{r+2jp^{m-1}}^{\epsilon_q}(R_m)\bigr) \equiv0\pmod{p^b}, \] or its unsplit analogue for \(p=2\), where \(0\leq b\leq m\).
\end{proposition}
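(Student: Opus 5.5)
The argument is the same strong induction on \(j\) used in Theorem~\ref{thm:weight-propagation}. Operator annihilation is replaced by the full-domain property of \(\mathfrak P\), and Corollary~\ref{cor:one-step-propagation} is replaced by the functoriality of presentations recorded in the calculus of linear relations. The basic transfer mechanism is this: if \(f:M\to N\) is \(\mathbb A_m\)-linear and surjective and \(\operatorname{dom}\mathfrak P(M)=M\), then \(\operatorname{dom}\mathfrak P(N)=N\). Moreover, \(\operatorname{dom}\mathfrak P(M_1\oplus M_2)=\operatorname{dom}\mathfrak P(M_1)\oplus\operatorname{dom}\mathfrak P(M_2)\). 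Both facts were noted when presentations were introduced.

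For odd \(p\), I would keep the notation \(u=p(p-1)/2\), \(v=(p+1)/2\), \(J=u+v\) and \(d_j=r+2p^{m-1}j\). I prove, by strong induction on \(j\geq v\), that the full-domain property holds on \(\mathbb M_{d_j}^{\epsilon_q}(R_m)\) simultaneously for all \(q\geq0\), with \(t_n\) acting as \(\chi_m(n)^qT_n\). For \(v\leq j<J\) this is the hypothesis.

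For \(j\geq J\), note that \(j-u\geq v\) and \(j-v\geq u\geq v\), so the induction hypothesis applies to \(\mathbb M_{d_{j-u}}^{\epsilon_q}(R_m)\) with orientation \(q\) and to \(\mathbb M_{d_{j-v}}^{\epsilon_{q+1}}(R_m)\) with orientation \(q+1\). By the direct-sum property, \(\mathfrak P\) then has full domain on the source of \(\Psi_{d_j}^{\epsilon_q}\). The key check is that \(\Psi_{d_j}^{\epsilon_q}\) is \(\mathbb A_m\)-linear for these actions. By \eqref{eq:signed-manin-transfer}, \(\chi_m(n)^qT_n\circ\Psi=\Psi\circ(\chi_m(n)^qT_n\oplus\chi_m(n)^{q+1}T_n)\), which is exactly the stated module structures. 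Since \(d_j\geq a_m+b_m\) precisely when \(j\geq J\), Proposition~\ref{prop:plus-transfer} makes \(\Psi_{d_j}^{\epsilon_q}\) surjective, and transfer along this surjection closes the induction.

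For \(p=2\) the argument is identical but unsplit, with no twist, using \(a_m=2^m=2\cdot2^{m-1}\), \(b_m=3\cdot2^{m-1}\) and the surjectivity of \(\Psi_d\) from Proposition~\ref{prop:transfer-map-surjective} for \(d\geq 5\cdot 2^{m-1}\), that is, \(j\geq5\). For \(j\geq5\) we have \(j-2\geq3\) and \(j-3\geq2\), so both sources are covered by the induction, and the base range \(2\leq j\leq4\) suffices.

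For the congruence variant, I apply the full-domain statement to the presentation \(\mathfrak I=\mathfrak D_{1,p^b}\mathfrak P\). This is again a presentation over \(\mathbb A_m\), and we saw that \(\mathfrak P(M)\equiv0\pmod{p^b}\) if and only if \(\operatorname{dom}\mathfrak I(M)=M\). The only point I expect to need care is that \(\mathfrak D_{1,p^b}\), formed with \(T=1\) and \(S=p^b\) in \(\mathbb A_m\), is a legitimate presentation over \(\mathbb A_m\), so that the product \(\mathfrak D_{1,p^b}\mathfrak P\) makes sense; this is immediate from the definitions. I expect the main obstacle to be nothing deeper than this bookkeeping, together with the verification of the orientation-shifted \(\mathbb A_m\)-linearity above.
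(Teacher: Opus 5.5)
Your proposal is correct and is essentially the paper's own argument. The paper runs the same strong induction on \(j\) as in Theorems~\ref{thm:weight-propagation} and~\ref{thm:weight-propagation-p2}: full domain passes to the direct sum of the two source modules (in orientations \(q\) and \(q+1\)) and then through the surjective \(\mathbb A_m\)-linear transfer map, and the congruence version follows by applying the same argument to \(\mathfrak D_{1,p^b}\mathfrak P\). Your explicit checks of the orientation shift and of the index bounds \(j-u,\,j-v\geq v\) (respectively \(j-2,\,j-3\geq2\) for \(p=2\)) simply spell out details the paper leaves implicit.
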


\begin{proof} Use the same strong induction on \(j\) as in Theorems~\ref{thm:weight-propagation} and~\ref{thm:weight-propagation-p2}. At each step, the induction hypothesis gives full domain on both source modules, hence on their direct sum. Full domain then passes through the surjective transfer map. 

For congruences, apply the same argument to the presentation \[ \mathfrak D_{1,p^b}\mathfrak P. \] 
\end{proof}  

As \(r\) varies, the initial degrees are exactly the even degrees in the ranges \[ b_m\leq d<a_m+b_m \qquad(p\text{ odd}), \] and \[ 2^m\leq d<5\cdot2^{m-1} \qquad(p=2). \] For odd \(p\), the specified module structures are periodic in \(q\) with period \(p-1\), so only finitely many orientations need to be checked.

As in Remark~\ref{rem:periodic-weight-propagation}, the presentation may depend on degree residues and orientation. The same induction applies provided that, at each transfer step, the source conditions are those of the target presentation evaluated on the two source modules with their respective \(\mathbb A_m\)-actions. Thus any family closed under the two degree transitions propagates in the same way. One may likewise restrict to degree residues closed under these transitions. 

To obtain operator congruences, take orientation \(q=0\) and use \[ \mathbb M_d^+(R_m) \twoheadrightarrow P_d^+/p^mP_d^+ \qquad(p\text{ odd}), \] or \[\mathbb M_d(R_m) \twoheadrightarrow \mathbb M_d^{\mathrm{tf}}/2^m\mathbb M_d^{\mathrm{tf}} \qquad(p=2). \] Give the free lattices their \(\mathbb A\)-module structures by letting \(t_n\) act as \(T_n\). Proposition~\ref{prop:division-retained-precision}, applied with \(A=\mathbb A\), then establishes integrality of the divided operators and the resulting polynomial congruences on the free lattice, under its stated hypotheses.

These conclusions restrict to \(L_d^+\). Indeed, the numerator Hecke operators preserve \(L_d^+\), and saturation implies that their integral divided operators preserve it as well. For \(p=2\), these operators also commute with \(\iota\), by cancellation on the torsion-free module, and hence preserve \(P_d^+\). Finally, saturation gives \[ L_d^+\cap p^bP_d^+=p^bL_d^+ \] for odd \(p\), and \[ L_d^+\cap 2^b\mathbb M_d^{\mathrm{tf}}=2^bL_d^+ \] for \(p=2\).

Relations with prescribed inputs are included in the same argument. For example, \[ F(\mathcal Z_1,\ldots,\mathcal Z_s)\Gamma_S \equiv0\pmod{p^b} \] is a presented relation, with the action of \(S\) twisted along with the numerator operators.

\subsection{Recursive computation of Manin quotients and ideal images}
The transfer map allows a recursive and fast computation of the Manin quotients. We describe the procedure here. Fix \(m\geq1\), assuming \(m\geq2\) if \(p=2\). Let $d \geq 0$ be even and $R_m = \Z/p^m Z$. We use the convention that \(V_d(R_m) = 0\) if \(d < 0\). 

Suppose $d < a_m + b_m$. By the proof of Proposition~\ref{prop:surjective-coefficient-transfer}, the transfer map \[\Psi_d: V_{d-a_m}(R_m)\oplus V_{d-b_m}(R_m) \longrightarrow V_d(R_m)\] is injective. In fact, its reduction over $\F_p$ is injective, hence it is split injective. Thus there exists a free \(R_m\)-submodule \(W_d\subseteq V_d(R_m)\) such that \begin{equation}\label{eq:recursive-coefficient-decomposition}V_d(R_m) = A_mV_{d-a_m}(R_m) \oplus B_mV_{d-b_m}(R_m) \oplus W_d. \end{equation} The complement \(W_d\) need not be stable under \(\operatorname{SL}_2(\Z)\). Since \(W_d\) is a direct summand of a finite free module over the local ring \(R_m\), it is itself finite free.

Put \[ H_d:= \mathbb M_{d-a_m}(R_m)\oplus \mathbb M_{d-b_m}(R_m). \] Define an \(R_m\)-linear surjection \[ \rho_d:V_d(R_m)\longrightarrow H_d\oplus W_d, \qquad A_mF+B_mG+w\longmapsto([F],[G],w). \] Let \(\mathcal R_d\) be the \(R_m\)-submodule generated by \[ \rho_d(w+w\mid S), \qquad \rho_d(w+w\mid U+w\mid U^2), \] as \(w\) ranges over a basis of \(W_d\).  

\begin{proposition}\label{prop:recursive-manin-presentation} The map \[ ([F],[G],w)\longmapsto[A_mF+B_mG+w] \] induces an isomorphism of \(R_m\)-modules \[ (H_d\oplus W_d)/\mathcal R_d \simeq \mathbb M_d(R_m). \]
\end{proposition}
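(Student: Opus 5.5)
The plan is to compare two presentations of the same quotient. By the Manin presentation, \(\mathbb M_d(R_m)=V_d(R_m)/\mathcal N_d\), where
\[
\mathcal N_d:=V_d(R_m)(1+S)+V_d(R_m)(1+U+U^2).
\]
Since \(\rho_d\) is an \(R_m\)-linear surjection, the claimed isomorphism is equivalent to the identity of submodules
\[
\rho_d^{-1}(\mathcal R_d)=\mathcal N_d+\ker\rho_d ,
\]
together with the inclusion \(\ker\rho_d\subseteq\mathcal N_d\). Equivalently, we must show \(\rho_d(\mathcal N_d)=\mathcal R_d\) and \(\ker\rho_d\subseteq\mathcal N_d\). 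Once these are established, the map \(V_d\to\mathbb M_d\) factors through \(\rho_d\) and induces an isomorphism \((H_d\oplus W_d)/\rho_d(\mathcal N_d)\simeq V_d/\mathcal N_d\). Its inverse is precisely the displayed map \(([F],[G],w)\mapsto[A_mF+B_mG+w]\), which is well defined because \(\Psi_d\) descends to Manin quotients (Proposition~\ref{prop:transfer-map-surjective}).

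First I check that \(\ker\rho_d\subseteq\mathcal N_d\). By the direct sum decomposition \eqref{eq:recursive-coefficient-decomposition} and the injectivity of \(\Psi_d\) for \(d<a_m+b_m\), the kernel of \(\rho_d\) consists of the elements \(A_mF+B_mG\) with \(F\in\mathcal N_{d-a_m}\) and \(G\in\mathcal N_{d-b_m}\). The identity \eqref{eq:coefficient-equivariance} with \(\det S=\det U=1\) gives
\[
A_m\,F'(1+S)=(A_mF')(1+S),\qquad A_m\,F'(1+U+U^2)=(A_mF')(1+U+U^2),
\]
and the same with \(B_m\) in place of \(A_m\). Hence \(A_m\mathcal N_{d-a_m}+B_m\mathcal N_{d-b_m}\subseteq\mathcal N_d\), which is the required inclusion.

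Next I compute \(\rho_d(\mathcal N_d)\). The module \(\mathcal N_d\) is spanned by the elements \(v(1+S)\) and \(v(1+U+U^2)\), where \(v\) ranges over spanning sets of the three summands. Consider first \(v=A_mF\). The same equivariance shows that \(v(1+S)=A_m\,F(1+S)\), and \(\rho_d\) sends this to \(([F(1+S)],0,0)=0\), since \(F(1+S)\in\mathcal N_{d-a_m}\). The element \(v(1+U+U^2)\) is handled identically, and so are the generators with \(v=B_mG\). Thus the only contributions come from \(v=w\) ranging over a basis of \(W_d\). By \(R_m\)-linearity, these contributions span exactly \(\mathcal R_d\). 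Therefore \(\rho_d(\mathcal N_d)=\mathcal R_d\).

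The main point needing care is the first step: one must know that \(\ker\rho_d\) is exactly \(A_m\mathcal N_{d-a_m}\oplus B_m\mathcal N_{d-b_m}\). This uses the directness of the sum in \eqref{eq:recursive-coefficient-decomposition}, which in turn relies on split injectivity over \(R_m\) (the injectivity mod \(p\) from the proof of Proposition~\ref{prop:surjective-coefficient-transfer}, lifted via Nakayama/freeness). I would state this explicitly. The remaining steps are formal.
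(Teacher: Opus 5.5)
Your proof is correct and follows the paper's argument: the paper also shows $\ker\rho_d=A_mN_{d-a_m}+B_mN_{d-b_m}\subseteq N_d$, using the invariance of $A_m,B_m$ under $S$ and $U$, and $\rho_d(N_d)=\mathcal R_d$, and then concludes with $V_d/N_d\simeq (V_d/\ker\rho_d)/(N_d/\ker\rho_d)$. Your explicit note that the directness of the decomposition, coming from the split injectivity of $\Psi_d$ for $d<a_m+b_m$, is what makes $\rho_d$ well defined and identifies its kernel is exactly the input the paper takes from the discussion preceding the proposition.
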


\begin{proof}
For each nonnegative degree \(e\), put \[ N_e:=V_e(R_m)(1+S)+V_e(R_m)(1+U+U^2). \] Terms involving negative degrees below are omitted. By \eqref{eq:recursive-coefficient-decomposition} and
the definition of \(\rho_d\), we have \[ \ker\rho_d=A_mN_{d-a_m}+B_mN_{d-b_m}. \] Since \(A_m\) and \(B_m\) are invariant under \(S\) and \(U\), \[ N_d= A_mN_{d-a_m}+B_mN_{d-b_m} +W_d(1+S)+W_d(1+U+U^2). \] Consequently, \[ \ker\rho_d\subseteq N_d, \qquad \rho_d(N_d)=\mathcal R_d. \] Therefore \[ \mathbb M_d(R_m) =\frac{V_d(R_m)}{N_d} \simeq \frac{V_d(R_m)/\ker\rho_d}{N_d/\ker\rho_d} \simeq \frac{H_d\oplus W_d}{\mathcal R_d}.\]
\end{proof}

The Hecke actions can also be computed recursively. For \((n,p)=1\), define an endomorphism of \(H_d\oplus W_d\) by \[ \widetilde T_n(x,y,w) = \bigl(T_nx,\chi_m(n)T_ny,0\bigr) + \rho_d\left(\sum_{\gamma\in\mathcal H_n}w\mid\gamma\right). \] By twisted Hecke equivariance of the transfer map, this endomorphism preserves \(\mathcal R_d\) and induces \(T_n\) on the quotient of Proposition~\ref{prop:recursive-manin-presentation}. Thus only the Hecke images of a basis of \(W_d\) need to be computed directly. 

For odd \(p\), the construction is compatible with the plus/minus decomposition. Applying the projections \(e^\pm\), we may choose complements separately in the two eigenspaces: \[ V_d(R_m)^\epsilon = A_mV_{d-a_m}(R_m)^\epsilon \oplus B_mV_{d-b_m}(R_m)^{-\epsilon} \oplus W_d^\epsilon. \] Taking \(W_d=W_d^+\oplus W_d^-\) gives an \(\iota\)-stable complement. Since \(A_m\mid\iota=A_m\) and \(B_m\mid\iota=-B_m\), the map \(\rho_d\) is equivariant for the involution \[ (x,y,w)\longmapsto(\iota x,-\iota y,\iota w) \] on \(H_d\oplus W_d\). In particular, \(\mathcal R_d\) is stable under this involution. Applying \(e^\epsilon=(1+\epsilon\iota)/2\) therefore gives \[ \mathbb M_d^\epsilon(R_m) \simeq \frac{ \mathbb M_{d-a_m}^\epsilon(R_m) \oplus \mathbb M_{d-b_m}^{-\epsilon}(R_m) \oplus W_d^\epsilon }{ e^\epsilon\mathcal R_d }. \] To compute \(e^\epsilon\mathcal R_d\), form the relations from a basis of all of \(W_d\) and then project. In particular, the \(U\)-relations must include starting vectors of both signs. The induced Hecke operators preserve these signed quotients. For \(p=2\), we use the unsplit construction. 

The same construction permits recursive verification of presented linear relations. Give the first lower quotient the usual \(\mathbb A_m\)-action and the second the action \(t_n\mapsto\chi_m(n)T_n\). The maps induced by multiplication by \(A_m\) and \(B_m\) are then \(\mathbb A_m\)-linear.  Let \(\mathfrak P\) be a presentation over \(\mathbb A_m\). If \(\mathfrak P\) has full domain on both lower quotients, functoriality shows that their images lie in \[ \operatorname{dom} \mathfrak P\bigl(\mathbb M_d(R_m)\bigr). \] It therefore suffices to check \[ [w]\in\operatorname{dom} \mathfrak P\bigl(\mathbb M_d(R_m)\bigr) \] for each vector \(w\) in a basis of \(W_d\). Indeed, this domain is a submodule, and these classes together with the transferred generators generate \(\mathbb M_d(R_m)\). Likewise, a congruence \[ \mathfrak P\bigl(\mathbb M_d(R_m)\bigr) \equiv0\pmod{p^b} \] can be verified recursively by applying this argument to the presentation \(\mathfrak D_{1,p^b}\mathfrak P\).

The preceding constructions also apply to ideal images. Let \(I\subseteq\mathbb A_m\) be an ideal. For odd \(p\), give \(\mathbb M_d^\epsilon(R_m)\) and \(\mathbb M_{d-a_m}^\epsilon (R_m)\) the action \(t_n\mapsto\chi_m(n)^qT_n\), and \(\mathbb M_{d-b_m}^{-\epsilon}\) the action \(t_n\mapsto\chi_m(n)^{q+1}T_n\). The transfer map restricts to \[ \Psi_d^\epsilon: I\mathbb M_{d-a_m}^\epsilon(R_m) \oplus I\mathbb M_{d-b_m}^{-\epsilon}(R_m) \longrightarrow I\mathbb M_d^\epsilon(R_m). \] This restriction is surjective when \(d\geq a_m+b_m\). Indeed, the transfer is \(\mathbb A_m\)-linear with the specified actions, and a surjective \(\mathbb A_m\)-linear map \(f:M\to N\) satisfies \(f(IM)=IN\). For \(p=2\), the same statement holds for the unsplit modules, with no twist.

Since taking ideal images also commutes with finite direct sums, the same induction proves propagation of ordinary operator relations and full-domain conditions for presented relations on these ideal images, with the initial degree ranges of Proposition~\ref{prop:presented-relation-propagation}. Families depending on degree and orientation must satisfy the same transition compatibility conditions. Congruences of presented relations are interpreted within the ideal images, so the terminal submodule is \(p^bI\mathbb M_d^\epsilon(R_m)\). 

For odd \(p\) and \(d<a_m+b_m\), the recursive presentation gives \[ \begin{aligned} I\mathbb M_d^\epsilon(R_m) ={}& \Psi_d^\epsilon\left( I\mathbb M_{d-a_m}^\epsilon(R_m) \oplus I\mathbb M_{d-b_m}^{-\epsilon}(R_m) \right)\\ &+\sum_{f\in I}f[W_d^\epsilon], \end{aligned} \] where \([W_d^\epsilon]\) denotes the image of \(W_d^\epsilon\) in the Manin quotient, and \(f\) acts through the target orientation. If \(I=(f_1,\ldots,f_s)\), this equality becomes \[ \begin{aligned} I\mathbb M_d^\epsilon(R_m) ={}& \Psi_d^\epsilon\left( I\mathbb M_{d-a_m}^\epsilon(R_m) \oplus I\mathbb M_{d-b_m}^{-\epsilon}(R_m) \right)\\ &+\sum_{j=1}^s f_j[W_d^\epsilon]. \end{aligned} \] For \(p=2\), use the corresponding unsplit construction.

For recursive verification of a presented relation \(\mathfrak P\), suppose it has full domain on both lower ideal images. It then suffices to check \[ f_j[w]\in \operatorname{dom}\mathfrak P \bigl(I\mathbb M_d^\epsilon(R_m)\bigr) \] for each ideal generator \(f_j\) and each vector \(w\) in a basis of \(W_d^\epsilon\). Congruences are treated by applying the same argument to \(\mathfrak D_{1,p^b}\mathfrak P\).

\part{Computations and classifications}

Throughout this part, let \(f\) be a normalized cuspidal level-one eigenform of weight \(k\), and let \(\cO_f\) be the ring of integers of a finite extension of $\Q_p$ containing the coefficients of \(f\). We put
\[d:=k-2, \qquad r\equiv d\pmod{p^{m-1}(p-1)}, \qquad 0 \leq r < p^{m-1}(p-1).\]

\section{The prime \texorpdfstring{\(2\)}{2}: classification modulo \texorpdfstring{\(256\)}{256}}
\label{sec:p2}

For every even residue \(r\bmod128\), put \[ c_{3,r}:=1+3^{r+1}\pmod{256}, \qquad c_{5,r}:=1+5^{r+1}\pmod{256}, \] and define \[ X_r:=T_3-c_{3,r}. \] Also put \[ \varepsilon_r:= \begin{cases} 1,&r\equiv0,6,8,14\pmod{16},\\ 0,&r\equiv2,4,10,12\pmod{16}. \end{cases} \] Although \(r\) is taken modulo \(128\), these quantities depend only on \(r\bmod64\).

\subsection{Certified Hecke operator identities}

Put \[ F_r(Z):=Z^2-\varepsilon_rZ. \] The computation is performed at precision \(256\).  For \(m=8\), the Dickson multipliers have degrees \[ a_8=256, \qquad b_8=384, \qquad \mbox{ with } a_8 + b_8 = 640. \] Hence the exact induction base in Proposition~\ref{prop:presented-relation-propagation} consists, for every even \(0\leq r<128\), of the three degrees \[ d=r+j\,128, \qquad 2\leq j\leq4. \] Equivalently, these are the \(192\) even degrees \[ 256\leq d\leq638. \] 

For every such degree \(d\), put \[ \mathcal Z_r:= ''\frac{t_3 - c_{3,r}}{2^7}'' = \mathfrak D_{t_3-c_{3,r},\,2^7} \bigl(\mathbb M_d(\mathbf Z/256\mathbf Z)\bigr), \] where \(r\equiv d\pmod{128}\) and \(t_3\) acts as \(T_3\). The computation \cite[\href{https://github.com/nrustom/hecke-congruences/blob/main/playground_mod_256.ipynb}{\nolinkurl{playground_mod_256.ipynb}}]{RustomHeckeCongruences} verifies \[ \operatorname{dom}\mathcal Z_r =\mathbb M_d(\mathbf Z/256\mathbf Z), \qquad F_r(\mathcal Z_r)\equiv0\pmod2, \] and also verifies the ordinary relation \[ \bigl(T_5-c_{5,r}\bigr) \mathbb M_d(\mathbf Z/256\mathbf Z)=0. \] Since both \(a_8\) and \(b_8\) are divisible by \(128\), the quantities \(c_{3,r}\), \(c_{5,r}\), and \(\varepsilon_r\) are unchanged under the two transitions of Remark~\ref{rem:periodic-weight-propagation}.  Thus these families satisfy the compatibility required for propagation.

Proposition~\ref{prop:presented-relation-propagation} propagates these conditions to every even degree \[ d\geq256. \] The remaining even degrees \[ 0\leq d<256 \] are verified directly. By Proposition~\ref{prop:division-retained-precision}, with \(m=8\), \(\alpha_1=7\), \(b=1\), and \(S=1\), for \[ r\equiv d\pmod{128}, \] the operator \[ Z_r:=\frac{T_3-c_{3,r}}{2^7} \] belongs to \[ \End_{\mathbf Z_2}\bigl(\mathbb M_d^{\mathrm{tf}}\bigr), \] preserves \(P_d^+\) and \(L_d^+\), and satisfies \begin{equation} \bigl(Z_r^2-\varepsilon_rZ_r\bigr) \mathbb M_d^{\mathrm{tf}} \subseteq2\mathbb M_d^{\mathrm{tf}}, \qquad \bigl(Z_r^2-\varepsilon_rZ_r\bigr)L_d^+ \subseteq2L_d^+. \label{eq:p2-divided-relation}\end{equation}

Likewise, Theorem~\ref{thm:weight-propagation-p2}, together with the direct verification in the lower degrees, gives \begin{equation} \bigl(T_5-c_{5,r}\bigr)L_d^+ \subseteq256L_d^+. \label{eq:p2-T5-relation} \end{equation} Thus both relations hold in every even coefficient degree \(d\).

\subsection{Congruences for \texorpdfstring{\(a_3\)}{a3} and
\texorpdfstring{\(a_5\)}{a5}}

By the Eichler--Shimura comparison recalled in Section~\ref{sec:target-lattice}, choose a primitive simultaneous eigenvector \[ v_f\in L_d^+\otimes_{\mathbf Z_2}\mathcal O_f \] for the prime-to-\(2\) Hecke eigensystem of \(f\). 

Since \(Z_r\) preserves \(L_d^+\), its eigenvalue on \(v_f\) is integral.  Thus \[ z_3:=\frac{a_3(f)-c_{3,r}}{2^7}\in\mathcal O_f. \] Evaluating \eqref{eq:p2-divided-relation} on \(v_f\) gives \[ z_3^2-\varepsilon_rz_3\in2\mathcal O_f. \]

If \(\varepsilon_r=0\), then \[ z_3^2\in2\mathcal O_f, \] so \(v_2(z_3)>0\).  Hence \[ v_2\bigl(a_3(f)-c_{3,r}\bigr)>7, \] which is precisely \[ a_3(f)\equiv_{\mathrm{val}}c_{3,r}\pmod{256}. \]

If \(\varepsilon_r=1\), then \[ z_3(z_3-1)\in2\mathcal O_f. \] The two factors differ by a unit, so exactly one of them belongs to the maximal ideal of \(\mathcal O_f\).  Therefore \[z_3\equiv0\quad\text{or}\quad1 \pmod{\mathfrak m_{\mathcal O_f}}, \] and consequently \[ a_3(f)\equiv_{\mathrm{val}} c_{3,r}\quad\text{or}\quad c_{3,r}+2^7 \pmod{256}. \]

Finally, evaluating \eqref{eq:p2-T5-relation} on \(v_f\) gives the literal, and hence valuative, congruence \[ a_5(f)\equiv_{\mathrm{lit}}c_{5,r}\pmod{256}. \] Thus \begin{equation} a_3(f)\equiv_{\mathrm{val}} \begin{cases} c_{3,r}\pmod{256},&\varepsilon_r=0,\\ c_{3,r}\ \text{or}\ c_{3,r}+128\pmod{256},  &\varepsilon_r=1, \end{cases} \qquad a_5(f)\equiv_{\mathrm{lit}}c_{5,r}\pmod{256}. \label{eq:p2-coordinate-classification} \end{equation}

\subsection{Final classification}

\begin{proof}[Proof of Theorem~\ref{thm:intro-p2}] For \(r\equiv k-2\pmod{128}\), direct calculation gives
\[
\begin{array}{c|cc}
 & a_3 & a_5\\ \hline
\theta^{64}G_k
 & c_{3,r} & c_{5,r}\\
\theta^{16}G_{k+32}
 & c_{3,r}+128 & c_{5,r}
\end{array}
\qquad\pmod{256}.
\]
Thus \eqref{eq:p2-coordinate-classification} selects \(\theta^{64}G_k\) when \(\varepsilon_r=0\), and exactly one of the two twists when \(\varepsilon_r=1\). 

Write the selected twist as \(\theta^iG_\kappa\). By Lemma~\ref{lem:weak-cyclotomic}, its reduction modulo \(256\) is a weak eigenform of weight congruent to \(\kappa+2i\equiv k\pmod{64}\). It therefore agrees with \(f\) at the weight operator \([5]\), as well as at \(T_3\) and \(T_5\), modulo \(256\) in the valuative sense. Since \[ \mathbb T_{\mathrm{dc}}^{(2)} =\overline{\mathbf Z_2[T_3,T_5,[5]]}, \] the two prime-to-\(2\) Hecke eigensystems agree. As \(n^{64}\equiv1\pmod{256}\) for odd \(n\), and both twists vanish modulo \(256\) at even indices, this gives \[ \theta^{64}f \equiv_{\mathrm{val}}\theta^iG_\kappa\pmod{256}. \] 

Conversely, for every even weight residue modulo \(64\), the computation supplies a normalized cuspidal level-one eigenform realizing each permitted pair of \(T_3\)- and \(T_5\)-values in that weight residue. The same completed-Hecke generation argument proves agreement of the entire prime-to-\(2\) eigensystems.

Different weight residues are distinguished by \[ a_3(f)^2-a_9(f)=3^{k-1}, \] since \(3\) has order \(64\) modulo \(256\). Within a weight residue, the two permitted systems, when both occur, are distinguished by their \(T_3\)-values. There are \(32+16=48\) permitted systems altogether, and every one is realized.
\end{proof}

\section{The prime \texorpdfstring{\(3\)}{3}: classification modulo \texorpdfstring{\(81\)}{81}} \label{sec:p3} 

Throughout this section, put \[ r\equiv d=k-2\pmod{54},\qquad 0\leq r<54. \] We call \(r\equiv2\pmod6\), equivalently \(k\equiv4\pmod6\), the \emph{zero branch}, and \(r\equiv0,4\pmod6\), equivalently \(k\equiv0,2\pmod6\), the \emph{nonzero branch}. As a consequence of the calculations below, these branches are also characterized by \[\begin{aligned} \text{zero branch} &\Longleftrightarrow a_2(f)\in9\mathcal O_f,\\ \text{nonzero branch} &\Longleftrightarrow a_2(f)\notin9\mathcal O_f. \end{aligned} \]

\subsection{Certified Hecke operator identities}

\subsubsection{The common \(T_7\) relation}

Let \(t_r\in\{0,\ldots,80\}\) represent \(1+7^{r+1}\pmod{81}\). We work at precision \(81\), where \[ a_4=162,\qquad b_4=108,\qquad a_4+b_4=270. \] For every even degree \(0\leq d<270\) and both orientations \(q\in\{0,1\}\), put \[ \mathcal N_{r,q}:= ''\frac{t_7-t_r}{27}'' = \mathfrak D_{t_7-t_r,\,27} \bigl(\mathbb M_d^{\epsilon_q}(\mathbf Z/81\mathbf Z)\bigr), \qquad \epsilon_q=(-1)^q. \] Here \(t_7\) acts as \(T_7\), since \(\chi_4(7)=7^{27}\equiv1\pmod{81}\). The computation \cite[\href{https://github.com/nrustom/hecke-congruences/blob/main/playground_mod_81.ipynb}{\nolinkurl{playground_mod_81.ipynb}}] {RustomHeckeCongruences} verifies \[ \operatorname{dom}\mathcal N_{r,q} =\mathbb M_d^{\epsilon_q}(\mathbf Z/81\mathbf Z), \qquad \mathcal N_{r,q}^{\,2}\equiv0\pmod3. \]

Both degree shifts preserve \(r\pmod{54}\). Proposition~\ref{prop:presented-relation-propagation}, together with the direct verification in degrees \(d<108\), therefore propagates these conditions to every even degree. Proposition~\ref{prop:division-retained-precision}, with \(m=4\), \(\alpha_1=3\), \(b=1\), and \(S=1\), gives \[ N_r:=\frac{T_7-t_r}{27}\in\End_{\mathbf Z_3}(P_d^+), \qquad N_r^2P_d^+\subseteq3P_d^+. \] These conclusions restrict to \(L_d^+\) by saturation.

\subsubsection{The nonzero branch}

Suppose \(r\equiv0,4\pmod6\). Define \(c_r\) by \begin{equation}
\begin{array}{c|rrrrrrrrrrrrrrrrrr}
r
&0&4&6&10&12&16&18&22&24&28&30&34&36&40&42&46&48&52\\
\hline
c_r
&1&4&4&1&7&-2&10&-5&13&-8&16&16&-8&13&-5&10&-2&7
\end{array}
\label{eq:p3-nonzero-constants}
\end{equation}
Put \(F(X):=X^4-2X^3+X^2\). We work at precision \(243\), where \[ a_5=486,\qquad b_5=324,\qquad a_5+b_5=810. \] For every even degree \[ 0\leq d<810,\qquad d\equiv r\pmod{54}, \] and both orientations \(q\in\{0,1\}\), put \[ \mathcal Q_{r,q}:= ''\frac{t_2^2-9c_r}{81}'' = \mathfrak D_{t_2^2-9c_r,\,81} \bigl(\mathbb M_d^{\epsilon_q}(\mathbf Z/243\mathbf Z)\bigr), \qquad \epsilon_q=(-1)^q, \] where \(t_2\) acts as \(\chi_5(2)^qT_2=(-1)^qT_2\). The computation \cite[\href{https://github.com/nrustom/hecke-congruences/blob/main/playground_mod_81.ipynb}{\nolinkurl{playground_mod_81.ipynb}}]{RustomHeckeCongruences} verifies \[ \operatorname{dom}\mathcal Q_{r,q} =\mathbb M_d^{\epsilon_q}(\mathbf Z/243\mathbf Z), \qquad F(\mathcal Q_{r,q})\equiv0\pmod3. \]

Both degree shifts preserve \(r\pmod{54}\), and the twist leaves \(t_2^2\) unchanged. Hence Proposition~\ref{prop:presented-relation-propagation}, together with the direct verification in degrees \(d<324\), propagates these conditions to every even degree \(d\equiv r\pmod{54}\). Proposition~\ref{prop:division-retained-precision}, with \(m=5\), \(\alpha_1=4\), \(b=1\), and \(S=1\), gives \[ Q_r:=\frac{T_2^2-9c_r}{81} \in\End_{\mathbf Z_3}(P_d^+), \qquad F(Q_r)P_d^+\subseteq3P_d^+. \] By saturation, these conclusions restrict to \(L_d^+\): \begin{equation} Q_rL_d^+\subseteq L_d^+, \qquad \bigl(Q_r(Q_r-1)\bigr)^2L_d^+\subseteq3L_d^+. \label{eq:p3-nonzero-relations} \end{equation}

\subsubsection{The zero branch}

Suppose \(r\equiv2\pmod6\). We work at precision \(2187\), where \[ a_7=4374,\qquad b_7=2916,\qquad a_7+b_7=7290. \] Put \(I=(9,t_2)\subseteq\mathbb A\). For every degree \[ 0\leq d<7290,\qquad d\equiv r\pmod{54}, \] and both orientations \(q\in\{0,1\}\), we work on \[ I\mathbb M_d^{\epsilon_q}(\mathbf Z/2187\mathbf Z), \qquad \epsilon_q=(-1)^q, \] where \(t_2\) acts as \(\chi_7(2)^qT_2=(-1)^qT_2\).

Define the polynomials \(h_r(X)\in\mathbf Z[X]\) by \[ \begin{array}{c|c} r & h_r(X)\\ \hline 2,8,20,26,38,44 & 1-X^2\\ 14 & 2+5X\\ 32,50 & 2+X^2 \end{array} \] and put \[ G_r(X):=(X^3-X)^2+3h_r(X)(X^3-X). \] On the above ideal image, define the linear relations \[ \mathcal A_q:= ''\frac{t_2}{9}'' = \mathfrak D_{t_2,\,9} \bigl(I\mathbb M_d^{\epsilon_q} (\mathbf Z/2187\mathbf Z)\bigr), \] and \[ \mathcal B_{r,q}:= ''\frac{G_r(\mathcal A_q)}{9}'' = \mathfrak D_{1,\,9}\circ G_r(\mathcal A_q). \] Here \(G_r(\mathcal A_q)\) is evaluated using the expanded polynomial convention. All these relations are formed on \(I\mathbb M_d^{\epsilon_q}(\mathbf Z/2187\mathbf Z)\); in particular, every intermediate element introduced in a composition or division belongs to this ideal image. 

Define the sets \(E_{r,s}\) by
\[
\begin{array}{c|ccc}
r & E_{r,0} & E_{r,1} & E_{r,2}\\ \hline
2,20,38 & \{0,2\} & \{0,1\} & \{0,1\}\\
8,26,44 & \{0,2\} & \{1,2\} & \{1,2\}\\
14      & \{0,2\} & \{0,2\} & \{0,1\}\\
32,50   & \{0,2\} & \{0,1\} & \{0,1\}
\end{array}
\]
and put \[ F_0(Y):=(Y^3-Y)^2, \qquad F_{r,s}(X,Y):= \bigl(1-(X-s)^2\bigr)^6 \prod_{\delta\in E_{r,s}}(Y-\delta)^2 \quad(s=0,1,2). \] These polynomials are expanded before evaluation at the linear relations. In each monomial \(X^iY^j\), the relation \(\mathcal B_{r,q}\) is applied first, followed by \(\mathcal A_q\).  For every degree and orientation in the above range, the computation \cite[\href{https://github.com/nrustom/hecke-congruences/blob/main/playground_mod_81.ipynb}{\nolinkurl{playground_mod_81.ipynb}}] {RustomHeckeCongruences} verifies \begin{equation} F_0(\mathcal B_{r,q})\equiv0\pmod3, \qquad F_{r,s}(\mathcal A_q,\mathcal B_{r,q}) \equiv0\pmod3 \quad(s=0,1,2) \label{eq:p3-mod81-zero-source-relations} \end{equation} on \(I\mathbb M_d^{\epsilon_q}(\mathbf Z/2187\mathbf Z)\).

Both degree shifts preserve \(r\pmod{54}\), and the ideal \(I=(9,t_2)\) is unchanged by the twist \(t_2\mapsto-t_2\). The presentations are therefore compatible with both transfer transitions. By the ideal-image version of Proposition~\ref{prop:presented-relation-propagation} discussed in Section~3.10, the verification in \[ 2916\leq d<7290,\qquad d\equiv r\pmod{54}, \] propagates \eqref{eq:p3-mod81-zero-source-relations} to every degree \(d\geq2916\) in this residue class. Together with the direct verification in degrees \(d<2916\), this establishes these relations in every zero-branch degree, for both orientations.

We now take \(q=0\) and put \[ J_d:=9P_d^++T_2P_d^+\subseteq P_d^+. \] This is a Hecke-stable lattice containing \(9P_d^+\). The natural surjection \[ \mathbb M_d^+(\mathbf Z/2187\mathbf Z) \twoheadrightarrow P_d^+/3^7P_d^+ \] restricts to a surjection \[ I\mathbb M_d^+(\mathbf Z/2187\mathbf Z) \twoheadrightarrow J_d/3^7P_d^+. \] Since \(9P_d^+\subseteq J_d\), we have \(3^7P_d^+\subseteq3^5J_d\), and hence a surjection \[ I\mathbb M_d^+(\mathbf Z/2187\mathbf Z) \twoheadrightarrow J_d/3^5J_d. \] Thus \eqref{eq:p3-mod81-zero-source-relations} also holds on \(J_d/3^5J_d\).

The first relation implies that \(\mathcal B_{r,0}\) has full domain, since \(F_0(Y)\) is monic of degree \(6\). Since \(G_r(X)\) is also monic of degree \(6\), the definition of \(\mathcal B_{r,0}\) then implies that \(\mathcal A_0\) has full domain. Consequently, \[ T_2J_d\subseteq9J_d, \qquad A:=\frac{T_2}{9}\in\End_{\mathbf Z_3}(J_d). \] Cancelling \(9\) in the defining equation for \(\mathcal A_0\) shows that every value of \(\mathcal A_0(x)\) agrees with \(Ax\) modulo \(3^3J_d\). The same is therefore true for \(G_r(\mathcal A_0)\) and \(G_r(A)\).  

Full domain of \(\mathcal B_{r,0}\) now gives \[ G_r(A)J_d\subseteq9J_d, \qquad B_r:=\frac{G_r(A)}9 \in\End_{\mathbf Z_3}(J_d). \] Cancelling \(9\) once more shows that every value of \(\mathcal B_{r,0}(x)\) agrees with \(B_rx\) modulo \(3J_d\). Thus \eqref{eq:p3-mod81-zero-source-relations} gives \begin{equation} F_0(B_r)J_d\subseteq3J_d, \qquad F_{r,s}(A,B_r)J_d\subseteq3J_d \quad(s=0,1,2). \label{eq:p3-mod81-zero-target-relations} \end{equation}

\subsection{Congruences for \texorpdfstring{\(a_2\)}{a2} and \texorpdfstring{\(a_7\)}{a7}}  
\subsubsection{Congruence for \texorpdfstring{\(a_7\)}{a7}} By the Eichler--Shimura comparison recalled in Section~\ref{sec:target-lattice}, choose a primitive simultaneous eigenvector \[ v_f\in L_d^+\otimes_{\mathbf Z_3}\mathcal O_f. \] Since \(N_r=(T_7-t_r)/27\) preserves \(L_d^+\) and \(N_r^2L_d^+\subseteq3L_d^+\), its eigenvalue \[ n_7:=\frac{a_7(f)-t_r}{27} \] satisfies \(n_7\in\mathcal O_f\) and \(n_7^2\in3\mathcal O_f\). Hence \(v_3(n_7)\geq\tfrac12\), giving \begin{equation} a_7(f)\equiv_{\mathrm{val}}t_r \equiv_{\mathrm{val}}1+7^{k-1}\pmod{81}. \label{eq:p3-mod81-T7} \end{equation}

\subsubsection{The nonzero branch} Suppose \(r\equiv0,4\pmod6\). Since \(Q_r=(T_2^2-9c_r)/81\) preserves \(L_d^+\), its eigenvalue \[q_2:=\frac{a_2(f)^2-9c_r}{81} \] is integral. Consequently, \[ a_2(f)^2=9(c_r+9q_2). \] Since \(c_r\equiv1\pmod3\), the factor \(c_r+9q_2\) is a unit. Thus \(v_3(a_2(f))=1\), and \[ z_2:=\frac{a_2(f)}3\in\mathcal O_f^\times, \qquad z_2^2=c_r+9q_2. \] By \eqref{eq:p3-nonzero-relations}, \[ \bigl(q_2(q_2-1)\bigr)^2\in3\mathcal O_f. \] Reducing modulo the maximal ideal of \(\mathcal O_f\) therefore gives \(\bar q_2\in\{0,1\}\). Hence \[ z_2^2\equiv_{\mathrm{val}}c_r \quad\text{or}\quad c_r+9\pmod{27}. \]

Both \(c_r\) and \(c_r+9\) are congruent to \(1\) modulo \(3\), so each has exactly two square roots modulo \(27\). These roots are simple modulo \(3\). It follows that \begin{equation} a_2(f)\equiv_{\mathrm{val}}3u\pmod{81} \quad\text{for some }u\in\mathbf Z/27\mathbf Z \text{ satisfying } u^2\equiv c_r\ \text{or}\ c_r+9\pmod{27}. \label{eq:p3-mod81-nonzero-T2} \end{equation} Thus there are four possible values of \(a_2(f)\) modulo \(81\) on each nonzero weight residue. 

\subsubsection{The zero branch}

Suppose \(r\equiv2\pmod6\). Since \(J_d\) contains \(9P_d^+\), it spans the same \(\mathbf Q_3\)-vector space as \(P_d^+\). We may therefore choose a primitive simultaneous eigenvector for \(f\) in \(J_d\otimes_{\mathbf Z_3}\mathcal O_f\). The integral operators \(A\) and \(B_r\) have eigenvalues \[ x_2:=\frac{a_2(f)}9\in\mathcal O_f, \qquad y_2:=\frac{(x_2^3-x_2)^2+ 3h_r(x_2)(x_2^3-x_2)}9\in\mathcal O_f. \] In particular, \(a_2(f)\in9\mathcal O_f\). 

Put \(c=x_2^3-x_2\). Since \[ c^2+3h_r(x_2)c = c(c + 3h_r(x_2))\in9\mathcal O_f, \] we have \(c\in3\mathcal O_f\): otherwise \(v_3(c)<1\), so \(v_3(c+3h_r(x_2))=v_3(c)\), giving \[ v_3\bigl(c(c+3h_r(x_2))\bigr)=2v_3(c)<2, \] a contradiction. Thus \[ u_2:=\frac{x_2^3-x_2}{3}\in\mathcal O_f, \qquad y_2=u_2^2+h_r(x_2)u_2. \] The factorization \(x_2^3-x_2=x_2(x_2-1)(x_2+1)\) shows that there is a unique \(s\in\{0,1,2\}\) such that \[ x_2\equiv_{\mathrm{lit}}s\pmod3. \]

Evaluating \eqref{eq:p3-mod81-zero-target-relations} and reducing modulo the maximal ideal of \(\mathcal O_f\) gives \[ \bar y_2^3=\bar y_2, \qquad \prod_{\delta\in E_{r,s}}(\bar y_2-\delta)^2=0. \] Indeed, the selector \(\bigl(1-(x_2-s)^2\bigr)^6\) reduces to \(1\). Consequently, \[ \bar y_2\in E_{r,s}, \qquad \bar u_2^{\,2}+\overline{h_r(s)}\,\bar u_2 \in E_{r,s}. \]

Write \(x_2=s+3v_2\), with \(v_2\in\mathcal O_f\). Then \[ u_2=\frac{x_2^3-x_2}{3} \equiv \frac{s^3-s}{3}-v_2 \pmod{3\mathcal O_f}. \] Thus \(\bar u_2\), together with \(s\), determines \(a_2(f)=9s+27v_2\) modulo \(81\) in the valuative sense. 

Choose \(\omega\in\overline{\mathbf Z}_3\) with \(\omega^2=2\). Solving the preceding quadratic conditions gives \begin{equation} a_2(f)\equiv_{\mathrm{val}}\alpha\pmod{81} \qquad\text{for some }\alpha\in\mathcal S_r, \label{eq:p3-mod81-zero-T2} \end{equation} where \[ 
\begin{array}{c|l}
r & \mathcal S_r\\ \hline
2,14,20,32,38,50
& \{0,9,18,27,36,45,54,63,72\}\\
8,26,44
& \{0,18,27,36,45,54,63,\,
9\pm27\omega,\,72\pm27\omega\}.
\end{array}
\]
Indeed, in the first row the permitted values of \(\bar u_2\) are precisely \(\mathbf F_3\) for every \(s\). In the second row they are \(\mathbf F_3\) when \(s=0\), and \(\{\pm1,\pm\bar\omega\}\) when \(s=1,2\).

\subsection{Final classification}

\begin{proof}[Proof of Theorem~\ref{thm:intro-p3}] By the preceding subsections, every normalized cuspidal level-one eigenform \(f\) determines a unique signature \[ \sigma=(r,\alpha), \qquad r\equiv k-2\pmod{54},\qquad 0\leq r<54, \] such that \[ a_2(f)\equiv_{\mathrm{val}}\alpha\pmod{81}, \qquad a_7(f)\equiv_{\mathrm{val}}t_r\pmod{81}. \] The permitted values are listed by weight residue \(k\equiv r+2\pmod{54}\) in Table~\ref{tab:p3-signatures}. There are \[ 18\cdot4+6\cdot9+3\cdot11=159 \] such signatures altogether.

The finite computation \cite[\url{https://github.com/nrustom/hecke-congruences/tree/main/strong_signatures/p3_m4}] {RustomHeckeCongruences} verifies that every signature is realized by a normalized cuspidal level-one eigenform \(g_\sigma\) of weight \(k_\sigma\leq214\), with \[ k_\sigma-2\equiv r\pmod{54}. \] This includes the \(147\) rational signatures and the \(12\) nonrational signatures, the latter occurring in six conjugate pairs.

If \(f\) has signature \(\sigma\), then \[ a_2(f)\equiv_{\mathrm{val}}a_2(g_\sigma)\pmod{81}, \qquad a_7(f)\equiv_{\mathrm{val}}a_7(g_\sigma)\pmod{81}, \] and \(k_\sigma\equiv k\pmod{54}\). Hence \[ 4^{k_\sigma}\equiv4^k\pmod{81}, \] so \(f\) and \(g_\sigma\) also have the same value of the weight operator \([4]\) modulo \(81\). By \eqref{eq:p3-big-generation}, \[ \mathbb T_{\mathrm{dc}}^{(3)} =\overline{\mathbf Z_3[T_2,T_7,[4]]}. \] Their prime-to-\(3\) Hecke eigensystems are therefore congruent modulo \(81\) in the valuative sense: \[ a_n(f)\equiv_{\mathrm{val}}a_n(g_\sigma)\pmod{81} \qquad ((n,3)=1). \]

Conversely, suppose that eigenforms \(f\) and \(g\), of weights \(k\) and \(K\), have congruent prime-to-\(3\) Hecke eigensystems modulo \(81\). The identities \[ a_2(f)^2-a_4(f)=2^{k-1}, \qquad a_2(g)^2-a_4(g)=2^{K-1} \] give \[ 2^{k-1}\equiv_{\mathrm{val}}2^{K-1}\pmod{81}. \] Since these are rational integers, this congruence is literal. As \(2\) has order \(54\) modulo \(81\), we obtain \(k\equiv K\pmod{54}\). Agreement at \(T_2\) then implies that their corresponding values \(\alpha\) agree. Thus distinct signatures give distinct eigensystems.

Reducing the signatures modulo \(27\), direct calculation shows that their values at \(T_2\) and \(T_7\) agree with those of the Eisenstein twists corresponding to the pairs in Theorem~\ref{thm:intro-p3}. For each signature, the pair is unique. Choose representatives \(i\geq4\) and even \(\kappa\geq4\) such that \[ \{i,i+\kappa-1\}=\{a,b\} \quad\text{in }\mathbf Z/18\mathbf Z, \qquad \kappa+2i\equiv k\pmod{54}. \] These congruences at \(T_2\) and \(T_7\) are literal modulo \(27\). By Lemma~\ref{lem:weak-cyclotomic}, \(\theta^iG_\kappa\) is a weak cuspidal eigenform modulo \(27\), with the same value of \([4]\) as \(f\). Equation~\eqref{eq:p3-big-generation} therefore gives \[ a_n(f)\equiv n^i\sigma_{\kappa-1}(n)\pmod{27} \qquad ((n,3)=1). \] Since \(n^{54}\equiv1\pmod{27}\) for \(3\nmid n\), and both twisted series vanish modulo \(27\) at indices divisible by \(3\), this proves \[ \theta^{54}f\equiv_{\mathrm{lit}} \theta^iG_\kappa\pmod{27}. \]

By Lemma~\ref{lem:modular-correction}, \[ \delta_f:= \overline{\frac{\theta^{54}f-\theta^iG_\kappa}{27}} \] is a modular form satisfying \[ \theta^{54}f\equiv_{\mathrm{val}} \theta^iG_\kappa+27\delta_f\pmod{81}. \] For fixed \(i,\kappa\), equation~\eqref{eq:p3-big-generation} shows that its \(q\)-expansion is determined by the signature and is defined over the residue field of \(\mathbf Q_3(\alpha)\). Thus \(\delta_f\) is defined over \(\mathbf F_3\) for rational signatures and over \(\mathbf F_9\) otherwise.

Consequently, there are exactly \(159\) prime-to-\(3\) Hecke eigensystems modulo \(81\), in the valuative sense, and every such eigensystem is realized in weight at most \(214\). 
\end{proof}

\section{The prime \texorpdfstring{\(5\)}{5}: classification modulo
\texorpdfstring{\(125\)}{125}}
\label{sec:p5-mod125}

Throughout this section, put \[ r\equiv d=k-2\pmod{100},\qquad 0\leq r<100. \]
For every even residue \(r\pmod{100}\), define the constants
\(\alpha_r,\beta_r,\gamma_r,v_r,h_{r,0},h_{r,1},h_{r,2}\)
by Table~\ref{tab:p5-mod125-constants}.

\begin{table}[H]
\centering
\caption{Constants for the classification modulo \(125\).}
\label{tab:p5-mod125-constants}
\small
\setlength{\tabcolsep}{3pt}
\begin{tabular}{@{}c|rrrrrrr@{}}
\toprule
\(r\) & \(\alpha_r\) & \(\beta_r\) & \(\gamma_r\)
& \(v_r\) & \(h_{r,0}\) & \(h_{r,1}\) & \(h_{r,2}\)\\
\midrule
0  & 1 & 3 & 0 & 4  & 104 & 19 & 1\\
2  & 2 & 0 & 3 & 20 & 116 & 4  & 4\\
4  & 4 & 0 & 1 & 12 & 64  & 14 & 1\\
6  & 4 & 3 & 0 & 15 & 106 & 24 & 4\\
8  & 2 & 0 & 0 & 19 & 124 & 9  & 1\\
10 & 0 & 3 & 0 & 6  & 121 & 19 & 4\\
12 & 0 & 4 & 4 & 0  & 34  & 4  & 1\\
14 & 3 & 4 & 3 & 13 & 36  & 14 & 4\\
16 & 4 & 0 & 4 & 15 & 44  & 24 & 1\\
18 & 1 & 3 & 3 & 21 & 101 & 9  & 4\\
20 & 4 & 3 & 0 & 9  & 29  & 19 & 1\\
22 & 3 & 0 & 3 & 5  & 66  & 4  & 4\\
24 & 4 & 0 & 1 & 12 & 114 & 14 & 1\\
26 & 3 & 3 & 0 & 5  & 56  & 24 & 4\\
28 & 4 & 0 & 0 & 14 & 49  & 9  & 1\\
30 & 2 & 3 & 0 & 1  & 71  & 19 & 4\\
32 & 4 & 4 & 4 & 15 & 84  & 4  & 1\\
34 & 3 & 4 & 3 & 13 & 111 & 14 & 4\\
36 & 0 & 0 & 4 & 0  & 94  & 24 & 1\\
38 & 4 & 3 & 3 & 1  & 51  & 9  & 4\\
40 & 2 & 3 & 0 & 14 & 79  & 19 & 1\\
42 & 4 & 0 & 3 & 15 & 16  & 4  & 4\\
44 & 4 & 0 & 1 & 12 & 39  & 14 & 1\\
46 & 2 & 3 & 0 & 20 & 6   & 24 & 4\\
48 & 1 & 0 & 0 & 9  & 99  & 9  & 1\\
\bottomrule
\end{tabular}
\hspace{1.5em}
\begin{tabular}{@{}c|rrrrrrr@{}}
\toprule
\(r\) & \(\alpha_r\) & \(\beta_r\) & \(\gamma_r\)
& \(v_r\) & \(h_{r,0}\) & \(h_{r,1}\) & \(h_{r,2}\)\\
\midrule
50 & 4 & 3 & 0 & 21 & 21  & 19 & 4\\
52 & 3 & 4 & 4 & 5  & 9   & 4  & 1\\
54 & 3 & 4 & 3 & 13 & 61  & 14 & 4\\
56 & 1 & 0 & 4 & 10 & 19  & 24 & 1\\
58 & 2 & 3 & 3 & 6  & 1   & 9  & 4\\
60 & 0 & 3 & 0 & 19 & 4   & 19 & 1\\
62 & 0 & 0 & 3 & 0  & 91  & 4  & 4\\
64 & 4 & 0 & 1 & 12 & 89  & 14 & 1\\
66 & 1 & 3 & 0 & 10 & 81  & 24 & 4\\
68 & 3 & 0 & 0 & 4  & 24  & 9  & 1\\
70 & 1 & 3 & 0 & 16 & 96  & 19 & 4\\
72 & 2 & 4 & 4 & 20 & 59  & 4  & 1\\
74 & 3 & 4 & 3 & 13 & 11  & 14 & 4\\
76 & 2 & 0 & 4 & 20 & 69  & 24 & 1\\
78 & 0 & 3 & 3 & 11 & 76  & 9  & 4\\
80 & 3 & 3 & 0 & 24 & 54  & 19 & 1\\
82 & 1 & 0 & 3 & 10 & 41  & 4  & 4\\
84 & 4 & 0 & 1 & 12 & 14  & 14 & 1\\
86 & 0 & 3 & 0 & 0  & 31  & 24 & 4\\
88 & 0 & 0 & 0 & 24 & 74  & 9  & 1\\
90 & 3 & 3 & 0 & 11 & 46  & 19 & 4\\
92 & 1 & 4 & 4 & 10 & 109 & 4  & 1\\
94 & 3 & 4 & 3 & 13 & 86  & 14 & 4\\
96 & 3 & 0 & 4 & 5  & 119 & 24 & 1\\
98 & 3 & 3 & 3 & 16 & 26  & 9  & 4\\
\bottomrule
\end{tabular}
\end{table}

Put \[ H_r(X):=\alpha_r+\beta_rX+\gamma_rX^2 \] and \begin{equation} N_r(X):=(X^5-X)^2-5H_r(X)(X^5-X). \label{eq:p5-mod125-numerator} \end{equation}

\subsection{Certified Hecke operator identities}

We work at precision \(625\). The Dickson multipliers have degrees \[ a_4=2500,\qquad b_4=750,\qquad a_4+b_4=3250. \] The induction base consists of the even degrees \[ 750\leq d\leq3248, \] with all four orientations \(q\in\{0,1,2,3\}\). On \[ \mathbb M_d^{\epsilon_q}(\mathbf Z/625\mathbf Z), \qquad \epsilon_q=(-1)^q, \] the formal operators \(t_2\) and \(t_{19}\) act as \[ t_2=\chi_4(2)^qT_2=2^{125q}T_2, \qquad t_{19}=\chi_4(19)^qT_{19}=19^{125q}T_{19}. \] In orientation \(q\), we use the constants with subscript \(d+50q\), where the subscript is understood modulo \(100\). The remaining even degrees \(0\leq d<750\) are verified directly in the untwisted plus orientation \(q=0\).

For every degree and orientation in the above verification ranges, the computation \cite[\href{https://github.com/nrustom/hecke-congruences/blob/main/playground_mod_125.ipynb}{\nolinkurl{playground_mod_125.ipynb}}] {RustomHeckeCongruences} verifies \begin{equation} \begin{aligned} &\bigl(t_2^2-h_{d+50q,0} -h_{d+50q,1}t_{19}-h_{d+50q,2}t_{19}^2\bigr) \mathbb M_d^{\epsilon_q}(\mathbf Z/625\mathbf Z)\\ &\qquad\subseteq 125\mathbb M_d^{\epsilon_q}(\mathbf Z/625\mathbf Z). \end{aligned} \label{eq:p5-mod125-joint-source} \end{equation}

On the same module, put \[ \mathcal Z_{19,q}:= ''\frac{t_{19}}5'' = \mathfrak D_{t_{19},\,5} \bigl(\mathbb M_d^{\epsilon_q}(\mathbf Z/625\mathbf Z)\bigr) \] and \[ \mathcal B_{r,q}:= ''\frac{N_{d+50q}(\mathcal Z_{19,q})}{25}'' = \mathfrak D_{1,\,25}\circ N_{d+50q}(\mathcal Z_{19,q}). \] Here \(N_{d+50q}(\mathcal Z_{19,q})\) is evaluated using the expanded polynomial convention. All these relations are formed on \(\mathbb M_d^{\epsilon_q}(\mathbf Z/625\mathbf Z)\); in particular, every intermediate element introduced in a composition or division belongs to this module. The computation verifies \begin{equation} \operatorname{dom}\mathcal Z_{19,q} = \operatorname{dom}\mathcal B_{r,q} = \mathbb M_d^{\epsilon_q}(\mathbf Z/625\mathbf Z). \label{eq:p5-mod125-full-domain}
\end{equation}

Write \(\bar v_r\) for the reduction of \(v_r\) modulo \(5\), and put \[ u_r:=\frac{v_r^5-v_r}{5}\bmod5,\qquad \Lambda_r:=\{\bar v_r,\bar v_r+2,\bar v_r-2\} \subseteq\mathbf F_5. \] The constants in Table~\ref{tab:p5-mod125-constants} satisfy \[ H_r(\bar v_r)=2u_r \qquad\text{in }\mathbf F_5. \] For \(\ell\in\mathbf F_5\), let \(P_{r,\ell}(Y)\in\mathbf Z[Y]\) be a lift of \[ \overline{P}_{r,\ell}(Y):= \begin{cases} 1,&\ell\notin\Lambda_r,\\ (Y+u_r^2)^4,&\ell=\bar v_r,\\ \displaystyle\left( \prod_{t\in\mathbf F_5} \bigl(Y-t^2+H_r(\ell)t\bigr) \right)^4, &\ell\in\Lambda_r\setminus\{\bar v_r\}. \end{cases}\]

For \(\ell\in\{0,1,2,3,4\}\), put \[ E_\ell(X):=\bigl(1-(X-\ell)^4\bigr)^5, \qquad \mathcal E_{\ell,q}:=E_\ell(\mathcal Z_{19,q}), \] and \[ \mathcal J_{r,\ell,q}:= \mathcal E_{\ell,q}\circ \mathcal B_{r,q}\circ \mathcal E_{\ell,q}. \] Polynomial evaluation uses the expanded polynomial convention. For every degree and orientation in the above verification ranges, the computation verifies \begin{equation} P_{d+50q,\ell}(\mathcal J_{r,\ell,q}) \circ\mathcal E_{\ell,q} \equiv0\pmod5 \qquad(\ell=0,1,2,3,4) \label{eq:p5-mod125-selector-source} \end{equation} on \(\mathbb M_d^{\epsilon_q}(\mathbf Z/625\mathbf Z)\). 

These conditions are closed under the two transitions of Remark~\ref{rem:periodic-weight-propagation}. Indeed, the \(A_4\)-transition sends \[ (d,q)\longmapsto(d-2500,q), \] while the \(B_4\)-transition sends \[ (d,q)\longmapsto(d-750,q+1) \] and reverses the sign. Both transitions preserve the subscripts of the constants, since \[ (d-2500)+50q\equiv d+50q\pmod{100}, \qquad (d-750)+50(q+1)\equiv d+50q\pmod{100}. \] The change in orientation accounts for the twist of the Hecke operators. Since \(\chi_4^4=\mathrm{id}\), the four orientations suffice.

Thus the same strong-induction argument as in Proposition~\ref{prop:presented-relation-propagation} propagates \eqref{eq:p5-mod125-joint-source}, \eqref{eq:p5-mod125-full-domain}, and \eqref{eq:p5-mod125-selector-source} to every even degree \(d\geq750\), for all four orientations. Together with the direct verification in even degrees \(d<750\), this establishes these conditions in every even degree in the untwisted plus orientation.

We now take \(q=0\) and pass through the natural surjection \[ \mathbb M_d^+(\mathbf Z/625\mathbf Z) \twoheadrightarrow P_d^+/625P_d^+. \]
Full domain of \(\mathcal Z_{19,0}\) gives \[ T_{19}P_d^+\subseteq5P_d^+, \qquad Z_{19}:=\frac{T_{19}}5 \in\End_{\mathbf Z_5}(P_d^+). \] Cancelling \(5\) in its defining equation shows that every value of \(\mathcal Z_{19,0}\), after passage to the target, agrees with \(Z_{19}\) modulo \(125P_d^+\). The same is therefore true for \(N_r(\mathcal Z_{19,0})\) and \(N_r(Z_{19})\). 

Full domain of \(\mathcal B_{r,0}\) now gives \[ N_r(Z_{19})P_d^+\subseteq25P_d^+, \qquad B_r:=\frac{N_r(Z_{19})}{25} \in\End_{\mathbf Z_5}(P_d^+). \] Cancelling \(25\) once more shows that every value of \(\mathcal B_{r,0}\), after passage to the target, agrees with \(B_r\) modulo \(5P_d^+\). 

Since \[ (Z_{19}^5-Z_{19})^2P_d^+\subseteq5P_d^+, \] the reduction \(\bar Z_{19}\) satisfies \[ (\bar Z_{19}^5-\bar Z_{19})^2=0. \] The factorization \[ (X^5-X)^2=\prod_{j\in\mathbf F_5}(X-j)^2 \] therefore gives \[ P_d^+/5P_d^+ =\bigoplus_{j\in\mathbf F_5}\ker(\bar Z_{19}-j)^2. \] On the \(j\)-th summand, write \(U = \bar Z_{19} - j\), so \(U^2=0\). In characteristic \(5\), \[ E_\ell(j+U) =\bigl(1-(j-\ell+U)^4\bigr)^5 =1-(j-\ell)^4 = \begin{cases} 1,&j=\ell,\\ 0,&j\ne\ell. \end{cases} \] Thus the reductions of \(E_\ell(Z_{19})\) are the corresponding orthogonal idempotents. They commute with \(B_r\), since both operators are polynomials in \(Z_{19}\) over \(\mathbf Q_5\). 

Fix \(\ell\), and write \(E\) and \(B\) for the reductions of \(E_\ell(Z_{19})\) and \(B_r\). By the preceding cancellation argument, \eqref{eq:p5-mod125-selector-source} gives \[ P_{r,\ell}(EBE)E=0. \] Since \(B\) commutes with \(E\), it preserves \(\operatorname{im}E\) and \(\ker E\). On \(\operatorname{im}E\), the operator \(E\) is the identity, so this equation gives \(P_{r,\ell}(B)=0\). On \(\ker E\), the operator \(E P_{r,\ell}(B)\) is zero. Hence \(E P_{r,\ell}(B)=0\) on the whole quotient. Equivalently, \[ E_\ell(Z_{19})P_{r,\ell}(B_r)P_d^+ \subseteq5P_d^+ \qquad(\ell=0,1,2,3,4). \] By saturation, these conclusions restrict to \(L_d^+\). Together with \eqref{eq:p5-mod125-joint-source}, we obtain \begin{equation} \begin{aligned} &Z_{19}L_d^+\subseteq L_d^+, \qquad B_rL_d^+\subseteq L_d^+,\\ &\bigl(T_2^2-h_{r,0}-h_{r,1}T_{19} -h_{r,2}T_{19}^2\bigr)L_d^+ \subseteq125L_d^+,\\ &E_\ell(Z_{19})P_{r,\ell}(B_r)L_d^+ \subseteq5L_d^+ \qquad(\ell=0,1,2,3,4). \end{aligned} \label{eq:p5-mod125-target-relations} \end{equation}

\subsection{Congruences for \texorpdfstring{\(a_2\)}{a2} and \texorpdfstring{\(a_{19}\)}{a19}}

By the Eichler--Shimura comparison recalled in Section~\ref{sec:target-lattice}, choose a primitive simultaneous eigenvector \[ v_f\in L_d^+\otimes_{\mathbf Z_5}\mathcal O_f \] for the prime-to-\(5\) Hecke eigensystem of \(f\). Since \(Z_{19}\) and \(B_r\) preserve \(L_d^+\), their eigenvalues \[ z_{19}:=\frac{a_{19}(f)}5, \qquad b_{19}:=\frac{N_r(z_{19})}{25} \] belong to \(\mathcal O_f\). 

Put \(c=z_{19}^5-z_{19}\). We have  \[ c\bigl(c-5H_r(z_{19})\bigr)=25b_{19}. \] Consequently, \(c\in5\mathcal O_f\): otherwise \(v_5(c)<1\), so \[ v_5\bigl(c-5H_r(z_{19})\bigr)=v_5(c), \] giving \[ v_5\bigl(c(c-5H_r(z_{19}))\bigr)=2v_5(c)<2, \] a contradiction. Thus \[ u_{19}:=\frac{z_{19}^5-z_{19}}5\in\mathcal O_f, \qquad b_{19}=u_{19}^2-H_r(z_{19})u_{19}. \] Reducing \(z_{19}^5-z_{19}\in5\mathcal O_f\) modulo the
maximal ideal shows that \(\bar z_{19}\) equals a unique \(\ell\in\{0,1,2,3,4\}\). Since \[ X^5-X\equiv\prod_{j=0}^4(X-j)\pmod5, \] we have \[ \prod_{j=0}^4(z_{19}-j)\in5\mathcal O_f. \] Every factor with \(j\ne\ell\) is a unit, so \(z_{19}-\ell\in5\mathcal O_f\). Thus \[ z_{19}\equiv_{\mathrm{lit}}\ell\pmod5. \]

Evaluating the selector identity in \eqref{eq:p5-mod125-target-relations} on the primitive eigenvector \(v_f\) gives \[ E_\ell(z_{19})P_{r,\ell}(b_{19})\in5\mathcal O_f. \] Since \(z_{19}\equiv_{\mathrm{lit}}\ell\pmod5\), we have \[ E_\ell(z_{19}) =\bigl(1-(z_{19}-\ell)^4\bigr)^5 \equiv1\pmod{5\mathcal O_f}. \] Thus \(E_\ell(z_{19})\) is a unit, and \(P_{r,\ell}(b_{19})\in5\mathcal O_f\). Reducing modulo the maximal ideal of \(\mathcal O_f\) gives \[ \overline P_{r,\ell}(\bar b_{19})=0. \] Reducing \(b_{19}=u_{19}^2-H_r(z_{19})u_{19}\) also gives \[ \bar b_{19} =\bar u_{19}^{\,2}-H_r(\ell)\bar u_{19}. \] If \(\ell\notin\Lambda_r\), then \(\overline P_{r,\ell}=1\), a contradiction. Hence \(\ell\in\Lambda_r\). 

If \(\ell=\bar v_r\), then \(H_r(\ell)=2u_r\) and \(\overline P_{r,\ell}(Y)=(Y+u_r^2)^4\). Thus \((\bar b_{19}+u_r^2)^4=0\).Since the residue field has no nonzero nilpotents,  \[ 0=\bar b_{19}+u_r^2 =\bar u_{19}^{\,2}-2u_r\bar u_{19}+u_r^2 =(\bar u_{19}-u_r)^2, \] so \(\bar u_{19}=u_r\).

If \(\ell\in\Lambda_r\setminus\{\bar v_r\}\), use the identity \[ \prod_{t\in\mathbf F_5} \bigl(U^2-aU-t^2+at\bigr) = \prod_{t\in\mathbf F_5}(U-t)(U+t-a) =(U^5-U)^2 \qquad(a\in\mathbf F_5). \] Taking \(a=H_r(\ell)\), the equality \(\overline P_{r,\ell}(\bar b_{19})=0\) therefore gives \[ (\bar u_{19}^{\,5}-\bar u_{19})^8=0. \] Hence \(\bar u_{19}\in\mathbf F_5\).

Write \[ z_{19}=\ell+5w_{19}, \qquad w_{19}\in\mathcal O_f. \] Then \[ u_{19} =\frac{z_{19}^5-z_{19}}5 \equiv\frac{\ell^5-\ell}{5}-w_{19} \pmod{5\mathcal O_f}. \] Thus \(\bar u_{19}\) determines \(\bar w_{19}\), and hence \(z_{19}\) modulo \(25\) in the valuative sense. When \(\ell=\bar v_r\), the equality \(\bar u_{19}=u_r\) selects \(z_{19}\equiv_{\mathrm{val}}v_r\pmod{25}\). For either of the other two values of \(\ell\), there are five possibilities, corresponding to \(\bar w_{19}\in\mathbf F_5\).

Consequently, \begin{equation} a_{19}(f)\equiv_{\mathrm{val}}5t\pmod{125} \label{eq:p5-mod125-T19} \end{equation} for a unique \(t\in\mathbf Z/25\mathbf Z\) satisfying \[ t=v_r \qquad\text{or}\qquad \bar t\in\{\bar v_r+2,\bar v_r-2\}. \] There are eleven such values of \(t\) for each even residue \(r\pmod{100}\).

Evaluating the ordinary joint identity in \eqref{eq:p5-mod125-target-relations} on \(v_f\) gives \[ a_2(f)^2-h_{r,0}-h_{r,1}a_{19}(f) -h_{r,2}a_{19}(f)^2\in125\mathcal O_f. \] Together with \eqref{eq:p5-mod125-T19}, this yields \[ a_2(f)^2\equiv_{\mathrm{val}} h_{r,0}+5h_{r,1}t+25h_{r,2}t^2 \pmod{125}. \] The constants satisfy \[ h_{r,0}\equiv \begin{cases} 4,&r\equiv0\pmod4,\\ 1,&r\equiv2\pmod4 \end{cases} \pmod5. \] Thus, for each permitted \(t\), the congruence \[ a^2\equiv h_{r,0}+5h_{r,1}t+25h_{r,2}t^2 \pmod{125} \] has exactly two roots \(a\in\mathbf Z/125\mathbf Z\), both units. Their reductions modulo \(5\) are distinct, and one agrees with the reduction of \(a_2(f)\). For that root, the factor \(a_2(f)+a\) is a unit. Factoring \(a_2(f)^2-a^2\) therefore gives \begin{equation} a_2(f)\equiv_{\mathrm{val}}a\pmod{125}. \label{eq:p5-mod125-T2} \end{equation} Consequently, there are \(22\) permitted pairs \((a,5t)\) for each even residue \(r\pmod{100}\).

\subsection{Final classification}  By the preceding subsection, every normalized cuspidal level-one eigenform \(f\) determines a unique signature \[ \sigma=(r,a,5t), \qquad r\equiv k-2\pmod{100},\qquad 0\leq r<100, \] where \((a,5t)\) is one of the permitted pairs, such that \[ a_2(f)\equiv_{\mathrm{val}}a\pmod{125}, \qquad a_{19}(f)\equiv_{\mathrm{val}}5t\pmod{125}. \] There are \(50\cdot22=1100\) such signatures altogether.

The finite computation \cite[\url{https://github.com/nrustom/hecke-congruences/tree/main/strong_signatures/p5_m3}] {RustomHeckeCongruences} verifies that every signature is realized by a normalized cuspidal level-one eigenform \(g_\sigma\) of weight \(k_\sigma\leq598\), with \[ k_\sigma-2\equiv r\pmod{100}. \] If \(f\) has signature \(\sigma\), then \[ a_2(f)\equiv_{\mathrm{val}}a_2(g_\sigma)\pmod{125}, \qquad a_{19}(f)\equiv_{\mathrm{val}}a_{19}(g_\sigma)\pmod{125}. \] Moreover, \(k_\sigma\equiv k\pmod{100}\), so \[ 6^{k_\sigma}\equiv6^k\pmod{125}. \] Thus \(f\) and \(g_\sigma\) also have the same value of the weight operator \([6]\) modulo \(125\). By \eqref{eq:p5-big-generation}, \[ \mathbb T_{\mathrm{dc}}^{(5)} =\overline{\mathbf Z_5[T_2,T_{19},[6]]}. \] Their prime-to-\(5\) Hecke eigensystems are therefore congruent modulo \(125\) in the valuative sense: \[ a_n(f)\equiv_{\mathrm{val}}a_n(g_\sigma)\pmod{125} \qquad((n,5)=1). \]

Conversely, suppose that eigenforms \(f\) and \(g\), of weights \(k\) and \(K\), have congruent prime-to-\(5\) Hecke eigensystems modulo \(125\). The identities \[ a_2(f)^2-a_4(f)=2^{k-1}, \qquad a_2(g)^2-a_4(g)=2^{K-1} \] give \[ 2^{k-1}\equiv_{\mathrm{val}}2^{K-1}\pmod{125}. \] Since these are rational integers, this congruence is literal. As \(2\) has order \(100\) modulo \(125\), we obtain \(k\equiv K\pmod{100}\). Agreement at \(T_2\) and \(T_{19}\) then implies that their permitted pairs \((a,5t)\) agree. Thus distinct signatures give distinct eigensystems.

Reducing the signatures modulo \(25\), direct calculation identifies their values at \(T_2,T_{19}\) with those of the Eisenstein twists corresponding to the pairs in Theorem~\ref{thm:intro-p5}. For each signature, the pair is unique. Choose \(i\geq3\) and even \(\kappa\geq4\) with \[ \kappa+2i\equiv k\pmod{100}, \] and with \(\{i,i+\kappa-1\}\) reducing modulo \(20\) to the selected pair. By Lemma~\ref{lem:weak-cyclotomic} and \eqref{eq:p5-big-generation}, agreement at these generators and at \([6]\) gives \[ \theta^{100}f\equiv_{\mathrm{lit}} \theta^iG_\kappa\pmod{25}. \] By Lemma~\ref{lem:modular-correction}, \[ \delta_f:= \overline{\frac{\theta^{100}f-\theta^iG_\kappa}{25}} \] is a modular form satisfying \[ \theta^{100}f\equiv_{\mathrm{val}} \theta^iG_\kappa+25\delta_f\pmod{125}. \] For fixed \(i,\kappa\), equation~\eqref{eq:p5-big-generation} shows that its \(q\)-expansion is determined by the signature and is defined over \(\mathbf F_5\), since all signature values are rational integers.

Consequently, there are exactly \(1100\) prime-to-\(5\) Hecke eigensystems modulo \(125\), in the valuative sense, and every such eigensystem is realized in weight at most \(598\).

\section{The prime \texorpdfstring{\(7\)}{7}: classification modulo
\texorpdfstring{\(49\)}{49}}
\label{sec:p7}

\subsection{Certified Hecke operator identities}

Define \begin{equation}\begin{aligned}F_0(X)&:=X^6+7X^5+45X^4+7X^3+4X^2+7X,\\F_2(X)&:=X^6+42X^5+6X^4+9X^2+14X,\\F_4(X)&:=X^6+7X^5+47X^4+14X^3+X^2+28X.\end{aligned}\label{eq:p7-tangent-polynomials}\end{equation} Modulo \(7\), these factor as \begin{equation} \begin{aligned} F_0(X)&\equiv\bigl(X(X-3)(X-4)\bigr)^2,\\F_2(X)&\equiv\bigl(X(X-2)(X-5)\bigr)^2,\\F_4(X)&\equiv\bigl(X(X-1)(X-6)\bigr)^2 \end{aligned}\pmod7.\label{eq:p7-tangent-factorizations}\end{equation}

Also put \[\{\alpha_j,\beta_j\}=\begin{cases}\{3,4\},&j=0,\\ \{2,5\},&j=2,\\ \{1,6\},&j=4, \end{cases}\qquad \mathcal C_j:=\{0,\alpha_j,\beta_j\}. \]

Define \begin{equation} Q_j(X):=\frac{F_j(X)}{49}, \qquad G_j(X,Y):= \frac{Y-2-X(X-\alpha_j)(X-\beta_j)}{7}, \qquad j\in\{0,2,4\}.\label{eq:Q-G-definition} \end{equation} Finally, \[H(X) := (X^7 - X)^3.\]

We first treat \(Q_{r\bmod6}(T_3)\). The computation is performed at precision \(343=7^3\). The Dickson multipliers have degrees \[ a_3=2058,\qquad b_3=392, \qquad\mbox{with}\qquad a_3+b_3=2450,\] so the exact induction base consists of the even degrees \[ 392\leq d<2450. \] For every such \(d\) and every \(0\leq q<6\), put \[ \mathcal Q_q:= ''\frac{F_{(d+2q)\bmod 6}(t_3)}{49}'' = \mathfrak D_{F_{(d+2q)\bmod6}(t_3),\,49} \bigl(\mathbb M_d^{\epsilon_q}(\mathbf Z/343\mathbf Z)\bigr), \qquad \epsilon_q=(-1)^q, \] where \(t_3\) acts as \(\chi_3(3)^qT_3\). The computation \cite[\href{https://github.com/nrustom/hecke-congruences/blob/main/playground_mod_49.ipynb}{\nolinkurl{playground_mod_49.ipynb}}]{RustomHeckeCongruences} verifies \[ \operatorname{dom}\mathcal Q_q =\mathbb M_d^{\epsilon_q}(\mathbf Z/343\mathbf Z), \qquad H(\mathcal Q_q)\equiv0\pmod7. \] These six conditions are closed under both transitions of Remark~\ref{rem:periodic-weight-propagation}: the \(A_3\)-transition preserves \(d+2q\pmod6\), while the \(B_3\)-transition sends \[ (d,q)\longmapsto(d-392,q+1) \] and again preserves \(d+2q\pmod6\).

Now fix an even residue \(r\bmod42\). The preceding transition compatibility and the same strong-induction argument as in Proposition~\ref{prop:presented-relation-propagation} propagate these conditions to every degree \[ d\geq392, \qquad d\equiv r\pmod{42}. \] Applying Proposition~\ref{prop:division-retained-precision} in orientation \(q=0\), with \(m=3\), \(\alpha_1=2\), \(b=1\), and \(S=1\), gives \[ Q_{r\bmod6}(T_3)\in\End_{\mathbf Z_7}(P_d^+), \qquad Q_{r\bmod6}(T_3)L_d^+\subseteq L_d^+. \] Moreover, \begin{equation} H\bigl(Q_{r\bmod6}(T_3)\bigr)P_d^+ \subseteq7P_d^+, \qquad H\bigl(Q_{r\bmod6}(T_3)\bigr)L_d^+ \subseteq7L_d^+. \label{eq:p7-Q-relations} \end{equation} The remaining degrees \[ 0\leq d<392, \qquad d\equiv r\pmod{42}, \] are verified directly. Consequently, these conclusions hold in every even degree \[ d\equiv r\pmod{42}. \]

We next treat \(G_{r\bmod6}(T_3,T_{29})\).  This computation is performed at precision \(49\).  The Dickson multipliers have degrees \[ a_2=294,\qquad b_2=56, \qquad\mbox{with}\qquad a_2+b_2=350, \] so the exact induction base consists of the even degrees \[ 56\leq d<350. \] For every such \(d\) and every \(0\leq q<6\), put \[ \mathcal G_q:= ''\frac{7G_{(d+2q)\bmod 6}(t_3, t_{29})}{7}'' = \mathfrak D_{7G_{(d+2q)\bmod6}(t_3,t_{29}),\,7} \bigl(\mathbb M_d^{\epsilon_q}(\mathbf Z/49\mathbf Z)\bigr), \qquad \epsilon_q=(-1)^q, \] where \(t_n\) acts as \(\chi_2(n)^qT_n\). Here \(7G_j\) denotes the integral polynomial \[ 7G_j(X,Y)=Y-2-X(X-\alpha_j)(X-\beta_j). \] The computation verifies \[ \operatorname{dom}\mathcal G_q =\mathbb M_d^{\epsilon_q}(\mathbf Z/49\mathbf Z), \qquad H(\mathcal G_q)\equiv0\pmod7. \] These six conditions are closed under both transitions of Remark~\ref{rem:periodic-weight-propagation}. Hence the same strong-induction argument as in Proposition~\ref{prop:presented-relation-propagation} propagates these conditions to every degree \[ d\geq56, \qquad d\equiv r\pmod{42}. \] Applying Proposition~\ref{prop:division-retained-precision}
in orientation \(q=0\), with \(m=2\), \(\alpha_1=1\), \(b=1\), and \(S=1\), gives \[ G_{r\bmod6}(T_3,T_{29}) \in\End_{\mathbf Z_7}(P_d^+), \qquad G_{r\bmod6}(T_3,T_{29})L_d^+\subseteq L_d^+. \] Moreover, \begin{equation} H\bigl(G_{r\bmod6}(T_3,T_{29})\bigr)P_d^+ \subseteq7P_d^+, \qquad H\bigl(G_{r\bmod6}(T_3,T_{29})\bigr)L_d^+ \subseteq7L_d^+. \label{eq:p7-G-relations} \end{equation} The remaining degrees \[ 0\leq d<56, \qquad d\equiv r\pmod{42}, \] are verified directly. Consequently, these conclusions hold in every even degree \[ d\equiv r\pmod{42}. \]

\subsection{Congruences for \texorpdfstring{\(a_3\)}{a3} and
\texorpdfstring{\(a_{29}\)}{a29}}

By the Eichler--Shimura comparison recalled in Section~\ref{sec:target-lattice}, choose a primitive simultaneous eigenvector \[v_f\in L_d^+\otimes_{\mathbf Z_7}\mathcal O_f\] for the prime-to-\(7\) Hecke eigensystem of \(f\).

We first determine \(a_3(f)\) modulo \(7\).  Since \[Q_{r\bmod6}(T_3)=\frac{F_{r\bmod6}(T_3)}{49}\] is integral on \(L_d^+\), evaluation on \(v_f\) gives \[ F_{r\bmod6}(a_3(f))\in49\mathcal O_f. \] Reducing modulo the maximal ideal and using \eqref{eq:p7-tangent-factorizations}, there is a unique \[ c\in\mathcal C_{r\bmod6} \] such that \[ a_3(f)\equiv c\pmod{\mathfrak m_{\mathcal O_f}}. \]

Write \(a_3(f)=c+y\). Direct calculation gives \[F_{r\bmod6}(c)\in49\mathbf Z_7,\qquad F'_{r\bmod6}(c)\in7\mathbf Z_7, \qquad \frac{F''_{r\bmod6}(c)}2\in\mathbf Z_7^\times. \] If \(0<v_7(y)<1\), then in the Taylor expansion of \(F_{r\bmod6}(c+y)\) the quadratic term has strictly smaller valuation than every other term, contradicting \[ F_{r\bmod6}(a_3(f))\in49\mathcal O_f. \] It follows that \begin{equation} a_3(f)-c\in7\mathcal O_f. \label{eq:p7-T3-residual} \end{equation} Since \(G_{r\bmod6}(T_3,T_{29})\) is also integral on \(L_d^+\), evaluation on \(v_f\) gives \[ a_{29}(f)-2 - a_3(f) \bigl(a_3(f)-\alpha_{r\bmod6}\bigr) \bigl(a_3(f)-\beta_{r\bmod6}\bigr) \in7\mathcal O_f. \] By \eqref{eq:p7-T3-residual}, one of the three factors in the cubic product lies in \(7\mathcal O_f\).  Hence \begin{equation} a_{29}(f)-2\in7\mathcal O_f. \label{eq:p7-T29-residual} \end{equation} 

We may therefore write \begin{equation} a_3(f)=c+7x_f, \qquad a_{29}(f)=2+7y_f, \qquad x_f, y_f\in\mathcal O_f. \label{eq:p7-raw-digits} \end{equation}

Put \[q_f:=Q_{r\bmod6}(a_3(f)), \qquad g_f:=G_{r\bmod6}(a_3(f),a_{29}(f)). \] By the preceding subsection, \[ q_f,g_f\in\mathcal O_f \] and \[ H(q_f),H(g_f)\in7\mathcal O_f. \] Reducing modulo the maximal ideal gives \[ \bar q_f^7=\bar q_f, \qquad \bar g_f^7=\bar g_f, \] so \[ \bar q_f,\bar g_f\in\mathbf F_7. \] 

Plugging \[ a_3 = c + 7 x_f, \qquad a_{29} = 2 + 7y_f, \qquad x_f,y_f \in \cO_f, \] into \eqref{eq:Q-G-definition} and expanding, we find that
\begin{equation}
\begin{array}{c|c|c|c}
r\bmod6 & c & \bar q_f & \bar g_f \\ \hline
0 & 0
  & 4\bar x_f^{\,2}+\bar x_f
  & \bar y_f-5\bar x_f \\[1mm]
0 & 3
  & 2\bar x_f^{\,2}+2\bar x_f+3
  & \bar y_f-4\bar x_f \\[1mm]
0 & 4
  & 2\bar x_f^{\,2}
  & \bar y_f-4\bar x_f \\ \hline
2 & 0
  & 2\bar x_f^{\,2}+2\bar x_f
  & \bar y_f-3\bar x_f \\[1mm]
2 & 2
  & \bar x_f^{\,2}+3\bar x_f+4
  & \bar y_f-\bar x_f \\[1mm]
2 & 5
  & \bar x_f^{\,2}+4\bar x_f
  & \bar y_f-\bar x_f \\ \hline
4 & 0
  & \bar x_f^{\,2}+4\bar x_f
  & \bar y_f-6\bar x_f \\[1mm]
4 & 1
  & 4\bar x_f^{\,2}+\bar x_f+2
  & \bar y_f-2\bar x_f \\[1mm]
4 & 6
  & 4\bar x_f^{\,2}+2\bar x_f+5
  & \bar y_f-2\bar x_f.
\end{array}\label{eq:qf-gf-xf-yf}
\end{equation}

Since $\bar q_f, \bar g_f\in \F_7$, we see that $\bar x_f, \bar y_f \in \F_{49}$. Thus these relations are already enough to prove the finiteness of the number of away-from-$7$ Hecke eigensystems in level $1$. 

In order to obtain classification, we have to correlate $\bar q_f$ and $\bar g_f$. For \(c\in\mathcal C_j\), let \(E_{j,c}(X)\in\mathbf Z[X]\) be the coefficientwise lift, with coefficients in \(\{0,\ldots,6\}\), of the Lagrange polynomial \[ \prod_{c'\in\mathcal C_j\setminus\{c\}} \frac{X-c'}{c-c'}\in\mathbf F_7[X]. \] The denominators are nonzero in \(\mathbf F_7\), since
the elements of \(\mathcal C_j\) are distinct modulo \(7\). Thus \[ E_{j,c}(a_3(f)) \equiv \begin{cases} 1\pmod{\mathfrak m_{\mathcal O_f}}, & a_3 (f)\equiv c\pmod{\mathfrak m_{\mathcal O_f}},\\ 0\pmod{\mathfrak m_{\mathcal O_f}}, & a_3(f) \equiv c'\pmod{\mathfrak m_{\mathcal O_f}} \text{ for some }c'\in\mathcal C_j\setminus\{c\}. \end{cases} \] 

The classification computation is performed at precision \(343\). For every even degree \[ 392\leq d<2450 \] and every orientation \(0\leq q<6\), put \[ r\equiv d+14q\pmod{42}. \] On \[ \mathbb M_d^{\epsilon_q}(\mathbf Z/343\mathbf Z), \qquad \epsilon_q=(-1)^q, \] the computation supplies polynomials \[ R_{r,c}^{(1)},R_{r,c}^{(2)}\in\mathbf Z_7[X,Y] \] for every \(c\in\mathcal C_{r\bmod6}\). We use the same notation for their reductions in \(\mathbf F_7[X,Y]\). On this source module, put
\[
\begin{aligned}
\mathcal Q_q &= ''\frac{F_{r\bmod6}(t_3)}{49}'' &:=\mathfrak D_{F_{r\bmod6}(t_3),\,49} \bigl(\mathbb M_d^{\epsilon_q}(\mathbf Z/343\mathbf Z)\bigr),\\
\mathcal G_q &= ''\frac{7G_{r\bmod6}(t_3,t_{29})}{7}'' &:=\mathfrak D_{7G_{r\bmod6}(t_3,t_{29}),\,7} \bigl(\mathbb M_d^{\epsilon_q}(\mathbf Z/343\mathbf Z)\bigr),
\end{aligned}
\]
where \(t_n\) acts as \(\chi_3(n)^qT_n\).
Thus both relations are now taken at precision \(343\).
Let
\[
S_{c,q}:=E_{r\bmod6,c}\bigl(\chi_3(3)^qT_3\bigr)^6.
\]
For \(\nu\in\{1,2\}\), the computation verifies \[ \bigl[(R_{r,c}^{(\nu)})^3\bigr] (\mathcal Q_q,\mathcal G_q)\Gamma_{S_{c,q}} \equiv0\pmod7.\] Here the polynomial is cubed and expanded before evaluating it at the linear relations.

These conditions form a periodic family closed under both transitions of Remark~\ref{rem:periodic-weight-propagation}. Indeed, the \(A_3\)-transition preserves \(r\), while the \(B_3\)-transition sends \[ (d,q)\longmapsto(d-392,q+1) \] and preserves \(r\) modulo \(42\), since \[ -392+14=-378\equiv0\pmod{42}. \] Consequently, Proposition~\ref{prop:presented-relation-propagation}, followed in orientation \(q=0\) by Proposition~\ref{prop:division-retained-precision} with \(m=3\), division exponents \(2,1\), \(b=1\), and \(S=E_{r\bmod6,c}(T_3)^6\), gives, for every even degree \(d\geq392\) with \(d\equiv r\pmod{42}\), \[ E_{r\bmod6,c}(T_3)^6 \left[ R_{r,c}^{(\nu)}\!\left( Q_{r\bmod6}(T_3), G_{r\bmod6}(T_3,T_{29}) \right) \right]^3L_d^+ \subseteq7L_d^+. \] The remaining even degrees \(0\leq d<392\) are verified directly.

We illustrate the argument on the zero branch \(c = 0\) in coefficient-degree residue \(r=0\). Since \(E_{0,0}(a_3(f))\) is a unit, it may be cancelled after evaluating the certified operator relations on \(v_f\).  After reduction modulo \(\mathfrak m_{\mathcal O_f}\), the resulting joint relations give \[ \bar q_f (\bar q_f-3)(\bar q_f-4)(\bar q_f-5)=0, \qquad \bar g_f=2\bar q_f. \] Now using the expressions for \(\bar q_f\) and \(\bar g_f\) in terms of \(\bar x_f\) and \(\bar y_f\) given in
\eqref{eq:qf-gf-xf-yf}, we obtain \[ (\bar x_f,\bar y_f) \in \{ (0,0),(1,1),(2,4),(3,2), (4,2),(5,4),(6,1) \}. \] In particular, all possible values of \(\bar x_f\) and \(\bar y_f\) belong to \(\mathbf F_7\), even when the residue field of \(\mathcal O_f\) is larger.  Therefore, on this branch, \[ \bigl(a_3(f),a_{29}(f)\bigr) \equiv_{\mathrm{val}} \bigl(7x,\,2+7y\bigr)\pmod{49} \] for exactly the seven pairs \((x,y)\) listed above.

The computation verifies the analogous calculation for every even residue \(r\bmod42\) with \(c=0\).  In each case, the common zero set of the two coordinate relations pulls back under \eqref{eq:qf-gf-xf-yf} to exactly seven pairs \[ (\bar x_f,\bar y_f)\in\mathbf F_7^2. \] These pairs are enough for exact classification on the zero branch.

For the nonzero branches (i.e. \(a_3 \equiv_{\mathrm{lit}}c \pmod{7}\), \(c \neq 0\)), the relations \[ E_{r\bmod6,c}(T_3)^6 \left[ R_{r,c}^{(\nu)}\!\left( Q_{r\bmod6}(T_3), G_{r\bmod6}(T_3,T_{29}) \right) \right]^3L_d^+ \subseteq7L_d^+ \] still determine the possible pairs \((\bar q_f,\bar g_f)\), but they do not by themselves determine the correct \((\bar x_f,\bar y_f)\) corresponding to strong eigenforms. They define a set \[\mathcal T_{r,c}:= \left\{ (q,g)\in\mathbf F_7^2: R_{r,c}^{(1)}(q,g)=R_{r,c}^{(2)}(q,g)=0 \right\}. \]

For every nonzero \(c\in\mathcal C_{r\bmod6}\), put formally \[ U_c:=\frac{T_3-c}{7}, \qquad V:=\frac{T_{29}-2}{7}. \] Here division is interpreted through the presented linear relations \(\mathfrak D_{T,S}\); we do not require \(U_c\) or \(V\) separately to preserve \(L_d^+\). 

On the same exact induction base and in the same six orientations as above, the computation verifies compatible congruences of presented linear relations, which propagate by Proposition~\ref{prop:presented-relation-propagation} to every even degree \[ d\geq392, \qquad d\equiv r\pmod{42}, \] and give \[ E_{r\bmod6,c}(T_3)^6 \left[E_{r\bmod6,c}(T_3)^6 U_c -\phi_{r,c}\!\left( Q_{r\bmod6}(T_3), G_{r\bmod6}(T_3,T_{29}) \right) \right]^3 L_d^+ \subseteq 7L_d^+ \] and \[E_{r\bmod6,c}(T_3)^6 \left[ E_{r\bmod6,c}(T_3)^6 V -\psi_{r,c}\!\left( Q_{r\bmod6}(T_3), G_{r\bmod6}(T_3,T_{29}) \right) \right]^3 L_d^+ \subseteq 7L_d^+,\] for some affine polynomials \[ \phi_{r,c},\psi_{r,c}\in\mathbf Z_7[X,Y]. \] We use the same notation for their reductions in \(\mathbf F_7[X,Y]\). The remaining even degrees \(0\leq d<392\) are verified directly.

To justify these lattice inclusions, put \(S_c:=E_{r\bmod6,c}(T_3)^6\). For \(N=T_3-c\) or \(T_{29}-2\), the first division equation for the monomial \((S_cN/7)^3\), starting at \(S_cu\), gives \[ S_c^2N P_d^+\subseteq7P_d^+. \] By the integrality of \(Q_{r\bmod6}(T_3)\) and \eqref{eq:p7-tangent-factorizations}, \(S_c\) acts on \(P_d^+/7P_d^+\) invertibly on the generalized \(c\)-eigenspace and as zero on the other generalized eigenspaces. Hence \(\ker S_c^2=\ker S_c\), so \(S_cN P_d^+\subseteq7P_d^+\). Thus \(S_cU_c\) and \(S_cV\) preserve \(P_d^+\), and also \(L_d^+\) by saturation. Proposition~\ref{prop:division-retained-precision}, applied with \(m=3\), division exponents \(2,1,1\), \(b=1\), and \(S=S_c\), now gives the stated inclusions.

After evaluation on \(v_f\), put \(e_f:=E_{r\bmod6,c}(a_3(f))\). Since \(e_f\equiv1\pmod{\mathfrak m_{\mathcal O_f}}\), we may cancel the outer unit factor \(e_f^6\). Reducing modulo \(\mathfrak m_{\mathcal O_f}\), the inner factors \(e_f^6\) also reduce to \(1\), giving \[ \bar x_f=\phi_{r,c}(\bar q_f,\bar g_f), \qquad \bar y_f=\psi_{r,c}(\bar q_f,\bar g_f). \] Thus every point of \(\mathcal T_{r,c}\) determines exactly one permitted pair \((\bar x_f,\bar y_f)\). 

For example, take \(r=40\) and \(c=1\). The coordinate relations give \[ \mathcal T_{40,1} = \{(0,1),(2,0),(2,3),(6,6)\}. \] The corresponding graph relations reduce to \[ \bar x_f=3\bar g_f+5, \qquad \bar y_f=3. \] Evaluating these formulas on the four points of \(\mathcal T_{40,1}\) gives \[(\bar x_f,\bar y_f) \in \{(0,3),(1,3),(2,3),(5,3)\}. \] Consequently, on this branch, \[ \bigl(a_3(f),a_{29}(f)\bigr) \equiv_{\mathrm{val}} \bigl(1+7x,\,2+7y\bigr) \pmod{49} \] for precisely these four pairs \((x,y)\).

The other nonzero branch is treated in the same way. In every coefficient-degree residue, the centre-zero branch contributes seven signatures and each of the two nonzero branches contributes four, giving fifteen signatures in total. All of these signatures have coordinates in \(\mathbf Z/49\mathbf Z\).

\subsection{Final classification}

\begin{proof}[Proof of Theorem~\ref{thm:intro-p7}]

By the preceding subsections, every normalized cuspidal level-one eigenform \(f\) determines a unique signature \[ \sigma=(r,c,x,y), \qquad r\equiv k-2\pmod{42}, \qquad c\in\mathcal C_{r\bmod6}, \qquad x,y\in\mathbf Z/7\mathbf Z, \] such that \[ a_3(f)\equiv_{\mathrm{val}}c+7x\pmod{49}, \qquad a_{29}(f)\equiv_{\mathrm{val}}2+7y\pmod{49}. \] For each residue \(r\bmod42\), there are exactly \(15\) such signatures, and hence \(315\) signatures altogether.

The finite computation verifies that every one of these \(315\) signatures is realized by a normalized cuspidal level-one eigenform \(g_\sigma\) of weight \(k_\sigma\leq380\), with \[ k_\sigma-2\equiv r\pmod{42}. \]

If \(f\) has signature \(\sigma\), then \[ a_3(f)\equiv_{\mathrm{val}}a_3(g_\sigma)\pmod{49}, \qquad a_{29}(f)\equiv_{\mathrm{val}}a_{29}(g_\sigma)\pmod{49}, \] and \[ k_\sigma\equiv k\pmod{42}. \] Since \(3\) has order \(42\) modulo \(49\), the latter congruence gives \[ 3^{k_\sigma}\equiv3^k\pmod{49}, \] so \(f\) and \(g_\sigma\) also have the same value of the weight operator \([3]\) modulo \(49\).

By Section~\ref{sec:big-hecke-p7}, \[\mathbb T_{\mathrm{dc}}^{(7)} = \overline{\mathbf Z_7[T_3,T_{29},[3]]}. \] Hence the prime-to-\(7\) Hecke eigensystems of \(f\) and \(g_\sigma\) are congruent modulo \(49\) in the valuative sense. Therefore \[ a_n(f)\equiv_{\mathrm{val}}a_n(g_\sigma)\pmod{49} \qquad ((n,7)=1). \] Consequently, only finitely many prime-to-\(7\) Hecke eigensystems occur modulo \(49\), and every such eigensystem is realized in weight at most \(380\).  In fact, the argument gives exactly \(315\) eigensystems.

Reducing the signatures modulo \(7\), direct calculation identifies their values at \(T_3,T_{29}\) with those of the Eisenstein twists corresponding to the pairs in Theorem~\ref{thm:intro-p7}. For each signature, the pair \(\{a,b\}\) is unique. Choose \(i\geq2\) and even \(\kappa\geq4\) with \[ \{i,i+\kappa-1\}=\{a,b\} \quad\text{in }\mathbf Z/6\mathbf Z, \qquad \kappa+2i\equiv k\pmod{42}. \] By Lemma~\ref{lem:weak-cyclotomic} and the completed-Hecke generation above, agreement at \(T_3,T_{29}\) and \([3]\) gives \[ \theta^{42}f\equiv_{\mathrm{lit}} \theta^iG_\kappa\pmod7. \] By Lemma~\ref{lem:modular-correction}, \[ \delta_f:= \overline{\frac{\theta^{42}f-\theta^iG_\kappa}{7}} \] is a modular form satisfying \[ \theta^{42}f\equiv_{\mathrm{val}} \theta^iG_\kappa+7\delta_f\pmod{49}. \] For fixed \(i,\kappa\), completed-Hecke generation shows that its \(q\)-expansion is determined by the signature and is defined over \(\mathbf F_7\), since all signature values are rational integers. The reductions of the \(315\) signatures cover all nine listed pairs, so their strong realizations also realize every listed Eisenstein twist modulo \(7\). This proves Theorem~\ref{thm:intro-p7}.
\end{proof}

\part{Applications}

\section{Strong prime-to-\texorpdfstring{\(p\)}{p} weight bounds}
\label{sec:strong-weight-bounds}

Throughout this section, congruences are understood in the valuative sense.

In \cite[\S1.5]{KRW2016}, the finiteness conjecture for strong eigenforms modulo \(p^m\) is reformulated in terms of a strong weight bound: for fixed \(N,p,m\), one asks for a constant \(B(N,p,m)\) such that every strong eigenform of level \(N\) is congruent modulo \(p^m\) to a strong eigenform of the same level and of weight at most \(B(N,p,m)\).

The classifications proved above concern only the prime-to-\(p\) Hecke eigensystem. We therefore introduce the corresponding prime-to-\(p\) notion. A number \(B\) is called a \emph{strong prime-to-\(p\) weight bound} for \((N,p,m)\) if every normalized characteristic-zero cuspidal eigenform \(f\) of level \(N\) is congruent modulo \(p^m\), at all Hecke operators prime to \(p\), to a normalized characteristic-zero cuspidal eigenform \(g\) of level \(N\) and weight at most \(B\). We denote the least such bound, when it exists, by \[ B^{(p)}(N,p,m). \] 

It is also natural to allow cyclotomic twists. A number \(B\) is called a \emph{strong weight bound up to \(\theta\)-twist} for \((N,p,m)\) if, for every normalized characteristic-zero cuspidal eigenform \(f\) of level \(N\), there exist an integer \(i\geq0\) and a normalized characteristic-zero cuspidal eigenform \(g\) of level \(N\) and weight at most \(B\) such that \[ a_n(f)\equiv n^i a_n(g)\pmod{p^m} \qquad ((n,p)=1). \] We denote the least such bound, when it exists, by \[ B_\theta(N,p,m). \] Equivalently, the prime-to-\(p\) eigensystem of \(f\) is congruent modulo \(p^m\) to the \(\theta^i\)-twist of that of \(g\).

\begin{table}[ht]
\centering
\begin{tabular}{c|r|r}
\hline
modulus
&
strong weight bound \(B\)
&
bound up to \(\theta\)-twist \(B_\theta\)
\\
\hline
\(2\)       & \(12\)  & \(12\) \\
\(2^2\)     & \(12\)  & \(12\) \\
\(2^3\)     & \(12\)  & \(12\) \\
\(2^4\)     & \(18\)  & \(12\) \\
\(2^5\)     & \(22\)  & \(16\) \\
\(2^6\)     & \(30\)  & \(24\) \\
\(2^7\)     & \(46\)  & \(32\) \\
\(2^8\)     & \(90\)  & \(62\) \\
\(2^9\)     & \(158^{\dagger}\) & \(128^{\dagger}\) \\
\hline
\(3\)       & \(12\)  & \(12\) \\
\(3^2\)     & \(32\)  & \(16\) \\
\(3^3\)     & \(70\)  & \(24\) \\
\(3^4\)     & \(214\) & \(58\) \\
\(3^5\)     & \(646^{\dagger}\) & \(166^{\dagger}\) \\
\hline
\(5\)       & \(30\)  & \(12\) \\
\(5^2\)     & \(142\) & \(24\) \\
\(5^3\)     & \(598\) & \(106\) \\
\hline
\(7\)       & \(56\)  & \(16\) \\
\(7^2\)     & \(380\) & \(78\) \\
\hline
\end{tabular}
\caption{Strong level-one weight bounds for prime-power away-from-\(p\) congruences. The bound \(B_\theta\) means that every strong eigensystem is congruent away from \(p\) to \(\theta^i g\) for some \(i\geq0\) and some strong level-one eigenform \(g\) of weight at most \(B_\theta\). The entries marked \(\dagger\) are conjectural.}\label{tab:strong-weight-bounds}
\end{table}

The proved and conjectural values in Table~\ref{tab:strong-weight-bounds} display some suggestive patterns. For \(p=3,5,7\), the proved values at \(m=1,2\) satisfy \[ B^{(p)}(1,p,m)=(p+2)\varphi(p^m)+2. \] At the higher exponents, a different uniform pattern appears: the values for \[ (p,m)=(3,3),(3,4),(3,5),(5,3) \] satisfy \[ B^{(p)}(1,p,m)=(p+1)\varphi(p^m)-2. \] The data for \(p=2\) appear less regular, although the bounds remain of the same order of magnitude as \(\varphi(2^m)\). The bounds up to \(\theta\)-twist likewise do not yet suggest a uniform formula valid for all primes and exponents. It seems premature to conjecture an exact general formula from these data, though it seems more reasonble to ask the following.

\paragraph{Question.} Does there exist, for every prime \(p\), a constant \(C_p>0\) such that \[B^{(p)}(1,p,m)\leq C_p\varphi(p^m) \] for every \(m\geq1\)?

\clearpage
\appendix

\section{Signatures modulo \texorpdfstring{\(81\)}{81}}
\label{app:p3-signatures}

For each even weight residue \(k\bmod54\), the permitted signatures are listed below. The entries \(\alpha,\beta\) represent the congruences \[ a_2(f)\equiv_{\mathrm{val}}\alpha\pmod{81}, \qquad a_7(f)\equiv_{\mathrm{val}}\beta\pmod{81}. \] Choose \(\omega\in\overline{\mathbf Z}_3\) with \(\omega^2=2\), and put \[ \mathcal E:= \{0,18,27,36,45,54,63,\, 9\pm27\omega,\,72\pm27\omega\}. \] Both signs are included, so \(\mathcal E\) contains eleven values. In every row, \(\beta\equiv1+7^{k-1}\pmod{81}\).

\begingroup
\small
\renewcommand{\arraystretch}{1.1}
\begin{longtable}{@{}clc@{}}
\caption{The \(159\) permitted signatures modulo \(81\).}
\label{tab:p3-signatures}\\
\toprule
\(k\bmod54\) & Permitted values \(\alpha\) & \(\beta\)\\
\midrule
\endfirsthead
\multicolumn{3}{c}{\tablename\ \thetable\ (continued)}\\
\toprule
\(k\bmod54\) & Permitted values \(\alpha\) & \(\beta\)\\
\midrule
\endhead
\midrule
\multicolumn{3}{r}{Continued on next page}\\
\endfoot
\bottomrule
\endlastfoot
0  & \(12,39,42,69\)                  & \(59\)\\
2  & \(3,24,57,78\)                   & \(8\)\\
4  & \(0,9,18,27,36,45,54,63,72\)     & \(20\)\\
6  & \(6,33,48,75\)                   & \(41\)\\
8  & \(6,33,48,75\)                   & \(17\)\\
10 & \(\mathcal E\)                   & \(56\)\\
12 & \(3,24,57,78\)                   & \(23\)\\
14 & \(12,39,42,69\)                  & \(26\)\\
16 & \(0,9,18,27,36,45,54,63,72\)     & \(11\)\\
18 & \(15,39,42,66\)                  & \(5\)\\
20 & \(24,30,51,57\)                  & \(35\)\\
22 & \(0,9,18,27,36,45,54,63,72\)     & \(47\)\\
24 & \(6,21,60,75\)                   & \(68\)\\
26 & \(21,33,48,60\)                  & \(44\)\\
28 & \(\mathcal E\)                   & \(2\)\\
30 & \(3,30,51,78\)                   & \(50\)\\
32 & \(12,15,66,69\)                  & \(53\)\\
34 & \(0,9,18,27,36,45,54,63,72\)     & \(38\)\\
36 & \(12,15,66,69\)                  & \(32\)\\
38 & \(3,30,51,78\)                   & \(62\)\\
40 & \(0,9,18,27,36,45,54,63,72\)     & \(74\)\\
42 & \(21,33,48,60\)                  & \(14\)\\
44 & \(6,21,60,75\)                   & \(71\)\\
46 & \(\mathcal E\)                   & \(29\)\\
48 & \(24,30,51,57\)                  & \(77\)\\
50 & \(15,39,42,66\)                  & \(80\)\\
52 & \(0,9,18,27,36,45,54,63,72\)     & \(65\)\\
\end{longtable}
\endgroup

\clearpage

\section{Signatures modulo \texorpdfstring{\(49\)}{49}}
\label{app:p7-signatures}

For each even weight residue \(k\bmod42\), the permitted signatures are listed below. The entries \(c,x,y\) are taken in \(\{0,\ldots,6\}\), and represent the congruences \[ a_3(f)\equiv_{\mathrm{val}}c+7x\pmod{49}, \qquad a_{29}(f)\equiv_{\mathrm{val}}2+7y\pmod{49}. \]

\begingroup
\small
\renewcommand{\arraystretch}{1.1}
\begin{longtable}{@{}ccl@{}}
\caption{The \(315\) permitted signatures modulo \(49\).}
\label{tab:p7-signatures}\\
\toprule
\(k\bmod42\) & \(c\) & Permitted pairs \((x,y)\)\\
\midrule
\endfirsthead
\multicolumn{3}{c}{\tablename\ \thetable\ (continued)}\\
\toprule
\(k\bmod42\) & \(c\) & Permitted pairs \((x,y)\)\\
\midrule
\endhead
\midrule
\multicolumn{3}{r}{Continued on the next page}\\
\endfoot
\bottomrule
\endlastfoot
0 & 0 & $\{(0,6), (1,1), (2,0), (3,3), (4,3), (5,0), (6,1)\}$ \\*
 & 1 & $\{(0,3), (1,3), (2,3), (5,3)\}$ \\*
 & 6 & $\{(1,3), (4,3), (5,3), (6,3)\}$ \\
\addlinespace
2 & 0 & $\{(0,0), (1,1), (2,4), (3,2), (4,2), (5,4), (6,1)\}$ \\*
 & 3 & $\{(0,4), (1,4), (3,4), (6,4)\}$ \\*
 & 4 & $\{(0,4), (3,4), (5,4), (6,4)\}$ \\
\addlinespace
4 & 0 & $\{(0,4), (1,1), (2,6), (3,5), (4,5), (5,6), (6,1)\}$ \\*
 & 2 & $\{(0,5), (3,5), (5,5), (6,5)\}$ \\*
 & 5 & $\{(0,5), (1,5), (3,5), (6,5)\}$ \\
\addlinespace
6 & 0 & $\{(0,4), (1,6), (2,5), (3,1), (4,1), (5,5), (6,6)\}$ \\*
 & 1 & $\{(0,6), (1,6), (4,6), (6,6)\}$ \\*
 & 6 & $\{(0,6), (2,6), (5,6), (6,6)\}$ \\
\addlinespace
8 & 0 & $\{(0,0), (1,1), (2,4), (3,2), (4,2), (5,4), (6,1)\}$ \\*
 & 3 & $\{(0,0), (2,0), (5,0), (6,0)\}$ \\*
 & 4 & $\{(0,0), (1,0), (4,0), (6,0)\}$ \\
\addlinespace
10 & 0 & $\{(0,6), (1,3), (2,1), (3,0), (4,0), (5,1), (6,3)\}$ \\*
 & 2 & $\{(1,1), (2,1), (3,1), (6,1)\}$ \\*
 & 5 & $\{(0,1), (3,1), (4,1), (5,1)\}$ \\
\addlinespace
12 & 0 & $\{(0,1), (1,3), (2,2), (3,5), (4,5), (5,2), (6,3)\}$ \\*
 & 1 & $\{(0,2), (1,2), (2,2), (5,2)\}$ \\*
 & 6 & $\{(1,2), (4,2), (5,2), (6,2)\}$ \\
\addlinespace
14 & 0 & $\{(0,6), (1,0), (2,3), (3,1), (4,1), (5,3), (6,0)\}$ \\*
 & 3 & $\{(0,3), (3,3), (4,3), (5,3)\}$ \\*
 & 4 & $\{(1,3), (2,3), (3,3), (6,3)\}$ \\
\addlinespace
16 & 0 & $\{(0,0), (1,4), (2,2), (3,1), (4,1), (5,2), (6,4)\}$ \\*
 & 2 & $\{(1,4), (2,4), (3,4), (6,4)\}$ \\*
 & 5 & $\{(0,4), (3,4), (4,4), (5,4)\}$ \\
\addlinespace
18 & 0 & $\{(0,4), (1,6), (2,5), (3,1), (4,1), (5,5), (6,6)\}$ \\*
 & 1 & $\{(1,5), (3,5), (4,5), (5,5)\}$ \\*
 & 6 & $\{(1,5), (2,5), (3,5), (5,5)\}$ \\
\addlinespace
20 & 0 & $\{(0,4), (1,5), (2,1), (3,6), (4,6), (5,1), (6,5)\}$ \\*
 & 3 & $\{(0,6), (1,6), (2,6), (4,6)\}$ \\*
 & 4 & $\{(2,6), (4,6), (5,6), (6,6)\}$ \\
\addlinespace
22 & 0 & $\{(0,0), (1,4), (2,2), (3,1), (4,1), (5,2), (6,4)\}$ \\*
 & 2 & $\{(0,0), (3,0), (5,0), (6,0)\}$ \\*
 & 5 & $\{(0,0), (1,0), (3,0), (6,0)\}$ \\
\addlinespace
24 & 0 & $\{(0,6), (1,1), (2,0), (3,3), (4,3), (5,0), (6,1)\}$ \\*
 & 1 & $\{(1,1), (2,1), (3,1), (6,1)\}$ \\*
 & 6 & $\{(0,1), (3,1), (4,1), (5,1)\}$ \\
\addlinespace
26 & 0 & $\{(0,1), (1,2), (2,5), (3,3), (4,3), (5,5), (6,2)\}$ \\*
 & 3 & $\{(0,2), (3,2), (4,2), (5,2)\}$ \\*
 & 4 & $\{(1,2), (2,2), (3,2), (6,2)\}$ \\
\addlinespace
28 & 0 & $\{(0,6), (1,3), (2,1), (3,0), (4,0), (5,1), (6,3)\}$ \\*
 & 2 & $\{(0,3), (1,3), (4,3), (6,3)\}$ \\*
 & 5 & $\{(0,3), (2,3), (5,3), (6,3)\}$ \\
\addlinespace
30 & 0 & $\{(0,0), (1,2), (2,1), (3,4), (4,4), (5,1), (6,2)\}$ \\*
 & 1 & $\{(1,4), (2,4), (3,4), (6,4)\}$ \\*
 & 6 & $\{(0,4), (3,4), (4,4), (5,4)\}$ \\
\addlinespace
32 & 0 & $\{(0,4), (1,5), (2,1), (3,6), (4,6), (5,1), (6,5)\}$ \\*
 & 3 & $\{(0,5), (2,5), (5,5), (6,5)\}$ \\*
 & 4 & $\{(0,5), (1,5), (4,5), (6,5)\}$ \\
\addlinespace
34 & 0 & $\{(0,4), (1,1), (2,6), (3,5), (4,5), (5,6), (6,1)\}$ \\*
 & 2 & $\{(2,6), (4,6), (5,6), (6,6)\}$ \\*
 & 5 & $\{(0,6), (1,6), (2,6), (4,6)\}$ \\
\addlinespace
36 & 0 & $\{(0,0), (1,2), (2,1), (3,4), (4,4), (5,1), (6,2)\}$ \\*
 & 1 & $\{(1,0), (3,0), (4,0), (5,0)\}$ \\*
 & 6 & $\{(1,0), (2,0), (3,0), (5,0)\}$ \\
\addlinespace
38 & 0 & $\{(0,6), (1,0), (2,3), (3,1), (4,1), (5,3), (6,0)\}$ \\*
 & 3 & $\{(0,1), (1,1), (3,1), (6,1)\}$ \\*
 & 4 & $\{(0,1), (3,1), (5,1), (6,1)\}$ \\
\addlinespace
40 & 0 & $\{(0,1), (1,5), (2,3), (3,2), (4,2), (5,3), (6,5)\}$ \\*
 & 2 & $\{(0,2), (1,2), (4,2), (6,2)\}$ \\*
 & 5 & $\{(0,2), (2,2), (5,2), (6,2)\}$ \\
\end{longtable}
\endgroup

\end{document}